\title{Existence and uniqueness for a class of fractional drift-diffusion equations}
\author{Thomas Hudson and Matthaeus Ragg}
\date{February 2025}
\numberwithin{equation}{section}
\def\d{\,\mathrm{d}}
\def\R{\mathbb{R}}
\def\T{\mathbb{T}}
\def\Z{\mathbb{Z}}
\def\e{\mathrm{e}}
\newcommand*{\pd}[3][\!\!\;]{\frac{\partial^{#1} #2}{\partial #3^{#1}}}
\newcommand*{\deriv}[3][\!\!\;]{\frac{\mathrm{d}^{#1} #2}{\mathrm{d} #3^{#1}}}
\newcommand*{\flap}[1]{(-\Delta)^{{#1}}}
\def\F{\mathcal{F}}
\def\mspan{\mathrm{span}}
\def\Z{\mathbb{Z}}
\def\C{\mathbb{C}}
\newcommand*{\sobsp}[2][]{\mathcal{H}_{#1}^{#2}}
\newcommand*{\inprod}[3][L^2]{\left\langle #2 , #3 \right\rangle_{#1}}
\newcommand*{\norm}[2][L^2]{\left\| #2 \right\|_{#1}}
\def\*#1{\mathbf{#1}}
\def\ft#1{\hat{#1}}
\def\longft#1{\mathcal{F}\left[#1\right]}
\def\Xint#1{\mathchoice
{\XXint\displaystyle\textstyle{#1}}%
{\XXint\textstyle\scriptstyle{#1}}%
{\XXint\scriptstyle\scriptscriptstyle{#1}}%
{\XXint\scriptscriptstyle\scriptscriptstyle{#1}}%
\!\int}
\def\XXint#1#2#3{{\setbox0=\hbox{$#1{#2#3}{\int}$ }
\vcenter{\hbox{$#2#3$ }}\kern-.6\wd0}}
\def\dashint{\Xint-}
\def\e{\mathrm{e}}
\DeclareMathOperator*{\esssup}{ess\,sup}
\newtheorem{theorem}{Theorem}[section]
\newtheorem{lemma}[theorem]{Lemma}
\newtheorem{proposition}[theorem]{Proposition}
\theoremstyle{definition}
\newtheorem{definition}[theorem]{Definition}
\newtheorem{remark}[theorem]{Remark}
\begin{document}

\maketitle
\tableofcontents
\begin{abstract}
    This work establishes the existence and uniqueness of solutions to the fractional diffusion equation 
    \begin{equation*}
        \pd[\alpha]{u}{t} + K\flap{\beta} u - \nabla \cdot (\nabla V u) = f
    \end{equation*}
    on a $d$-dimensional torus, subject to sufficient conditions on the input parameters.
    The focus is on fractional orders $\alpha$ and $\beta$ less than 1.
    The strategy uses a Galerkin method and focuses on the additional complexity that comes from the fractional-order derivatives.
    Additional Sobolev regularity of the solution is shown.
    The spectral approach to the existence proof suggests an algorithm to compute explicit solutions numerically,
    and the regularity results are used to support a rigorous convergence analysis of the proposed numerical scheme.
\end{abstract}

\newpage
\section{Introduction}

This paper studies the well-posedness of the fractional partial differential equation (FPDE)
\begin{equation}\label{eq:original_pde}
    \pd[\alpha]{u}{t}+K(-\Delta)^{\beta}u - \nabla \cdot\big(\nabla V u\big)=0
\end{equation}
for $\alpha,\beta\in(0,1]$. In the case where $\alpha=\beta=1$, this is precisely the Fokker--Planck equation for a drift-diffusion process driven by Brownian motion. Here, we go beyond this classic setting, considering a generalisation to cases in which the equation contains fractional derivatives in both time and space. In particular, the spatial and temporal fractional derivatives involved in the equation above are taken to be of slightly different characters: we will explain more precisely what each of these derivatives mean in detail below.

To simplify issues with boundary conditions, we consider the equation in a periodic setting, so that $u:\T^d\times[0,T]\to\R$ with $\T = \R/(2\pi\Z)$, a periodic domain of length $2\pi$. The function $V : \T^d \to \R$ is a potential which we will assume to be sufficiently regular.

This FPDE involves both a Caputo fractional time derivative of order $\alpha$ and a fractional power $\beta$ of the Laplacian. These can be thought of as modifications to a first-order derivative and a Laplacian, respectively; precise definitions are given in \cref{sec:prelims}.
The focus of this paper is to establish existence, uniqueness and regularity results for the solution of a more general version of \eqref{eq:original_pde}, as well as the convergence of a corresponding numerical scheme.

If $\alpha = \beta = 1$, then the Caputo fractional derivative in time corresponds to the standard integer-order derivative, and the fractional Laplacian becomes a standard Laplacian. As such, we see that the family of equations considered generalises the usual integer-order drift-diffusion equation, where the first two terms model the diffusion of a particle or the heat flow over time.
The Caputo fractional derivative encompasses a memory property in the dynamics: when $\alpha<1$, the evolution of the solution over time depends globally on all previous states (see \cite{podlubny1999}). Such memory properties occur when the diffusive process exhibits time correlations.

On the other hand, the fractional Laplacian of order $\beta$ models a particle diffusing according to a $2\beta$-stable L\'evy process instead of Brownian motion (compare \cite{lischke2020}). In this case, the particle is assumed to make infinitesimal diffusive jumps sampled from a suitably rescaled version of the probability distribution shown in \cref{fig:Levy_process}.
\begin{figure}[h]
    \centering
    \includegraphics[width=.8\linewidth]{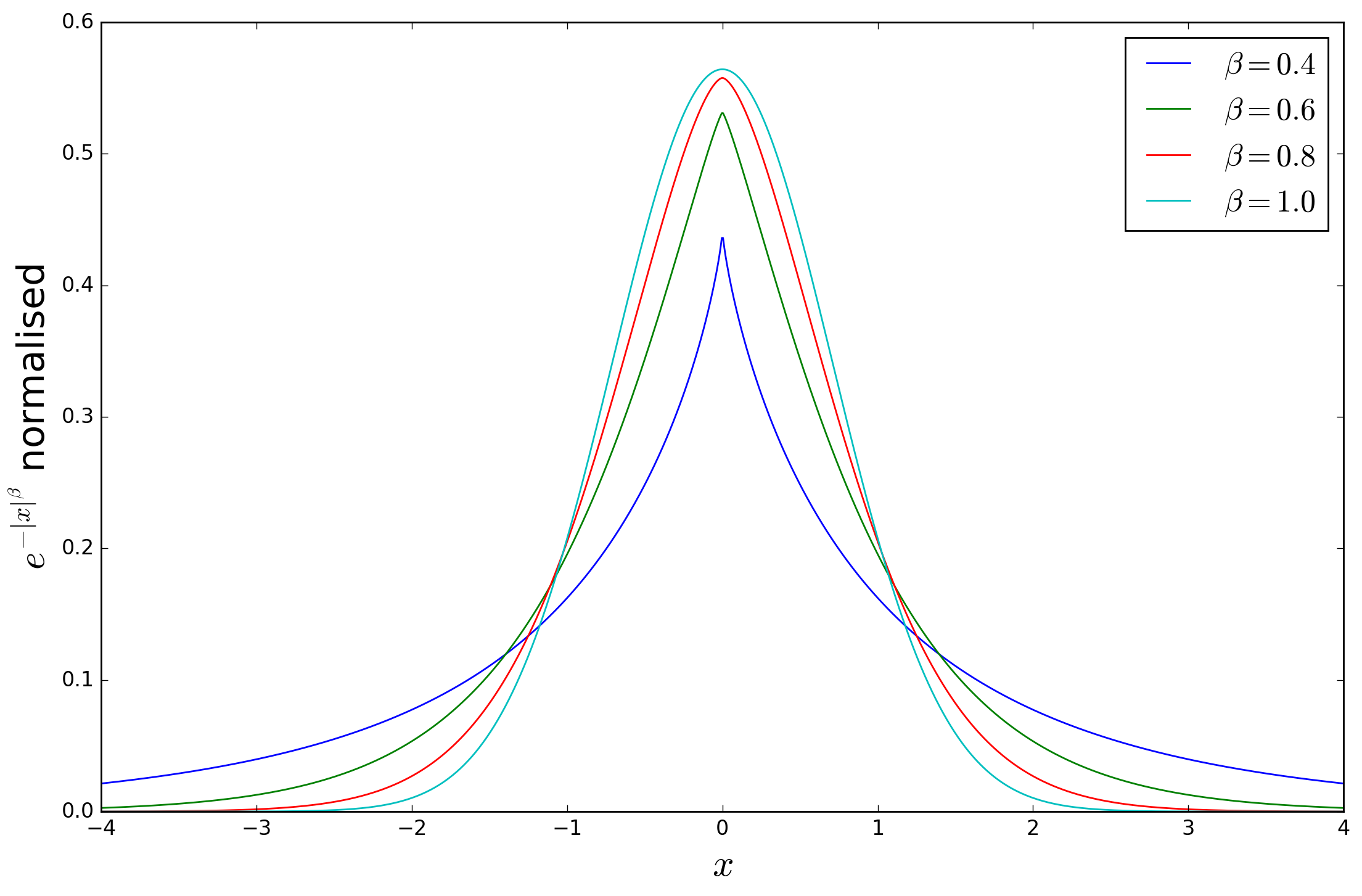}
    \caption{Distributions underlying the $2\beta$-stable L\'evy process. The case where $\beta=1$ corresponds to a normal distribution.}
    \label{fig:Levy_process}
\end{figure}

\begin{figure}
    \centering
    \includegraphics[width=.8\linewidth]{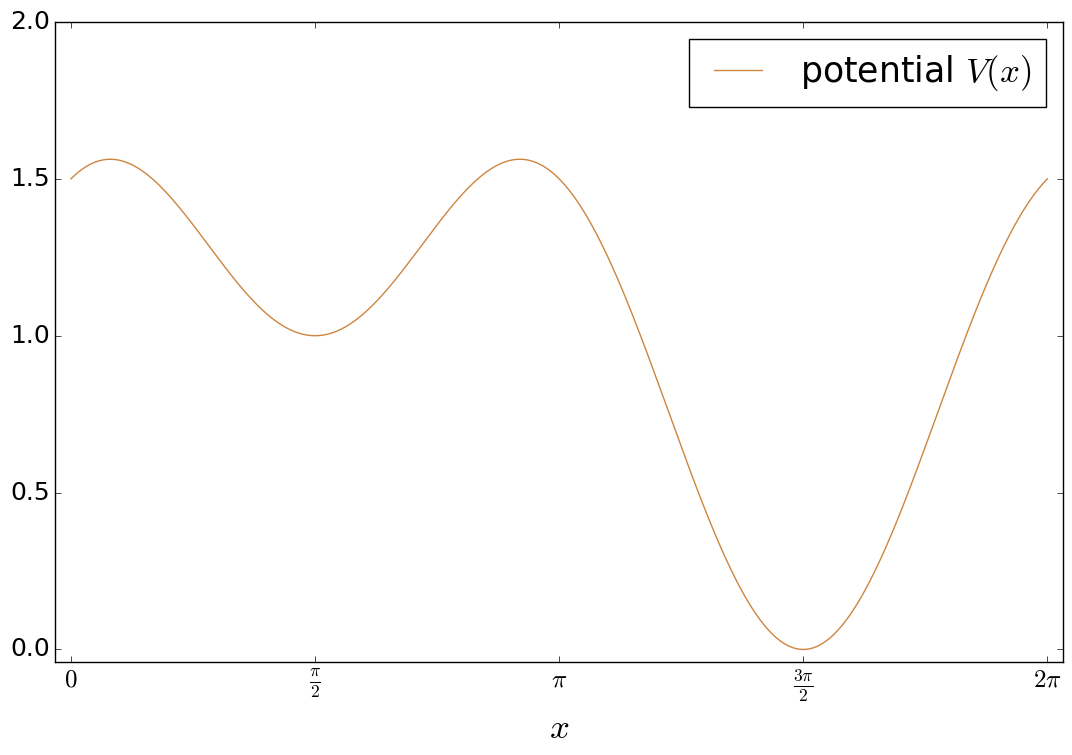}
    \caption{An example of a smooth potential $V$. The example was used to compute the numerical solutions shown in the figures which follow.}
    \label{fig:potential}
\end{figure}

The drift term in \eqref{eq:original_pde} involves a potential $V$ that the particle tries to minimise.
To gain some intuition about the action of this term, consider the one-dimensional potential shown in \cref{fig:potential}. We expect that, as $t \to \infty$, a particle prefers to sit inside the wells of the potential around $\frac\pi2$ and $\frac{3\pi}2$; indeed, if $K=0$ in \eqref{eq:original_pde}, then solutions to the resulting transport equation would concentrate around these points.

The distribution functions \cref{fig:Levy_process} suggests that for smaller $\beta$, the particle is more likely to make large jumps, as the tails of the distribution are heavier. In turn, this suggests that the particle may be more likely to escape from the minima of the potential $V$ and escape into a higher-energy metastable states. It turns out this intuition is indeed borne out by numerical simulation: in \cref{fig:final_time_sol}, we illustrate how the diffusion process affects the stable-state solution $u_\infty$. \cref{fig:3d_sol} shows the evolution of the solution from an initial condition, assuming the particle is initially uniformly distributed close to the less favourable local minimum.
\begin{figure}
    \centering
    \includegraphics[width=.8\linewidth]{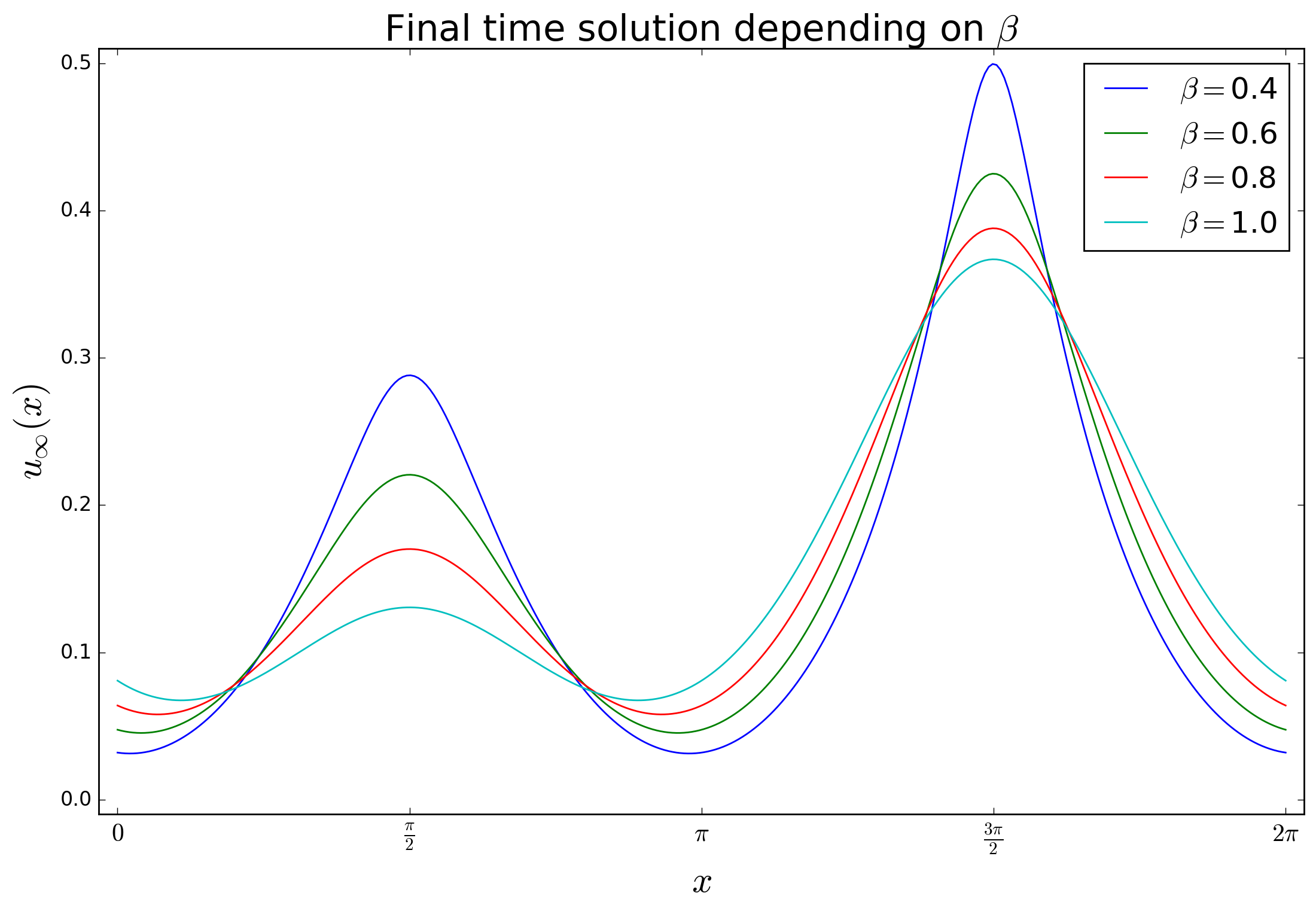}
    \caption{The approximate solution at final time as $\beta$ varies. To compute the solution we used $\alpha=1$ and a final time of $T=10000$. Smaller $\beta$ entails fatter tails of the noise distribution, and we see the solution is more peaked in this case, but also has more mass around the metastable state close to $\tfrac\pi2$.}
    \label{fig:final_time_sol}
\end{figure}

\begin{figure}
    \centering
    \includegraphics[width=1\linewidth]{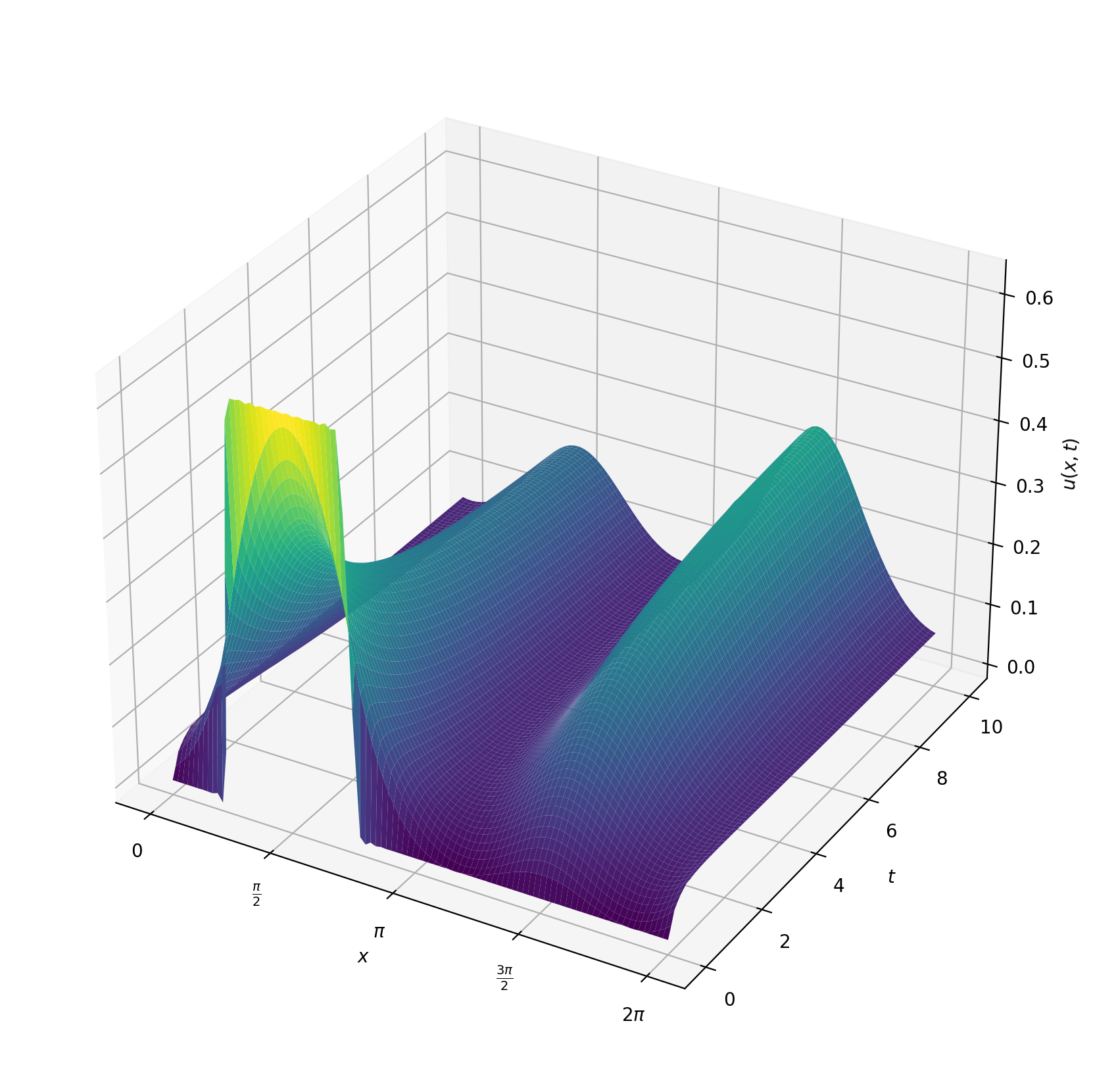}
    \caption{A numerical approximation of the solution for the case $\alpha = \beta = 0.8$, where the initial condition is a characteristic function centred at $\tfrac\pi2$.}
    \label{fig:3d_sol}
\end{figure}

\def\poincare{\hyperref[prop:poincare]{Poincar\'e's inequality} }

\subsection{Main results}
The first of our main results is to establish well-posedness of a mild generalisation of \eqref{eq:original_pde}, where we allow the potential $V$ to depend on time and introduce a source term $f$ on the right-hand side of the equation, allowing sources and sinks. In this context, we aim to find a weak solution with the initial condition given by $u(x, 0) = h(x)$ for some given $h$.
In other words, we seek $u : \T^d \times [0,T] \to \R$ satisfying
\begin{equation}\begin{aligned}\label{eq:weak_FPDE_0}
    \int_{\T^d} \left( \pd[\alpha]{u}{t}+K(-\Delta)^{\beta}u - \nabla \cdot\big(\nabla V u\big) \right) v \d x &= \int_{\T^d} f v \d x &&\text{ in } \T^d\times (0,T) \\
    u &= h  &&\text{ on } \T^d \times \{0\}
\end{aligned}\end{equation}
for every $v \in L^2(\T^d)$.
Here, $K > 0$ is a positive constant, $V : \T^d \times [0,T] \to \R$ a time-dependent potential and $f : \T^d \times [0,T] \to \R$ is a source term, and we will require $\alpha \in (0,1]$ and $\beta > \frac12$. Later, we will assume that 
\begin{equation}\label{eq:source_term_condition}
    \int_{\T^d} f(x,t) \d x = 0
\end{equation}
for almost every $t\in[0,T]$. Intuitively-speaking, this means that $f$ only redistributes mass, but does not add or remove any, and is a standard assumption required to make sense of the equation in the periodic setting.
This requirement is used to justify \cref{rmk:norms}, so that we do not have to worry about the integral mean term in the statement of \poincare in the following analysis.
However, the assumption \eqref{eq:source_term_condition} can be dropped and replaced by an estimate on the growth of $|(u)|$ in time depending on $f$, if one is willing to include an additional term in all the following computations.

The main goal of this work is therefore to show the following theorem:
\begin{theorem}
    Suppose $\beta > \frac12$, and $V$, $f$ and $h$ are sufficiently regular.
    Then there is a unique solution $u:\T^d \times [0,T] \to \R$ satisfying \eqref{eq:weak_FPDE_0}. Furthermore, the solution $u$ is sufficiently regular to make sense of $\pd[\alpha]{u}{t}$, $\flap{\beta}u$ and $\nabla u$ for almost every time $t$.
\end{theorem}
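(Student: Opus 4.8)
The plan is a Galerkin approximation in the Fourier basis $\{e_k\}_{k\in\Z^d}$ of $L^2(\T^d)$, which has the advantage of simultaneously diagonalising the fractional Laplacian $\flap{\beta}$. Fix $N$ and seek $u_N(x,t)=\sum_{0<|k|\le N}c_k(t)\,e_k(x)$ (working on the mean-zero subspace in view of \eqref{eq:source_term_condition}) so that the weak formulation \eqref{eq:weak_FPDE_0} holds with $v$ ranging over $\mspan\{e_k : 0<|k|\le N\}$ and $u_N(\cdot,0)$ equal to the $L^2$-projection of $h$. Since the drift $-\nabla\cdot(\nabla V u)$ couples distinct Fourier modes through the time-dependent potential, this produces a closed, finite system of Caputo fractional differential equations $\pd[\alpha]{c}{t}=A(t)c+g(t)$ for the coefficient vector $c=(c_k)$. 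I would recast this as the equivalent Volterra integral equation $c(t)=c(0)+\frac{1}{\Gamma(\alpha)}\int_0^t(t-s)^{\alpha-1}\big(A(s)c(s)+g(s)\big)\d s$ and obtain a unique continuous solution on all of $[0,T]$ by a fixed-point argument exploiting the integrability of the kernel $(t-s)^{\alpha-1}$, using the mapping properties of the Riemann--Liouville integral recalled in \cref{sec:prelims}.

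The heart of the argument is a family of a priori bounds independent of $N$, obtained by taking $v=u_N(t)$ as the test function. Three structural facts are needed: first, the fractional ``convexity'' inequality $\inprod{\pd[\alpha]{u_N}{t}}{u_N}\ge\tfrac12\pd[\alpha]{}{t}\norm{u_N}^2$, which substitutes for the usual chain rule on the time term; second, the coercivity identity $K\inprod{\flap{\beta}u_N}{u_N}=K\norm{\flap{\beta/2}u_N}^2$, which controls the $\sobsp{\beta}$ seminorm (equivalent, by \poincare on the mean-zero subspace, to the full $\sobsp{\beta}$ norm); and third, after integrating by parts twice, $\inprod{\nabla\cdot(\nabla V u_N)}{u_N}=\tfrac12\inprod{\Delta V}{u_N^2}$, which is bounded by $\tfrac12\norm[L^\infty]{\Delta V(\cdot,t)}\norm{u_N}^2$ once $V\in L^\infty(0,T;W^{2,\infty}(\T^d))$. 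Combining these with Young's inequality applied to $\inprod{f}{u_N}$ and then invoking a fractional Grönwall lemma yields $\esssup_{t\in[0,T]}\norm{u_N(t)}^2+\int_0^T\norm{\flap{\beta/2}u_N}^2\d t\le C$ with $C$ depending only on $\norm{h}$, $f$, $V$ and $T$. A further bound on $\pd[\alpha]{u_N}{t}$ in a dual space, read off from the equation itself, supplies the temporal regularity needed in the limit passage.

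With these uniform bounds in hand, I would extract a subsequence converging weakly-$\ast$ in $L^\infty(0,T;L^2(\T^d))\cap L^2(0,T;\sobsp{\beta})$ to a limit $u$. Because the equation is \emph{linear}, passing to the limit in the weak formulation is straightforward: one first tests against functions $\phi(t)e_k(x)$ with $\phi$ smooth, for which the convergences above suffice, and then concludes by density. The one delicate point is recovering the initial datum $u(\cdot,0)=h$: since in the Caputo formulation the initial value is encoded through the Riemann--Liouville integral rather than pointwise continuity, one shows that $I^{1-\alpha}_t(u-h)\to0$ as $t\to0^+$ (and that this integral is absolutely continuous), so that the trace at $t=0$ is meaningful and equals $h$. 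Uniqueness is then immediate from the same machinery: the difference $w$ of two solutions solves the homogeneous equation with zero initial data, and testing with $w$ and applying the fractional Grönwall lemma forces $w\equiv0$.

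Finally, for the regularity assertion I would bootstrap: testing the Galerkin equations against $\flap{s}u_N$, i.e.\ weighting the coefficients $c_k$ by $|k|^{2s}$, and assuming $V$, $f$, $h$ correspondingly regular gives, by the identical energy computation, $u\in L^\infty(0,T;\sobsp{s})\cap L^2(0,T;\sobsp{s+\beta})$. Taking $s\ge\beta$ and reading the equation as $\pd[\alpha]{u}{t}=f-K\flap{\beta}u+\nabla\cdot(\nabla V u)$, the right-hand side lies in $L^2(\T^d)$ for almost every $t$, so each of $\pd[\alpha]{u}{t}$, $\flap{\beta}u$ and $\nabla u$ makes sense in $L^2(\T^d)$ for a.e.\ $t$. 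This is exactly where the hypothesis $\beta>\tfrac12$ enters: $2\beta>1$ gives the embedding $\sobsp{2\beta}\hookrightarrow\sobsp{1}$, so the smoothing produced by $K\flap{\beta}$ is strong enough both to absorb the first-order drift and to give a meaning to $\nabla u$. I expect the genuinely new obstacle, compared with the integer-order case, to lie in the fractional-in-time analysis — establishing and correctly deploying the fractional convexity inequality and a fractional Grönwall estimate, and replacing the Aubin--Lions compactness argument, which does not transcribe verbatim when $\partial_t$ is replaced by $\pd[\alpha]{}{t}$; the linearity of \eqref{eq:original_pde} is what keeps the compactness requirements mild enough for the scheme to close with only weak convergence.
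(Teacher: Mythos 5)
Your proposal is correct and follows the same overall architecture as the paper: Galerkin approximation in the Fourier basis, solvability of the resulting finite fractional ODE system (the paper invokes a fractional Picard--Lindel\"of theorem, \cref{thm:ode_existence_and_uniqueness}, whose proof is precisely your Volterra fixed-point recast), energy estimates built on the fractional convexity inequality $\inprod[{\sobsp{-\beta},\sobsp\beta}]{\*u^{(\alpha)}}{\*u}\geq\tfrac12\deriv[\alpha]{}{t}\norm{\*u}^2$ (\cref{thm:cavalho_neto_fehlberg}) and a fractional Gr\"onwall lemma (\cref{thm:gronwall}), weak limit passage, uniqueness by the same machinery, and a bootstrap to $L^2([0,T],\sobsp{2\beta})$ with $\*u^{(\alpha)}\in L^2([0,T],L^2)$ for the final regularity claim. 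The one genuine divergence is your treatment of the drift term in the symmetric energy estimate: the double integration by parts $\int_{\T^d} u\nabla V\cdot\nabla u\d x=-\tfrac12\int_{\T^d}\Delta V\,u^2\d x$ bounds it by $\tfrac12\norm[L^\infty]{\Delta V}\norm{u}^2$ with no derivatives falling on $u$, which is more elementary than the paper's route (the Fourier-side estimate of \cref{prop:weak_bound_forcing_term} combined with the interpolation \cref{lemma:youngtype_ineq}) and, notably, works for every $\beta>0$. What your trick does \emph{not} supply is a bound on the asymmetric form $\bigl|\int_{\T^d} u\nabla V\cdot\nabla v\d x\bigr|\leq c(V)\norm[H^\beta]{u}\norm[H^\beta]{v}$, which the paper needs three times: for the dual-norm estimate on $\*u_m^{(\alpha)}$ in $\sobsp{-\beta}$ (which you flag but do not prove), for extending the limiting weak equation from $\mspan\{w_k\}$ to all of $\sobsp\beta$ by density, and for the coercivity of $B$ in the uniqueness argument when one only knows $B[\*u,\*u]$ through the weak formulation rather than through the Galerkin structure. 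It is exactly in establishing this bilinear bound that the hypothesis $\beta>\tfrac12$ (or $\geq\tfrac12$) earns its keep --- not, as your last paragraph suggests, primarily through the embedding $\sobsp{2\beta}\hookrightarrow\sobsp1$ --- so to close the argument you would still need something like \cref{prop:weak_bound_forcing_term} alongside your more elementary symmetric estimate.
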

\noindent What ``sufficiently regular'' in each case means is made precise in the statements of \cref{thm:existence} and \cref{thm:regularity}. As an example, the result holds if $f$ is bounded, $\flap{\beta/2}h$ exists, $V$ is three times continuously differentiable in space and its first spatial derivative is continuous in time. Theorems \ref{thm:existence} and \ref{thm:regularity} specify slightly weaker assumptions.

The methods we use depend centrally on the assumption $\beta > \frac12$ and the fact that the equation is considered on a periodic domain $\T^d$.
The former assumption ensures that the order of differentiation of the fractional Laplacian dominates the forcing term involving $V$, so that the equation remains of parabolic type. The latter assumption allows us to work with Fourier series to simplify many expressions. 
However, we do observe that some statements for $\beta = \frac12$ are also valid, and the numerical scheme we use appears to converge to a reasonable solution even for $\beta < \frac12$, given that $\beta$ stays away from $0$, so it may be that our results generalise further.

The main novelty of this work is dealing with the two fractional differential operators in establishing the tools to prove the existence and uniqueness of solutions to the system \eqref{eq:weak_FPDE_0} in \cref{sec:existence}. 
The paper is structured as follows: \cref{sec:prelims} contains several self-contained subsections, establishing prerequisites for the main results. 
Most notably, \cref{thm:coercivity_bound} generalises a fundamental result in the analysis of elliptic equations to the case of the fractional Laplacian.
Furthermore, \cref{thm:cavalho_neto_fehlberg}, the main result from \cite{neto2020}, replaces a bound usually established using the Leibniz rule, which is unavailable for the Caputo fractional derivative.
\cref{sec:existence} and \cref{sec:regularity} show the existence, uniqueness and some regularity of a solution to \eqref{eq:weak_FPDE_0}, given some regularity assumptions on $V$, $f$ and $h$.
Theorems \ref{thm:approx_solns}, \ref{thm:energy_estimates}, \ref{thm:existence}, \ref{thm:uniqueness} and \ref{thm:regularity} adapt theorems found in Chapter 7 of \cite{evans2010}.
The regularity result in \cref{thm:regularity} is then used to show the $L^2$-convergence of numerically computable approximations to the solution, providing a numerical scheme that was also used to create the plots in this paper.

\section{Preliminaries}\label{sec:prelims}

Our main goal is to prove the existence of solutions to \eqref{eq:weak_FPDE_1}. 
Following a Galerkin approximation strategy, we will find approximate solutions in lower-dimensional subspaces. 
Then, we prove bounds on these approximations that we can use to pass to a limit to find a full solution.
This strategy requires many prerequisite results that are collected in this section.

\subsection{Fourier series}
We fix a dimension $d\in\Z_{>0}$ and define $\T := \R/(2\pi\Z)$ to be the one-dimensional periodic domain; recall that we consider the PDE on $\T^d$.
For $k\in\Z^d$, define $w_k : \T^d \to \C$ by
\begin{equation}\label{eq:basiselements}
    w_k(x) := \frac1{(2\pi)^{d/2}} \e^{i k \cdot x}.
\end{equation}
We note that $\{w_k\}_{k\in \Z^d}$ is an orthonormal basis of $L^2(\T^d, \C)$; for convenience, we denote this space as $L^2:=L^2(\T^d, \C)$.

Given a function $u \in L^2$, we define the Fourier coefficients of $u$ to be
\begin{equation*}
    \longft u (k) = \ft u (k) = \inprod{u}{w_k} = \int_{\T^d} u(x) \overline{w_k(x)} \d x = \frac1{(2\pi)^{d/2}}\int_{\T^d} u(x) \e^{-ik\cdot x} \d x,
\end{equation*}
for any $k \in \Z^d$, and then we have that
\begin{equation*}
    u(x) = \sum_{k\in \Z^d} \ft u(k) w_k(x),
\end{equation*}
with the equality in $L^2$ \cite[p22]{Rudin1962}.
Finally, the following result will allow us to move freely from equations in $L^2$ to equations in the Fourier space $l^2 := l^2(\Z^d,\C)$.

\begin{proposition}[Parseval's identity]\label{prop:parseval}
    For any function $u$, it holds that
    \begin{equation*}
        \norm[L^2]{u} = \norm[l^2]{\ft u}.
    \end{equation*}
    In particular, $u \in L^2$ if and only if $\ft u \in l^2$.
\end{proposition}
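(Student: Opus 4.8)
The plan is to reduce everything to the classical Hilbert-space fact that the coefficients of a vector with respect to an orthonormal basis are square-summable with matching norm. Since the excerpt already records that $\{w_k\}_{k\in\Z^d}$ is an orthonormal basis of $L^2$, the argument is essentially bookkeeping with truncated Fourier sums, and I would present it in three short steps: Bessel's inequality, promotion to an equality via completeness, and the converse direction.

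First, for $N \in \Z_{>0}$ I would set $u_N := \sum_{k \in \{-N,\dots,N\}^d} \ft u(k)\, w_k$. Expanding $\norm{u_N}^2$ and using that the $w_k$ are orthonormal gives $\norm{u_N}^2 = \sum_{k \in \{-N,\dots,N\}^d} \lvert \ft u(k)\rvert^2$. Since $\inprod{u - u_N}{w_k} = \ft u(k) - \ft u(k) = 0$ for every $k \in \{-N,\dots,N\}^d$, the vector $u - u_N$ is orthogonal to $u_N$, so the Pythagorean identity yields $\norm{u}^2 = \norm{u - u_N}^2 + \norm{u_N}^2 \ge \norm{u_N}^2$. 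This is Bessel's inequality $\sum_{k \in \{-N,\dots,N\}^d}\lvert\ft u(k)\rvert^2 \le \norm{u}^2$, and letting $N \to \infty$ shows the series $\sum_{k\in\Z^d}\lvert\ft u(k)\rvert^2$ converges with sum at most $\norm[L^2]{u}^2$; in particular $\ft u \in l^2$ whenever $u \in L^2$.

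To turn Bessel's inequality into the claimed equality I would invoke completeness of the system: because $\{w_k\}_{k\in\Z^d}$ is a \emph{basis}, the partial sums satisfy $u_N \to u$ in $L^2$ as $N \to \infty$, hence $\norm{u_N} \to \norm{u}$. Combining this with the identity $\norm{u_N}^2 = \sum_{k \in \{-N,\dots,N\}^d} \lvert \ft u(k)\rvert^2$ gives $\norm[L^2]{u}^2 = \sum_{k\in\Z^d}\lvert\ft u(k)\rvert^2 = \norm[l^2]{\ft u}^2$, which is the stated identity after taking square roots.

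Finally, for the converse in ``$u \in L^2$ if and only if $\ft u \in l^2$'', I would argue that if $(\ft u(k))_{k\in\Z^d} \in l^2$ then for $M < N$ one has $\norm{u_N - u_M}^2 = \sum_{k \,:\, M < \max_j|k_j| \le N}\lvert\ft u(k)\rvert^2 \to 0$ as $M,N \to \infty$, by the Cauchy criterion applied to the convergent series $\sum_k \lvert\ft u(k)\rvert^2$. Thus $(u_N)$ is Cauchy in the complete space $L^2$ and converges to some $\tilde u \in L^2$, and continuity of the inner product gives $\inprod{\tilde u}{w_k} = \lim_N \inprod{u_N}{w_k} = \ft u(k)$ for all $k$, so $\tilde u$ is the function with the prescribed coefficients. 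I do not expect a genuine obstacle: the only non-elementary input is the completeness of the trigonometric system, which is precisely the fact, quoted from \cite{Rudin1962}, that $\{w_k\}_{k\in\Z^d}$ is an orthonormal basis of $L^2$; everything else is elementary manipulation of finite sums and limits.
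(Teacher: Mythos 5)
Your argument is correct and is the standard derivation of Parseval's identity from the fact that $\{w_k\}_{k\in\Z^d}$ is an orthonormal basis: Bessel's inequality via the Pythagorean decomposition, equality from completeness of the system, and the converse from the Cauchy criterion in $L^2$. The paper itself states this proposition without proof, treating it as a classical fact with the basis property cited from Rudin, so there is no competing argument to compare against; your write-up fills that gap soundly.
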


\subsection{Fractional Sobolev Spaces}
Fix $\beta > 0$. We focus on the case where $\beta \leq 1$, but this restriction is not necessary.
For our purposes, the following definition of the fractional Laplacian as a Fourier multiplier is most convenient \cite[p7]{lischke2020}:
\begin{definition}[Fractional Laplacian]\label{def:flap}
    If $\beta > 0$ and $u \in L^2$, then the Fractional Laplacian of $u$ of order $\beta$ is identified with the pseudodifferential operator defined by the Fourier multiplier $-|k|^{2\beta}$, that is,
    \begin{equation}\label{eq:flap}
        -\flap{\beta} u = \F^{-1}\left[-|k|^{2\beta}\F [u](k)\right]
    \end{equation}
    whenever the sum $\sum_{k\in\Z^d} |k|^{2\beta} \ft u (k)$ is finite.
\end{definition}

Equipped with this notion, we want to find the right space to look for solutions in. We start by defining the fractional Sobolev space
\begin{equation}
    H^\beta := H^\beta(\T^d, \C) := \left\{ u\in L^2(\T^d, \C) : \norm{\flap{\beta/2} u} < \infty \right\},
\end{equation}
equipped with the norm
\begin{equation}
    \norm[H^\beta]{u} := \left( \norm{\flap{\beta/2} u }^2 + \norm{u}^2 \right)^\frac12.
\end{equation}
In the case $\beta = 1$, this definition aligns with the usual definition of the space of periodic functions
\begin{equation}
    H^1 := \left\{ u\in L^2(\T^d,\C) : \norm{ |\nabla u| } < \infty \right\},
\end{equation}
since
\begin{equation}\label{eq:H1_norms_are_equivalent}
    \norm{\flap{1/2} u}^2 = \sum_{k\in \Z^d} |k|^2 |\ft u (k)|^2 = \norm{|\nabla u| }^2.
\end{equation}

For our analysis, it will be vital that we can bound lower-order derivatives by higher-order derivatives. The following result provides an appropriate version of Poincar\'e's inequality to do this.
\begin{proposition}[Poincar\'e's inequality] \label{prop:poincare} \
    \begin{enumerate}[(i)]
        \item Suppose $\beta > 0$; then for any $u \in H^\beta$ we have
        \begin{equation}\label{eq:poincare1}
            \norm{u - (u)} \leq  \norm{\flap{\beta/2} u},
        \end{equation}
        where
        \begin{equation*}
            (u) := \dashint_{\T^d} u(x,t) \d x
        \end{equation*}
        is the average of $u$ over $\T^d$.
        \item Suppose $0 < \gamma \leq \beta$. Then for any $u \in H^\beta$,
        \begin{equation*}
            \norm{\flap{\gamma/2} u} \leq \norm{\flap{\beta/2} u}.
        \end{equation*}
    \end{enumerate}
\end{proposition}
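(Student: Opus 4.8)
The plan is to pass to Fourier space via Parseval's identity (\cref{prop:parseval}) and reduce both inequalities to elementary termwise comparisons of nonnegative series, exploiting the fact that on $\T^d$ the nonzero frequencies satisfy $|k| \geq 1$.

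For part (i), I would first note that the constant function $(u)$ is supported on the zero Fourier mode: its only nonzero coefficient is $\widehat{(u)}(0) = \ft u(0)$. Hence $u - (u)$ has Fourier coefficients $\ft u(k)$ for $k \neq 0$ and $0$ for $k = 0$, and similarly the $k = 0$ term drops out of $\norm{\flap{\beta/2}u}^2$ since $|0|^{2\beta} = 0$. Parseval's identity together with \cref{def:flap} then gives
\begin{equation*}
    \norm{u - (u)}^2 = \sum_{k \in \Z^d \setminus \{0\}} |\ft u(k)|^2 \leq \sum_{k \in \Z^d \setminus \{0\}} |k|^{2\beta} |\ft u(k)|^2 = \norm{\flap{\beta/2} u}^2,
\end{equation*}
where the middle inequality is termwise and uses that $|k| \geq 1$ and $\beta > 0$ force $|k|^{2\beta} \geq 1$. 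Taking square roots yields \eqref{eq:poincare1}.

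Part (ii) follows by the same Fourier representation: one has $\norm{\flap{\gamma/2}u}^2 = \sum_{k} |k|^{2\gamma}|\ft u(k)|^2$ and $\norm{\flap{\beta/2}u}^2 = \sum_{k} |k|^{2\beta}|\ft u(k)|^2$, the $k = 0$ terms again vanish, and for every $k$ with $|k| \geq 1$ the hypothesis $0 < \gamma \leq \beta$ gives $|k|^{2\gamma} \leq |k|^{2\beta}$. Summing the termwise inequality and taking square roots proves the claim; in passing this also shows $\flap{\gamma/2}u$ is well-defined for any $u \in H^\beta$.

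There is no real obstacle here; the only point requiring a little care is the bookkeeping around the zero mode, namely checking that subtracting the mean removes exactly the $k = 0$ contribution and that this mode is harmless on the right-hand sides because the multiplier $|k|^{2\beta}$ annihilates it. Everything else is a one-line estimate.
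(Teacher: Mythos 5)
Your proof is correct and follows essentially the same route as the paper: pass to Fourier space via Parseval, observe that subtracting the mean kills exactly the $k=0$ mode, and compare termwise using $|k|\geq 1$ for nonzero frequencies. Your bookkeeping of the exponent ($|k|^{2\beta}$ for $\norm{\flap{\beta/2}u}^2$) is in fact slightly more careful than the paper's own display, which has a minor typo in the middle sum.
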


\begin{proof}
    Using \cref{prop:parseval} (Parseval's identity), we can express \eqref{eq:poincare1} into the following statement about Fourier coefficients:
    \begin{equation*}
        \norm{u-(u)}^2 = \sum_{k\in \Z^d \setminus \{0\}} |\ft u (k)|^2 \leq \sum_{k\in\Z^d} |k|^\beta |\ft u(k)|^2 = \norm{\flap{\beta/2} u}^2.
    \end{equation*}
    We note that subtracting the average $(u)$ in \eqref{eq:poincare1} yields the fact that the $k=0$ term in the sum is zero.
    The second inequality follows by a similar argument.
\end{proof}

The appearance of the average $(u)$ in \eqref{eq:poincare1} (and the fact from \cref{thm:constant_mass} that we will establish later that the mass of a function $u$ satisfying the weak PDE is constant in time) motivates working in a space of functions $u$ of constant mass $(u) = 0$. 
Thus, define $\sobsp\beta$ to be the closure of the the subspace
\begin{equation*}
    \sobsp\beta:=\overline{\{u \in C^\infty (\T^d) : (u) = 0 \}}^{H^\beta}
\end{equation*}
in the Hilbert space $H^\beta$ with its corresponding norm.

\begin{remark}\label{rmk:norms}
    The function $u \mapsto \norm{\flap{\beta/2} u}$ is a seminorm on $H^\beta$; on the other hand, due to the constraint that $(u) = 0$, it is a norm on $\sobsp\beta$ that we could use instead.
    However, there is no need since \cref{prop:poincare} (Poincar\'e's inequality) implies that on $\sobsp\beta$, the norms $\norm{\flap{\beta/2} \,\cdot\,}$ and $\norm[H^\beta]{\,\cdot\,}$ are equivalent.
\end{remark}

\subsection{The fractional time derivative}
Let us turn our attention to the Caputo derivative in the time variable.
Given a function $\varphi \in L^1 ([0,T], \R)$, we define its fractional integral $I^\alpha \varphi(t)$ of order $\alpha > 0$ to be \cite[p69]{kilbas2006}
\begin{equation}
    I^\alpha \varphi(t) := \frac1{\Gamma(\alpha)} \int_0^t (t-\tau)^{\alpha-1} \varphi(\tau) \d \tau.
\end{equation}

If instead $\varphi$ takes values in an arbitrary separable Banach space $X$ and we require $\varphi$ to be an element of the Bochner space $L^1([0,T],X)$, the fractional integral $I^\alpha\varphi (t)$ can be defined in the same way via Bochner integration.

Using this definition to fractionally integrate in time, we then differentiate to give the following definition for a fractional derivative.
\begin{definition}[Caputo fractional derivative] See \cite[p78ff]{podlubny1999}.
    Let $\alpha \in (0,1)$. Let $X$ be a separable Banach space and $\varphi \in C([0,T],X)$ such that $I^{1-\alpha}\varphi$ is absolutely continuous. 
    Then the Caputo fractional derivative of $\varphi$ is defined to be
    \begin{equation*}
        \deriv[\alpha]{\varphi}{t} := \deriv{}{t} \bigl(I^{1-\alpha} (\varphi(t) - \varphi(0)) \bigr).
    \end{equation*}
\end{definition}

\begin{remark}
    A corollary of Morrey's inequality is that the space of absolutely continuous functions $AC([0,T],X)$ is precisely the Sobolev space $W^{1,1}([0,T],X)$.
    Thus, requiring a function $\varphi$ to be absolutely continuous is the same as asking for its first weak derivative to exist \cite[p676]{neto2020}.
\end{remark}

Observe that in the above definition, we require $\varphi$ to be continuous only to make sense of $\varphi(0)$.
\emph{A priori}, we would like to be able to take the Caputo fractional derivative of $L^1$-functions.
Thus, in the following, we will establish a notion that allows us to make sense of $\varphi(0)$, motivated by the Lebesgue differentiation theorem; this is a special case of Proposition 2.10 in \cite[p93]{heinonen2001}.

\begin{theorem}[A version of the Lebesgue differentiation theorem] \label{thm:leb_diff_thm}
    Suppose $X$ is a separable Banach space and $\varphi\in L^1([0,T],X)$.
    Then we have that almost every $t \in [0,T]$ is a Lebesgue point of $\varphi$, that is, 
    \begin{equation}
        \lim_{\varepsilon \to 0} \dashint_{B_\varepsilon(t)} \norm[X]{\varphi(\tau) - \varphi(t)} \d \tau = 0.
    \end{equation}
\end{theorem}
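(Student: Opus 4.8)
The plan is to reduce this vector-valued statement to the classical scalar Lebesgue differentiation theorem on the interval $[0,T]$, using separability of $X$ to replace the value $\varphi(t)$ by elements of a fixed countable set.

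First I would fix a countable dense subset $\{x_n\}_{n\in\Z_{>0}}$ of $X$. Since $\varphi$ is Bochner integrable it is strongly measurable, so for each $n$ the scalar function $\tau \mapsto \norm[X]{\varphi(\tau) - x_n}$ is measurable, and it lies in $L^1([0,T],\R)$ because $\norm[X]{\varphi(\tau) - x_n} \le \norm[X]{\varphi(\tau)} + \norm[X]{x_n}$ with $\norm[X]{\varphi(\cdot)}\in L^1$. Applying the classical Lebesgue differentiation theorem to each of these functions produces, for every $n$, a null set $E_n\subset[0,T]$ such that
\begin{equation*}
    \lim_{\varepsilon\to0}\dashint_{B_\varepsilon(t)\cap[0,T]}\norm[X]{\varphi(\tau)-x_n}\d\tau = \norm[X]{\varphi(t)-x_n}
    \quad\text{for all } t\in[0,T]\setminus E_n.
\end{equation*}
Taking $E:=\bigcup_n E_n$, which is again null, handles all of the $x_n$ simultaneously.

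Next I would fix $t\in[0,T]\setminus E$ and $\delta>0$, and use density to choose $n$ with $\norm[X]{\varphi(t)-x_n}<\delta$. The triangle inequality then gives, for every $\varepsilon>0$,
\begin{equation*}
    \dashint_{B_\varepsilon(t)\cap[0,T]}\norm[X]{\varphi(\tau)-\varphi(t)}\d\tau
    \le \dashint_{B_\varepsilon(t)\cap[0,T]}\norm[X]{\varphi(\tau)-x_n}\d\tau + \norm[X]{x_n-\varphi(t)}.
\end{equation*}
Letting $\varepsilon\to0$ and using the previous display together with $\norm[X]{\varphi(t)-x_n}<\delta$ yields $\limsup_{\varepsilon\to0}\dashint_{B_\varepsilon(t)\cap[0,T]}\norm[X]{\varphi(\tau)-\varphi(t)}\d\tau\le 2\delta$; since $\delta>0$ was arbitrary the $\limsup$ is zero, which is the assertion, with exceptional null set $E$.

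There is no genuine obstacle here: the argument is essentially a direct transcription of the scalar proof. The only points requiring a little care are that the exceptional set must be chosen uniformly over the whole countable family $\{x_n\}$ (which is why the countable union $E$ works) and that every value $\varphi(t)$ can be approximated by the $x_n$ (automatic, since $\{x_n\}$ is dense in all of $X$, not merely in the essential range of $\varphi$). The separability hypothesis on $X$ is precisely what makes this reduction possible.
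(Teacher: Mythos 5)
Your argument is correct and complete: it is the standard reduction of the vector-valued Lebesgue differentiation theorem to the scalar case via a countable dense subset of $X$, with the exceptional null set taken as the countable union over that family. The paper itself does not prove this statement but simply cites it as a special case of Proposition 2.10 in \cite[p93]{heinonen2001}; the proof you give is essentially the one found there, so there is nothing to compare beyond noting that you have supplied the details the paper omits.
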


Notice that the conclusion of \cref{thm:leb_diff_thm} holds for \emph{every} $t \in [0,T]$ if $\varphi$ is continuous as a consequence of the integral mean value theorem.
Furthermore, the theorem guarantees that reassigning values of $\varphi$ according to \eqref{eq:sense_ic} does not change the function in $L^1([0,T],X)$.
This means that it is reasonable to define

\begin{equation}\label{eq:sense_ic}
    \varphi(0) := \lim_{\varepsilon\to 0} \int_0^T \varphi(\tau) \eta_\varepsilon(\tau) \d \tau,
\end{equation}
where
\begin{equation}\label{eq:mollifier}
    \eta_\varepsilon(t) = \frac1\varepsilon \*1_{[0,\varepsilon)} (t).
\end{equation}
Choosing $\varphi(0)$ in this way corresponds to choosing the precise representative of the $L^1$-functions (see \cite[p138ff]{Rudin1987}).

We now have a way compatible with the remaining analysis to compute the Caputo fractional derivative of a function $\varphi\in L^1([0,T],X)$ that is not necessarily continuous at $0$, as long as $I^{1-\alpha} \varphi$ is absolutely continuous.

\subsection{Weak formulation}
Let us return to the equation \eqref{eq:weak_FPDE_0}. We seek a weak formulation of this equation. As such, we seek to pass part of the fractional Laplacian of $u$ onto a test function $v$.

\begin{lemma}[Divergence Theorem for the Fractional Laplacian]\label{lemma:flap_div_thm}
    For any $\beta, \beta' > 0$ such that we have that 
    \begin{equation*}
        \int_{\T^d} \flap{\beta} u\,\flap{\beta'} v \d x = \int_{\T^d} u \flap{\beta +\beta'} v \d x,    
    \end{equation*}
    whenever $u\in H^\beta$ and $v\in H^{\beta+\beta'}$.
\end{lemma}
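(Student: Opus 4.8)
The plan is to reduce the identity to an elementary computation on Fourier coefficients via \cref{prop:parseval} (Parseval's identity) and the fact that the fractional Laplacian acts as a Fourier multiplier. Since both sides are expressed as $L^2$ inner products, I would first rewrite $\inprod{\flap{\beta} u}{\flap{\beta'} v}$ and $\inprod{u}{\flap{\beta+\beta'} v}$ using the definition of the inner product on $L^2(\T^d,\C)$, being careful that the convention in this paper places a complex conjugate on the second slot.

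First I would use \cref{def:flap} to write the Fourier coefficients of $\flap{\beta} u$, $\flap{\beta'} v$ and $\flap{\beta+\beta'} v$: namely $\longft{\flap{\beta} u}(k) = |k|^{2\beta}\ft u(k)$, and similarly for the others. Then, by Parseval's identity applied to the inner product (the polarisation form, $\inprod{\phi}{\psi} = \sum_k \ft\phi(k)\overline{\ft\psi(k)}$), both sides become
\begin{equation*}
    \sum_{k\in\Z^d} |k|^{2\beta}\ft u(k)\,\overline{|k|^{2\beta'}\ft v(k)} = \sum_{k\in\Z^d} |k|^{2(\beta+\beta')}\ft u(k)\,\overline{\ft v(k)},
\end{equation*}
which holds termwise since $|k|^{2\beta}$ and $|k|^{2\beta'}$ are real and nonnegative, so $|k|^{2\beta}\overline{|k|^{2\beta'}} = |k|^{2(\beta+\beta')}$. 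This proves the identity once absolute convergence is justified.

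The main technical point — and the only real obstacle — is ensuring that all the series involved converge absolutely, so that the manipulations are legitimate and the pairing $\inprod{\flap{\beta} u}{\flap{\beta'} v}$ actually makes sense. Here I would invoke the hypotheses $u\in H^\beta$ and $v\in H^{\beta+\beta'}$: these give $\sum_k |k|^{2\beta}|\ft u(k)|^2 < \infty$ and $\sum_k |k|^{2(\beta+\beta')}|\ft v(k)|^2 < \infty$, whence also $\sum_k |k|^{2\beta'}|\ft v(k)|^2 < \infty$ by \cref{prop:poincare}(ii) (or directly, since $\beta' \le \beta+\beta'$). Writing the general term of the series as $\bigl(|k|^{2\beta}|\ft u(k)|\bigr)\bigl(|k|^{2\beta'}|\ft v(k)|\bigr)$ and applying Cauchy--Schwarz in $l^2(\Z^d)$ shows the series converges absolutely; the same argument with the splitting $|k|^{2(\beta+\beta')}|\ft u(k)||\ft v(k)| = \bigl(|k|^{2\beta}|\ft u(k)|\bigr)\bigl(|k|^{2\beta'}|\ft v(k)|\bigr)$ handles the right-hand side and, incidentally, confirms that $\flap{\beta} u \in L^2$ so that the left-hand pairing is a bona fide $L^2$ inner product. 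With absolute convergence in hand, rearrangement of the sum is justified and the termwise identity above completes the proof. (I would also note in passing that the statement as printed has a small typo — the condition ``such that'' after ``$\beta,\beta' > 0$'' is missing its clause; the intended hypothesis is simply $\beta,\beta'>0$ together with the membership conditions $u\in H^\beta$, $v\in H^{\beta+\beta'}$.)
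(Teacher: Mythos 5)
Your proposal is correct in substance and takes essentially the same route as the paper: both reduce the identity to the fact that $\flap{\gamma}$ is a real, nonnegative Fourier multiplier, so the exponents simply add; the paper verifies this on the basis elements $w_k$ and extends by linearity and density, while you compute directly on the Fourier coefficients via Parseval, which is the same argument with the density step replaced by an explicit convergence check. One caveat on that check: with the splitting you chose, Cauchy--Schwarz bounds $\sum_k\bigl(|k|^{2\beta}|\ft u(k)|\bigr)\bigl(|k|^{2\beta'}|\ft v(k)|\bigr)$ by $\norm[l^2]{|k|^{2\beta}\ft u}\,\norm[l^2]{|k|^{2\beta'}\ft v}$, which requires $u\in H^{2\beta}$ and $v\in H^{2\beta'}$ rather than the finiteness of $\sum_k|k|^{2\beta}|\ft u(k)|^2$ that you quote; this mismatch is inherited from the paper's own inconsistent conventions (\cref{def:flap} uses the multiplier $|k|^{2\beta}$ while the proofs of \cref{prop:poincare} and of this lemma use $|k|^{\beta}$), and the paper's ``by continuity'' step quietly has the same issue, so it is not a defect you introduced --- but if you want the absolute-convergence argument to close under the stated hypotheses you should say explicitly which convention you are using and choose the splitting accordingly.
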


\begin{proof}
    First, suppose that $u = w_k$ and $v = w_l$, where $l,k \in \Z^d$, where $w_k$ refers to the orthonormal Fourier basis elements introduced in \eqref{eq:basiselements}.
    Then
    \begin{equation*}
        \int_{\T^d} \flap{\beta} w_k \flap{\beta'} w_l \d x = |k|^\beta |l|^{\beta'} \inprod{w_k}{w_l}.
    \end{equation*}
    By construction, the latter inner product is zero unless $k = l$, and if $k=l$, then $|k|^\beta |l|^{\beta'}=|l|^{\beta+\beta'}$. As such,
    \begin{equation*}
        \int_{\T^d} \flap{\beta} w_k \flap{\beta'} w_l \d x = |l|^{\beta+\beta'} \inprod{w_k}{w_l} = \int_{\T^d} w_k \flap{\beta +\beta'} w_l \d x.
    \end{equation*}
    By linearity, the result holds for $u,v \in \mspan\left( \{w_k\}_k\right)$, but this space is dense in the corresponding Sobolev spaces $H^\beta$ and $H^{\beta+\beta'}$, so by continuity, we deduce the result.
\end{proof}

In light of the previous result, we define a bilinear form $B:H^\beta\times H^\beta\to \R$ as
\begin{equation*}
    B[u,v] := \int_{\T^d} K \flap{\beta/2} u \flap{\beta/2} v + (u\nabla V)\cdot \nabla v \d x.
\end{equation*}
With this definition, and applying the usual Divergence Theorem along with \cref{lemma:flap_div_thm}, we propose the following weak formulation of \eqref{eq:weak_FPDE_0}. We seek a function $u$ satisfying
\begin{equation}\begin{aligned}\label{eq:weak_FPDE_1}
    \Bigl\langle\pd[\alpha]{u}{t}, v \Bigr\rangle_{L^2} + B[u,v] &= \inprod{f}{v}  &&\text{ on } \T^d \text{ for almost every } t\in (0,T) \\
    u &= h  &&\text{ on } \T^d \text{ for } t = 0
\end{aligned}\end{equation}
for any $v \in \sobsp\beta$.

Let us obtain a version of \eqref{eq:weak_FPDE_1} in terms of the Fourier transform $\ft u$.
Observe that the Fourier coefficients of a product is the convolution of the Fourier coefficients, that is,
\begin{equation*}
    \longft{\varphi \phi} = \ft \varphi * \ft \phi,
\end{equation*}
where $*$ denotes the discrete convolution:
\[
\big(\hat{\varphi}*\hat{\phi}\big)(k) = \sum_{k'\in\Z^d}\hat{\varphi}(k-k')\hat{\phi}(k').
\]
Then for $v = w_k$, where $k \in \Z^d$, \eqref{eq:weak_FPDE_1} transforms into
\begin{equation}\label{eq:fpde_fourier_version}
    \pd[\alpha]{}{t}\ft u (k,t) + K |k|^\beta \ft u(k,t) - ik\cdot \left( \longft{\nabla V} * \ft u (\cdot, t)\right) = \ft f(k,t).
\end{equation}
Observe that condition \eqref{eq:source_term_condition} on the spatial integral of $f$ is equivalent to the statement that $\ft f(0,t) = 0$. 
In particular, if we take $k = 0$, \eqref{eq:fpde_fourier_version} trivially shows that $\pd[\alpha]{}{t} \ft u (0,t) = 0$, summarised in the following theorem:

\begin{theorem}[Conservation of mass]\label{thm:constant_mass}
    If $u$ is a weak solution to \eqref{eq:weak_FPDE_0}, where $\int_{\T^d} f(x,t) \d x = 0$ for almost every $t$, then the total mass $\int_{\T^d} u(x,t) \d x$ is constant in time.
\end{theorem}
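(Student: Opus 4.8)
The conclusion is, as the paragraph preceding the theorem already indicates, essentially read off from the zeroth Fourier mode; the plan is to make that reading precise and then supply the one genuinely analytic ingredient, namely that a scalar function whose Caputo derivative vanishes must be constant. \emph{Step 1 (isolating the zeroth mode).} Since the total mass equals $(2\pi)^{d/2}\ft u(0,t)$, it suffices to show $t\mapsto\ft u(0,t)$ is constant. Testing the weak formulation \eqref{eq:weak_FPDE_0} against the constant function $v\equiv w_0=(2\pi)^{-d/2}$ — which is admissible there since $w_0\in L^2$, even though $w_0\notin\sobsp\beta$ — we have $\flap{\beta/2}w_0=0$ and $\nabla w_0=0$, so $B[u,w_0]=0$, while $\inprod{f}{w_0}=(2\pi)^{-d/2}\int_{\T^d}f\d x=0$ for a.e.\ $t$ by \eqref{eq:source_term_condition}. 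Interchanging the spatial (Bochner) integral with the Caputo derivative — equivalently, reading \eqref{eq:fpde_fourier_version} at $k=0$ — this leaves
\begin{equation*}
    \deriv[\alpha]{}{t}\ft u(0,t)=0\qquad\text{for a.e.\ }t\in(0,T).
\end{equation*}

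\emph{Step 2 (vanishing Caputo derivative $\Rightarrow$ constant).} Set $m(t):=\ft u(0,t)$ and $\psi(t):=m(t)-m(0)$. By definition of the Caputo derivative, $\deriv{}{t}\bigl(I^{1-\alpha}\psi\bigr)=0$ a.e.; since $I^{1-\alpha}\psi$ is absolutely continuous (this absolute continuity is part of what it means for the Caputo derivative to exist), it is constant on $[0,T]$, and as it is continuous at $0$ with $I^{1-\alpha}\psi(0)=0$, that constant is $0$. Thus $I^{1-\alpha}\psi\equiv 0$. Applying $I^\alpha$ and using the semigroup identity $I^\alpha I^{1-\alpha}=I^1$ gives $\int_0^t\psi(\tau)\d\tau=0$ for every $t\in[0,T]$, and differentiating this (at Lebesgue points, via \cref{thm:leb_diff_thm}) yields $\psi=0$ a.e. Hence $m(t)=m(0)$, i.e.\ the mass is constant in time — and genuinely for every $t$ once one knows, as established in \cref{sec:regularity}, that $u$ is continuous in time.

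\emph{Main obstacle.} The steps that look like bookkeeping carry the content: one must know that $t\mapsto\ft u(0,t)$ is regular enough for its Caputo derivative to be defined at all — that is, that $I^{1-\alpha}$ applied to $m-m(0)$ is absolutely continuous, which is inherited from the standing notion of weak solution since the zeroth Fourier coefficient is a bounded linear functional — and one must justify commuting $\deriv[\alpha]{}{t}$ with $\int_{\T^d}\cdot\,\d x$. The genuinely analytic point is the inversion $I^{1-\alpha}\psi\equiv 0\Rightarrow\psi\equiv 0$ through $I^\alpha$, which is where \cref{thm:leb_diff_thm} enters; everything else follows immediately from the structure of \eqref{eq:weak_FPDE_1} and the vanishing of the Fourier multiplier $|k|^\beta$ and of $ik$ at $k=0$.
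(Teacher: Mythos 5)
Your proof is correct and follows the same route as the paper, which simply evaluates the Fourier-transformed equation \eqref{eq:fpde_fourier_version} at $k=0$ (equivalently, tests \eqref{eq:weak_FPDE_0} against the constant $w_0$) and concludes $\pd[\alpha]{}{t}\ft u(0,t)=0$. Your Step 2 --- deducing constancy from a vanishing Caputo derivative via the semigroup identity $I^\alpha I^{1-\alpha}=I^1$ --- carefully supplies the implication that the paper leaves implicit in the word ``trivially''.
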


Next, let us shift our perspective to the solution $u$. 
Define $\sobsp{-\beta} := \left(\sobsp\beta\right)^*$ to be the topological dual of $\sobsp\beta$. We note that 
\begin{itemize}
    \item $\sobsp\beta \subset L^2 \subset \sobsp{-\beta}$
    are all separable Hilbert spaces; and
    \item The fractional Sobolev space $\sobsp\beta$ is dense in $L^2$, and the embedding given by the identity map is continuous since $\norm{u} \leq \norm[H^\beta]{u}$
    for all $u \in \sobsp\beta$.
\end{itemize}
These observations mean that $\sobsp\beta \subset L^2 \subset \sobsp{-\beta}$ is a Gel'fand triple, which suggests that we look for solutions of the following form.
Define $\*u : [0,T] \to \sobsp\beta$ by
\begin{equation*}
    \*u(t) := u(\cdot, t).
\end{equation*}
Given that we expect some regularity in time, we expect $\*u$ to be an element of the Bochner space $L^2([0,T],\sobsp\beta)$.
Similarly, let us define $\*V, \*f  \in L^2([0,T],L^2)$ by
\begin{equation*}
    \*V(t) := V(\cdot, t)\quad
\text{and}\quad
    \*f(t) := f(\cdot, t).
\end{equation*}
We note that we will later require that $\*V$ and $\*f$ are elements of Bochner spaces with stronger regularity.

Now suppose that $w \in L^2 \subset \sobsp{-\beta}$ and $v \in \sobsp\beta$. Then we have
\begin{equation}\label{eq:pairing_inprod}
    \inprod[{\sobsp{-\beta},\sobsp\beta}]{w}{v} = \inprod{w}{v},
\end{equation}
(see for example equation (7.14) in \cite{roubicek2013})
where $\inprod[\sobsp{-\beta},\sobsp\beta]{w}{v}$ is the $\sobsp{-\beta}$--$\sobsp\beta$ pairing, applying $w$ to $v$.
From this point of view, let $\*u^{(\alpha)}$ be the Caputo fractional derivative of $\*u$ of order $\alpha$.
The weak formulation \eqref{eq:weak_FPDE_1} can then be read as
\begin{equation*}
    \inprod{\*u^{(\alpha)}(t)}{v} = \inprod{\*f(t)}{v} - B[\*u(t),v]\quad\text{for almost every }t\in(0,T).
\end{equation*}
The right-hand side can be treated as a linear functional acting on $v \in \sobsp\beta$, and this suggests that we can interpret the left-hand side as the pairing $\inprod[{\sobsp{-\beta},\sobsp\beta}]{\*u^{(\alpha)}}{v}$ applying $\*u^{(\alpha)}$ to $v \in \sobsp\beta$.
Thus, it is reasonable to look for a solution $\*u \in L^2([0,T], \sobsp\beta)$ whose Caputo fractional derivative of order $\alpha$ is $\*u^{(\alpha)} \in L^2([0,T],\sobsp{-\beta})$.

With these considerations in mind, \cref{sec:existence} and the remainder of \cref{sec:prelims} will establish the existence (and uniqueness) of a solution $\*u \in L^2([0,T], \sobsp\beta)$ with $\*u^{(\alpha)} \in L^2([0,T],\sobsp{-\beta})$ such that

\begin{equation}\begin{aligned}\label{eq:weak_FPDE}
    \inprod{\*u^{(\alpha)}}{v} + B[\*u,v] &= \inprod{f}{v}  && \text{ for almost every } t\in (0,T) \\
    \*u(0) &= h  && \text{ in the sense of \eqref{eq:sense_ic}}
\end{aligned}\end{equation}
for any $v \in \sobsp\beta$.

\subsection{A coercivity bound}

This subsection works towards establishing \emph{a priori} bounds on weak solutions $\*u$, which are encoded in \cref{thm:coercivity_bound}. The arguments are a generalisation of standard arguments for elliptic differential operators to the fractional case, but for completeness, we record the details in full.

\begin{lemma}[A Young-type inequality]\label{lemma:youngtype_ineq}
    Suppose $0 \leq \beta'' < \beta' < \beta$ and fix $\varepsilon > 0$ and sufficiently small. Then there exists a constant $c(\varepsilon)$ such that for any $u \in \sobsp{\beta}$,
    \begin{equation}
        \norm{\flap{\beta'} u}^2 \leq \varepsilon \norm{\flap{\beta} u}^2 + c(\varepsilon) \norm{\flap{\beta''} u}^2.
    \end{equation}
\end{lemma}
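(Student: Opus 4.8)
The plan is to pass to Fourier space and reduce the inequality to an elementary scalar estimate. Since $u\in\sobsp\beta$ has vanishing average, $\ft u(0)=0$, and by \cref{def:flap} together with \cref{prop:parseval} (Parseval's identity) I would write
\begin{equation*}
    \norm{\flap{\beta'} u}^2 = \sum_{k\in\Z^d\setminus\{0\}} |k|^{4\beta'}\,|\ft u(k)|^2,
\end{equation*}
with the analogous identities for $\norm{\flap{\beta} u}^2$ and $\norm{\flap{\beta''} u}^2$. It then suffices to prove the pointwise bound
\begin{equation*}
    r^{\beta'} \leq \varepsilon\, r^{\beta} + c(\varepsilon)\, r^{\beta''} \qquad \text{for every } r>0,
\end{equation*}
because applying it with $r=|k|^4$, multiplying by $|\ft u(k)|^2\geq 0$, and summing over $k$ yields the claim; the summation of non-negative terms is valid in $[0,\infty]$, so no separate discussion of finiteness is required.

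For the scalar inequality I would set
\begin{equation*}
    \theta := \frac{\beta'-\beta''}{\beta-\beta''}\in(0,1),
\end{equation*}
which is exactly the value for which $\beta' = \theta\beta + (1-\theta)\beta''$, i.e. $r^{\beta'} = (r^{\beta})^{\theta}(r^{\beta''})^{1-\theta}$. Applying Young's inequality in the form $ab\leq \theta a^{1/\theta} + (1-\theta)\,b^{1/(1-\theta)}$ to $a=(\lambda r^{\beta})^{\theta}$ and $b=(\lambda^{-\theta/(1-\theta)} r^{\beta''})^{1-\theta}$, for a free parameter $\lambda>0$, gives
\begin{equation*}
    r^{\beta'} \leq \theta\lambda\, r^{\beta} + (1-\theta)\,\lambda^{-\theta/(1-\theta)}\, r^{\beta''}.
\end{equation*}
Choosing $\lambda=\varepsilon/\theta$ produces precisely the coefficient $\varepsilon$ on the leading term and the explicit constant $c(\varepsilon):=(1-\theta)\,(\theta/\varepsilon)^{\theta/(1-\theta)}$ on the lower-order term, completing the proof after summation.

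I do not anticipate a genuine obstacle: the whole content is recognising the interpolation/Young structure, which is precisely why the lemma is labelled a ``Young-type inequality''. An equally short alternative, if one prefers to avoid Young's inequality, is to split the Fourier sum at $|k|^4=R$, bounding $r^{\beta'}\leq R^{\beta'-\beta} r^{\beta}$ when $r\geq R$ and $r^{\beta'}\leq R^{\beta'-\beta''} r^{\beta''}$ when $r\leq R$, and then choosing $R$ so that $R^{\beta'-\beta}=\varepsilon$. Two small remarks worth recording in the write-up: the zero-average constraint defining $\sobsp\beta$ is used only to discard the harmless $k=0$ term, and the hypothesis that $\varepsilon$ be ``sufficiently small'' is not actually needed for the estimate itself — it merely reflects the way the inequality is invoked in \cref{thm:coercivity_bound}.
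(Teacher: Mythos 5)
Your proof is correct, and it takes a genuinely different route from the paper. Both arguments pass to Fourier space via Parseval and reduce the claim to a pointwise inequality in $k$, but the mechanisms then diverge. The paper only ever uses the quadratic Cauchy--Young inequality $ab\leq\varepsilon a^2+b^2/(4\varepsilon)$, splitting the exponent $\beta'$ symmetrically by $\delta=\min\{\beta-\beta',\beta'-\beta''\}$; since one step of this may not reach $\beta''$, the argument must be iterated, with the effective small parameter degrading from $\varepsilon$ to $\varepsilon^2$ and so on at each stage, and a final appeal to \cref{prop:poincare} to push the upper exponent up to $\beta$. You instead identify the exact interpolation parameter $\theta=(\beta'-\beta'')/(\beta-\beta'')$ for which $\beta'=\theta\beta+(1-\theta)\beta''$ and apply the general-exponent Young inequality $ab\leq\theta a^{1/\theta}+(1-\theta)b^{1/(1-\theta)}$ once, which closes the proof in a single step and yields the explicit constant $c(\varepsilon)=(1-\theta)(\theta/\varepsilon)^{\theta/(1-\theta)}$. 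What your approach buys is brevity, a quantitative constant, and no bookkeeping of how the $\varepsilon$'s compound under iteration (the paper's remark that one can ``adjust the value of the initial $\varepsilon$'' is precisely the step you avoid); what the paper's buys is that it invokes only the elementary $p=q=2$ case of Young's inequality already quoted in \eqref{eq:apx_Cauchy_Young_ineq}. Your two side remarks are also accurate: the zero-average constraint only serves to dispose of the $k=0$ mode (and is harmless even when $\beta''=0$), and the hypothesis that $\varepsilon$ be sufficiently small is not needed for the estimate itself.
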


We will argue inductively, iteratively splitting the $\beta'$-term into a $(\beta'+\delta)$- and $(\beta'-\delta)$-term, until we hit $\beta''$. 
To keep the coefficient of the $\beta$-term arbitrarily small, we need the following Cauchy Young-type inequality \cite[p708]{evans2010}:
For $a,b \in \R$ and any $\varepsilon > 0$ we have that
\begin{equation} \label{eq:apx_Cauchy_Young_ineq}
    ab \leq \varepsilon a^2 + \frac{b^2}{4\varepsilon}.
\end{equation} 

\begin{proof}
    Define $\delta = \min\{\beta -\beta', \beta' -\beta'' \}$.
    We first use \cref{prop:parseval} to rewrite the expressions in Fourier space and then use \eqref{eq:apx_Cauchy_Young_ineq} to obtain the estimate
    \begin{equation}
        \begin{aligned}
            \norm{\flap{\beta'} u}^2 &= \sum_k |k|^{2(\beta'+\delta)}|\ft u(k)|   |k|^{2(\beta'-\delta)}|\ft u(k)| \\ 
        &\leq \sum_k \frac\varepsilon2 |k|^{4(\beta'+\delta)} |\ft u(k)|^2 +\frac1{2\varepsilon} |k|^{4(\beta'-\delta)}|\ft u(k)|^2  \\
        & = \frac\varepsilon2 \norm{\flap{\beta'+\delta} u}^2 + \frac1{2\varepsilon} \norm{\flap{\beta'-\delta} u}^2. 
        \end{aligned}
        \label{eq:proof_young_type}
    \end{equation}
    If $\delta = \beta'-\beta''$ so that the exponent is $\beta'-\delta=\beta''$ in the second term, we have $\beta'+\delta\leq\beta$, and we can employ \cref{prop:poincare} (Poincar\'e's inequality) to estimate the first term, i.e.
    \begin{equation*}
        \frac\varepsilon2 \norm{\flap{\beta'+\delta} u}^2 \leq \frac\varepsilon2 \norm{\flap{\beta} u}^2,
    \end{equation*}
    and we are done.
    
    Otherwise, we must have the opposite case, so that $\delta = \beta-\beta'$. We can repeat the same argument for the second term on the right-hand side of \eqref{eq:proof_young_type}, where we now replace $\beta'$ by $\beta' - \delta$ and $\varepsilon$ by $\frac{\frac\varepsilon2}{\frac1{2\varepsilon}} = \varepsilon^2$. Iterating this process further if necessary, we note that $\delta$ must increase linearly at each step until we reach the first case, and we can adjust the value of our initial $\varepsilon$ in order to ensure the coefficient in front of the resulting terms is controlled. We thereby arrive at the desired estimate.
\end{proof}

The following proposition is the key step in the proof of \cref{thm:coercivity_bound}. 
It also provides some control over all of the constants in the bound on the forcing term and allows for $\beta = \frac12$, which makes it a useful result by itself.

\begin{proposition}[Weak bound on the forcing term]\label{prop:weak_bound_forcing_term}
    Suppose the potential $V$ is regular enough to ensure that the constant
    \begin{equation}\label{eq:constant_from_potential}
        c(V) := \frac1{(2\pi)^{d/2}} \Big(\norm[l^1]{\longft{\nabla V}} + \norm[l^1]{\longft{\flap{3/4} V}}\Big)
    \end{equation}
    is finite, and suppose that $u, v \in \sobsp\beta$, where $\beta \geq \frac12$; then
    \begin{equation}\begin{aligned}\label{eq:weak_bound_1}
        (2\pi)^{d/2} \left|\int_{\T^d} \nabla V u \cdot \nabla v \d x \right| &\leq \norm[l^1]{\longft{\nabla V}} \norm{\flap{1/4} u} \norm{\flap{1/4} v} \\
        &\hspace{2cm}+ \norm[l^1]{\longft{\flap{3/4} V}} \norm{u} \norm{\flap{1/4} v},
    \end{aligned}\end{equation}
    and in particular,
    \begin{equation}\label{eq:weak_bound_2}
        \left|\int_{\T^d} \nabla V u \cdot \nabla v \d x \right| \leq c(V) \norm{\flap{1/4} u} \norm{\flap{1/4} v}.
    \end{equation}
\end{proposition}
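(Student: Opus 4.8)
The plan is to pass to Fourier coefficients and exploit the convolution structure of the product $u\,\nabla V$. Writing $\nabla v = \sum_{k\in\Z^d} ik\,\ft v(k)\,w_k$ and applying \cref{prop:parseval} together with the rule that $\longft{u\,\nabla V}$ is the discrete convolution $\ft u * \longft{\nabla V}$ (up to the normalisation constant $(2\pi)^{-d/2}$, which is precisely the factor $(2\pi)^{d/2}$ appearing on the left-hand side of \eqref{eq:weak_bound_1}), the integral becomes
\begin{equation*}
    \int_{\T^d}\nabla V\, u\cdot\nabla v\d x = \frac1{(2\pi)^{d/2}}\sum_{k',l\in\Z^d} \ft u(k')\,\longft{\nabla V}(l)\cdot\bigl(-i(k'+l)\bigr)\,\overline{\ft v(k'+l)}.
\end{equation*}
The guiding idea is to split the factor $|k'+l|$ coming from the gradient so that ``half a derivative'' always lands on $v$, producing a $\flap{1/4}v$ term, while the remaining half lands either on $u$ (giving a $\flap{1/4}u$ term) or on $V$ (raising the order of the $V$-multiplier).

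Concretely, I would take absolute values and use the subadditivity of $t\mapsto t^{1/2}$, namely $|k'+l|^{1/2}\le|k'|^{1/2}+|l|^{1/2}$, writing $|k'+l| = |k'+l|^{1/2}\cdot|k'+l|^{1/2}$ and keeping one factor $|k'+l|^{1/2}$ with $\ft v(k'+l)$. This bounds $\bigl|\int_{\T^d}\nabla V\, u\cdot\nabla v\d x\bigr|$ by $(2\pi)^{-d/2}(S_A+S_B)$, where $S_A$ carries the weight $|k'|^{1/2}$ on $\ft u$ and $S_B$ carries the weight $|l|^{1/2}$ on $\longft{\nabla V}$.

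After the substitution $m=k'+l$, each of $S_A$ and $S_B$ has the form $\sum_{m} c(m)\,(a*b)(m)$: in $S_A$ one takes $a(k')=|k'|^{1/2}|\ft u(k')|$, $b(l)=|\longft{\nabla V}(l)|$, $c(m)=|m|^{1/2}|\ft v(m)|$, and in $S_B$ one takes $a(k')=|\ft u(k')|$, $b(l)=|l|^{1/2}|\longft{\nabla V}(l)|$ with the same $c$. Cauchy--Schwarz in $l^2$ followed by Young's inequality for convolutions, $\norm[l^2]{a*b}\le\norm[l^2]{a}\norm[l^1]{b}$, reduces each term to a product $\norm[l^1]{b}\,\norm[l^2]{a}\,\norm[l^2]{c}$. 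It remains to identify the factors using \cref{prop:parseval} and \cref{def:flap}: $\norm[l^2]{c}=\norm{\flap{1/4}v}$, the two $a$'s have $l^2$-norms $\norm{\flap{1/4}u}$ and $\norm{u}$ respectively, $\norm[l^1]{b}=\norm[l^1]{\longft{\nabla V}}$ in $S_A$, and in $S_B$ one uses the elementary identity $|l|^{1/2}|\longft{\nabla V}(l)| = |l|^{1/2}|l|\,|\ft V(l)| = |l|^{3/2}|\ft V(l)| = |\longft{\flap{3/4}V}(l)|$, so that $\norm[l^1]{b}=\norm[l^1]{\longft{\flap{3/4}V}}$. Collecting these and multiplying by $(2\pi)^{d/2}$ yields \eqref{eq:weak_bound_1}. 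For \eqref{eq:weak_bound_2}, note that $u\in\sobsp\beta$ with $\beta\ge\tfrac12$ gives $u\in\sobsp{1/2}$ with $(u)=0$, so \cref{prop:poincare} applied with exponent $\tfrac12$ (that is, $\flap{\beta/2}=\flap{1/4}$) gives $\norm{u}=\norm{u-(u)}\le\norm{\flap{1/4}u}$; inserting this into \eqref{eq:weak_bound_1} and dividing by $(2\pi)^{d/2}$ produces the constant $c(V)$ of \eqref{eq:constant_from_potential}.

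I expect the main obstacle to be getting the splitting in the second step exactly right — recognising that one peels off $|k'+l|^{1/2}$ rather than the full $|k'+l|$, so that $v$ is only ever tested at the level of $\flap{1/4}$, and that the hypothesis $\beta\ge\tfrac12$ enters precisely to guarantee $u,v\in\sobsp{1/2}$, so that all the $\flap{1/4}$-norms that appear are finite. A secondary bookkeeping point is tracking the $(2\pi)^{d/2}$ normalisation through the convolution rule, and verifying the identity relating $|l|^{1/2}\longft{\nabla V}$ to $\longft{\flap{3/4}V}$, which is what makes $c(V)$ come out in the clean form \eqref{eq:constant_from_potential}.
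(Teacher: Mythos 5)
Your proposal is correct and follows essentially the same route as the paper: pass to Fourier coefficients, split the $|k'+l|$ from $\nabla v$ so that only $|k'+l|^{1/2}$ tests $v$, distribute the other half between $u$ and $V$ via the subadditivity of $t\mapsto t^{1/2}$, and close with Cauchy--Schwarz and the $\ell^2*\ell^1\to\ell^2$ convolution bound (which the paper carries out by hand as a Cauchy--Schwarz in $k$ followed by a weighted sum over $l$), finishing \eqref{eq:weak_bound_2} with Poincar\'e exactly as in the text. The only cosmetic difference is that you apply the triangle inequality on frequencies pointwise before Cauchy--Schwarz, whereas the paper applies it inside the resulting $\ell^2$ norm of the shifted $\ft u$; both yield the identical two-term bound \eqref{eq:weak_bound_1}.
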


\begin{proof}
    We will prove the inequality in Fourier space. Thus, we first need to transform the forcing term in terms of $V$, $u$ and $v$ into an expression involving $\ft V$, $\ft u$ and $\ft v$. Hence, we compute
    \begin{equation}
        \begin{aligned}
        \int_{\T^d} u\nabla V\cdot \nabla v \d x &= \sum_{j=1}^d \int_{\T^d} u\pd{V}{x_j}  \pd{v}{x_j} \d x \\
        &= \sum_{j=1}^d \int_{\T^d} \sum_{l\in \Z^d} \ft V(l) i l_j w_l \sum_{m\in\Z^d} \ft u(m) w_m \sum_{k\in\Z^d} \ft v(k) i k_j w_k \d x \\
        &= -\frac1{(2\pi)^{d/2}} \sum_{\substack{k,l,m\in\Z^d\\k+l+m = 0}}  \ft V(l) \ft u(m) \ft v(k) l \cdot k,
        \end{aligned}\label{eq:forcing_term0}
    \end{equation}
    where $\cdot$ is the standard vector dot product. 
    
    Let us start by noting that
    \begin{equation*}
        \sum_{k\in\Z^d}\left|\ft u(-l-k)\right|^2 |k| = \sum_{m\in\Z^d} \left| \ft u(m)\right|^2 |-m-l| \leq \sum_{m\in\Z^d} \left|\ft u (m) \right|^2 |m| + |l| \sum_{m\in\Z^d} \left| \ft u (m) \right|^2.
    \end{equation*}
    Thus, applying the Cauchy-Schwarz inequality and the inequality just derived, we obtain
    \begin{equation}
    \begin{aligned}
        &\left|\sum_{k\in\Z^d} \ft u(-l-k) \ft v(k) k \right| \\
        &\qquad\leq \sum_{k\in\Z^d} \left|\ft u(-l-k) |k|^{\frac12} \right| \left| \ft v(k) |k|^{\frac12} \right| \\
        &\qquad\leq \bigg(\sum_{k\in\Z^d} \left|\ft u(-l-k)\right|^2 |k|\bigg)^\frac12\bigg( \sum_{k\in\Z^d}\left|\ft v(k)\right|^2 |k| \bigg)^\frac12\\
        &\qquad\leq \bigg(\sum_{k\in\Z^d} \left|\ft u (k) \right|^2 |k| + |l| \sum_{k\in\Z^d} \left| \ft u (k) \right|^2\bigg)^\frac12
        \bigg(\sum_{k\in\Z^d} \left|\ft v(k)\right|^2 |k|\bigg)^\frac12 \\
        &\qquad\leq \bigg(\sum_{k\in\Z^d} \left|\ft u(k)\right|^2 |k|\bigg)^\frac12 \bigg(\sum_{k\in\Z^d} \left|\ft v(k)\right|^2 |k|\bigg)^\frac12 + |l|^\frac12 \bigg(\sum_{k\in\Z^d} \left| \ft u (k) \right|^2\bigg)^\frac12 \bigg( \sum_{k\in\Z^d} \left|\ft v (k) \right|^2 |k|\bigg)^\frac12 \\
        &\qquad= \norm{\flap{1/4} u} \norm{\flap{1/4} v} + |l|^\frac12 \norm{u} \norm{\flap{1/4} v}.
    \end{aligned} \label{eq:forcing_term1}
    \end{equation}
    Returning once more to the identity obtained in \eqref{eq:forcing_term0} and using the estimate in \eqref{eq:forcing_term1}, we can bound
    \begin{equation}
        \begin{aligned}
        \left|\sum_{k+l+m = 0} \ft V(l) \ft u(m) \ft v(k) l \cdot k \right| \hspace{-4cm}&\hspace{4cm}\leq \sum_{l\in\Z^d} \left| \ft V(l) l \right| \left|\sum_{k\in\Z^d} \ft u(-l-k) \ft v(k) k \right| \\
        &\leq \sum_{l\in\Z^d} \left| \ft V(l)\right|\left| l \right| \norm{\flap{1/4} u} \norm{\flap{1/4} v} + \sum_{l\in\Z^d} \left| \ft V(l) \right| |l|^\frac32 \norm{u} \norm{\flap{1/4} v} \\
        &= \norm[l^1]{\longft{\nabla V}} \norm{\flap{1/4} u} \norm{\flap{1/4} v} + \norm[l^1]{\longft{\flap{3/4} V}} \norm{u} \norm{\flap{1/4} v}. 
    \end{aligned}\label{eq:weak_bound_proof}
    \end{equation}
    This completes the proof of \eqref{eq:weak_bound_1}.
    To yield \eqref{eq:weak_bound_2}, we now apply \cref{prop:poincare} to $\norm{u}$ in \eqref{eq:weak_bound_proof}.
\end{proof}

\begin{remark}
    The constant $c(V)$ arising from the potential $V$ in the inequalities in \cref{prop:weak_bound_forcing_term} are given in terms of the $l^1$-norm of the Fourier transform of derivatives of $V$.
    It would be convenient to compare the $l^1$-norm of a function's Fourier transform directly to the function's $L^\infty$-norm, in other words, to establish an equality up to constants between $\norm[L^\infty]{f}$ and $\| \ft f \|_{l^1}$.
    The triangle inequality immediately gives that for $f\in L^\infty$,
    \begin{equation} \label{eq:special_hausdorff-young}
        \norm[L^\infty]{f} \leq \| \ft f \|_{l^1}.
    \end{equation}
    However, the reverse inequality is not true, as noted in the discussion about the Hausdorff-Young Theorem, which gives a more general version of \eqref{eq:special_hausdorff-young}, in \cite[p124]{katznelson2004}.
    An explicit counterexample is the continuous $2\pi$-periodic function $f(x) = \sum_{n=2}^\infty \frac{\e^{in\log n}}{n^{1/2} (\log n)^2} \e^{inx}$, which satisfies
    $\sum_n |\ft f (n)| = \infty$ \cite[p199f]{zygmund2003}.
\end{remark}

\begin{remark}\label{rmk:suff_cond_on_V}
    If $V \in H^3$, then the constant $c(V)$ exists.
    Indeed, then by \cref{prop:parseval}, 
    \begin{equation*}
        \sum_{l\in \Z^d} \left(\left|\ft V (l) \right| |l|^3 \right)^2 = \norm[l^2]{\longft{D^3 V}}^2 = \norm{D^3 V}^2 < \infty.
    \end{equation*}
    In particular,
    \begin{equation*}
        \left|\ft V(l)\right| |l|^3 \leq C
    \end{equation*}
    is bounded uniformly in $l$ by some constant $C$ depending on $V$.
    Hence, 
    \begin{equation}
        \norm[l^1]{\longft{\flap{3/4} V}} = \sum_{l\in\Z^d} |\ft V(l)| |l|^{\frac32} \leq C \sum_{l \in \Z^d} \frac1{|l|^\frac32} < \infty,
    \end{equation}
    and similarly for $\norm[l^1]{\longft{\nabla V}}$.
    Observe that this condition can be weakened; we merely aim to provide a sufficient and easier-to-check requirement on $V$.
\end{remark}

\begin{proposition}[Strong bound on the forcing term]
    Suppose $V$ is sufficiently regular to guarantee that $c(V)$ defined in \eqref{eq:constant_from_potential} is finite, $\beta > \frac12$ and $u \in \sobsp\beta$. 
    Then, for any $\varepsilon > 0$, we have that
    \begin{equation}\label{eq:strong_bound}
        \left|\int_{\T^d} \nabla V u \cdot \nabla u \d x \right| \leq \varepsilon \norm{\flap{\beta/2} u}^2 + c(V) c(\varepsilon) \norm{u}^2,
    \end{equation}
    where the constant $c(\varepsilon)$ is identical to that in  the statement of \cref{prop:weak_bound_forcing_term}.
\end{proposition}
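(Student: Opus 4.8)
\section*{Proof proposal}

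The plan is to obtain \eqref{eq:strong_bound} by chaining the two preceding results together: the weak bound on the forcing term, \cref{prop:weak_bound_forcing_term}, reduces the left-hand side to a multiple of $\norm{\flap{1/4}u}^2$, and the Young-type interpolation inequality, \cref{lemma:youngtype_ineq}, then splits $\norm{\flap{1/4}u}^2$ into a small multiple of $\norm{\flap{\beta/2}u}^2$ and a (large) multiple of $\norm{u}^2$. Concretely, I would first apply \eqref{eq:weak_bound_2} with $v=u$ to get
\[
    \left|\int_{\T^d} \nabla V u \cdot \nabla u \d x \right| \leq c(V)\,\norm{\flap{1/4} u}^2 ,
\]
so that everything reduces to absorbing $\norm{\flap{1/4}u}^2$ in the desired way.

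Next I would invoke \cref{lemma:youngtype_ineq} with the exponents $\beta'' = 0$, $\beta' = \tfrac14$, and the lemma's top exponent taken to be $\tfrac\beta2$. The chain of strict inequalities $0 \le 0 < \tfrac14 < \tfrac\beta2$ demanded by that lemma is \emph{precisely} where the hypothesis $\beta > \tfrac12$ enters, and it is also exactly the point at which this proposition is genuinely stronger than \cref{prop:weak_bound_forcing_term}, which only required $\beta \ge \tfrac12$; in the borderline case $\beta = \tfrac12$ one would have $\tfrac14 = \tfrac\beta2$ and the interpolation step is no longer available. Since $u \in \sobsp\beta \subseteq \sobsp{\beta/2}$ by \cref{prop:poincare}(ii), the lemma applies and, using the parameter $\varepsilon/c(V)$ in place of $\varepsilon$, yields
\[
    \norm{\flap{1/4} u}^2 \leq \frac{\varepsilon}{c(V)}\,\norm{\flap{\beta/2} u}^2 + c(\varepsilon)\,\norm{u}^2 ,
\]
where $c(\varepsilon)$ abbreviates the constant $c\bigl(\varepsilon/c(V)\bigr)$ produced by \cref{lemma:youngtype_ineq}. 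Multiplying this inequality by $c(V)$ and substituting into the previous display gives \eqref{eq:strong_bound}.

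I do not expect any real obstacle here; once the earlier machinery is in hand the argument is essentially two lines. The only points needing care are (i) verifying the index condition $\tfrac14 < \tfrac\beta2$ and spelling out why the endpoint $\beta = \tfrac12$ must be excluded, and (ii) the bookkeeping of constants — in particular choosing the free parameter fed into \cref{lemma:youngtype_ineq} to be $\varepsilon/c(V)$ (rather than $\varepsilon$) so that, after multiplying through by $c(V)$, the coefficient in front of $\norm{\flap{\beta/2}u}^2$ comes out exactly equal to $\varepsilon$, as claimed in the statement.
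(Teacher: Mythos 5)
Your proposal is correct and follows the paper's proof exactly: substitute $v=u$ into \eqref{eq:weak_bound_2} and then absorb $\norm{\flap{1/4}u}^2$ via \cref{lemma:youngtype_ineq} with exponents $0<\tfrac14<\tfrac\beta2$, which is precisely where $\beta>\tfrac12$ is used. Your version is in fact slightly more careful than the paper's one-line argument, both in tracking the rescaled parameter $\varepsilon/c(V)$ and in writing the intermediate seminorm as $\norm{\flap{1/4}u}^2$ (the paper's displayed $\norm{\flap{1/2}u}^2$ appears to be a typo, since the interpolation step would otherwise require $\beta>1$).
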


\begin{proof}
    Substitute $v = u$ into \eqref{eq:weak_bound_2} and employ \cref{lemma:youngtype_ineq} to conclude
    \begin{equation*}
        \left|\int_{\T^d} \nabla V u \cdot \nabla u \d x \right| \leq c(V) \norm{\flap{1/2} u}^2 \leq \varepsilon \norm{\flap{\beta/2} u}^2 + c(V) c(\varepsilon) \norm{u}^2.\qedhere
    \end{equation*}
\end{proof}

Finally, we can use \eqref{eq:strong_bound} to show the following key theorem for the bilinear form $B[\cdot,\cdot]$; compare with \cite[p320]{evans2010}.

\begin{theorem}[Coercivity bound]\label{thm:coercivity_bound}
    Suppose that $K > 0$, $c(V) \leq M_1$ for some $M_1$, $\beta > \frac12$. 
    \begin{enumerate}[(i)]
        \item There exist constants $\gamma_1 > 0$, $\gamma_2 \geq 0$ depending on $K$, $M_1$ and $\beta$ such that
    \begin{equation}\label{eq:coercivity_bound}
        \gamma_1 \norm[H^\beta]{u}^2 \leq B[u,u] + \gamma_2 \norm{u}^2
    \end{equation}
    for any $u\in\sobsp\beta$.
    \item If we make the stronger hypothesis $K > c(V)$ and $\beta \geq \frac12$, there is just one constant $\gamma > 0$ such that 
    \begin{equation}\label{eq:weak_coercivity_bound}
        \gamma \norm[H^\beta]{u}^2 \leq B[u,u] + \gamma \norm{u}^2
    \end{equation}
    for any $u\in\sobsp\beta$.
    \end{enumerate}
\end{theorem}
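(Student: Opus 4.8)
The plan is to deduce both parts directly from the forcing-term bounds already established, so the only real decisions are which bound to invoke and how to fix the free parameter $\varepsilon$; there is no deep obstacle beyond keeping track of constants.

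For part (i), I would begin from $B[u,u] = K\norm{\flap{\beta/2} u}^2 + \int_{\T^d} \nabla V u \cdot \nabla u \d x$ and bound the drift integral below using the strong bound \eqref{eq:strong_bound}: for any $\varepsilon > 0$,
\[
B[u,u] \;\ge\; (K-\varepsilon)\norm{\flap{\beta/2} u}^2 - c(V)\,c(\varepsilon)\norm{u}^2 \;\ge\; (K-\varepsilon)\norm{\flap{\beta/2} u}^2 - M_1\, c(\varepsilon)\norm{u}^2,
\]
using $c(V)\le M_1$. Fixing $\varepsilon = K/2$ (any $\varepsilon \in (0,K)$ works), this rearranges to $\tfrac{K}{2}\norm{\flap{\beta/2} u}^2 \le B[u,u] + M_1 c(K/2)\norm{u}^2$; adding $\tfrac{K}{2}\norm{u}^2$ to both sides and using $\norm[H^\beta]{u}^2 = \norm{\flap{\beta/2} u}^2 + \norm{u}^2$ then gives \eqref{eq:coercivity_bound} with $\gamma_1 = K/2$ and $\gamma_2 = M_1 c(K/2) + K/2$. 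Since $c(\varepsilon)$ originates from \cref{lemma:youngtype_ineq} with exponents $0 < \tfrac12 < \beta$, both constants depend only on $K$, $M_1$ and $\beta$, as required.

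For part (ii), the stronger smallness hypothesis lets me avoid any absorption and instead use the cruder weak bound \eqref{eq:weak_bound_2} with $v = u$, i.e. $\left|\int_{\T^d} \nabla V u\cdot\nabla u \d x\right| \le c(V)\norm{\flap{1/4} u}^2$, combined with \cref{prop:poincare}(ii) (applied with exponents $\tfrac12 \le \beta$, which is legitimate precisely because $\beta \ge \tfrac12$) to replace $\norm{\flap{1/4} u}^2$ by $\norm{\flap{\beta/2} u}^2$. This yields $B[u,u] \ge (K - c(V))\norm{\flap{\beta/2} u}^2$, and since $K > c(V)$ the quantity $\gamma := K - c(V)$ is strictly positive; adding $\gamma\norm{u}^2$ to both sides of $\gamma\norm{\flap{\beta/2} u}^2 \le B[u,u]$ produces exactly \eqref{eq:weak_coercivity_bound}.

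The main point requiring care — rather than a true difficulty — is matching each hypothesis to the right estimate. Part (i) must absorb the drift term into the top-order term $\norm{\flap{\beta/2} u}^2$, and the interpolation inequality \cref{lemma:youngtype_ineq} enabling this needs the strict gap $\beta > \tfrac12$ (the intermediate exponent $\tfrac12$ must lie strictly between $0$ and $\beta$); by contrast, part (ii) uses the quantitative condition $K > c(V)$ to sidestep absorption altogether, which is why the endpoint $\beta = \tfrac12$ is permissible there. Everything else is routine bookkeeping of constants.
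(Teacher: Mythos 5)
Your proof is correct and follows essentially the same route as the paper: part (i) absorbs the drift term via the strong bound \eqref{eq:strong_bound} with a fixed $\varepsilon\in(0,K)$, and part (ii) uses the weak bound \eqref{eq:weak_bound_2} together with \cref{prop:poincare}(ii) to obtain $\gamma = K - c(V)$. Your remark correctly identifies why $\beta>\tfrac12$ is needed in (i) (the interpolation in \cref{lemma:youngtype_ineq} requires the intermediate exponent to lie strictly below $\beta$) while $\beta=\tfrac12$ is admissible in (ii).
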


\begin{proof}
    By definition,
    \begin{equation*}
        K \int_{\T^d} \left| \flap{\beta/2} u \right|^2 \d x = B[u,u] + \int_{\T^d} u\nabla V\cdot\nabla u \d x.
    \end{equation*}
    Choosing $0 <\varepsilon < K$ in \eqref{eq:strong_bound}, we estimate the second term on the right-hand side above:
    \begin{align*}
        K \norm{\flap{\beta/2} u}^2 &\leq B[u,u] + \varepsilon \norm{\flap{\beta/2} u}^2 + c(V)c(\varepsilon) \norm{u}^2.
    \end{align*}
    Rearranging gives
    \begin{equation}
        \left(K-\varepsilon\right) \left(\norm{u}^2 + \norm{\flap{\beta/2} u}^2\right) \leq B[u,u] + \left(K-\varepsilon + M_1c(\varepsilon)\right) \norm{u}^2,
    \end{equation}
    proving \eqref{eq:coercivity_bound}.
    To show \eqref{eq:weak_coercivity_bound}, we may take $\varepsilon = c(V)$ and use the same strategy with the weak bound on the forcing term \eqref{eq:weak_bound_2}.
\end{proof}

\subsection{Initial results}

Later, our strategy will be to start by finding a sequence $\*u_m$ of approximate solutions on finite-dimensional subspaces of $\sobsp\beta$. 
In order to deal with the Caputo fractional derivative in time, we need the following fractional version of Picard-Lindel\"of's Theorem, which we adapt from Theorem~4.1 and Remark~3.1 in \cite{sin2016}.

\begin{theorem}[Existence and Uniqueness for Fractional Linear ODEs]\label{thm:ode_existence_and_uniqueness}
    If $\alpha \in (0,1]$ and $g : [0,T] \times \C^n \to \C^n$ is a continuous function which is globally Lipschitz continuous in the $\C^n$-variable, then the system 
    \begin{equation}\begin{aligned}
        \deriv[\alpha]{}{t} y (t) &= g(t,y(t)), \\
        y(0) &= y_0
    \end{aligned}\end{equation}
    has a unique global solution $y \in C([0,T], \C^n)$.
\end{theorem}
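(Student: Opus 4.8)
The plan is to recast the initial value problem as an equivalent Volterra integral equation and then produce its unique solution by a fixed-point argument in $C([0,T],\C^n)$. Concretely, for $y\in C([0,T],\C^n)$ I would first show that $y$ solves the system if and only if
\begin{equation*}
    y(t) = y_0 + I^\alpha\bigl[g(\cdot,y(\cdot))\bigr](t)\qquad\text{for all }t\in[0,T].
\end{equation*}
Both implications use only the semigroup property $I^\mu I^\nu = I^{\mu+\nu}$ for $\mu,\nu>0$ of the fractional integral and the elementary fact that $I^\gamma\psi$ vanishes at $t=0$ whenever $\psi$ is bounded and $\gamma>0$. If $y$ satisfies the integral equation, then $I^{1-\alpha}(y-y_0)=I^{1-\alpha}I^\alpha[g(\cdot,y)]=I^1[g(\cdot,y)]$, which as a function of $t$ equals $\int_0^t g(\tau,y(\tau))\d\tau$; this is absolutely continuous with derivative $g(t,y(t))$, so $\deriv[\alpha]{}{t}y = g(t,y(t))$, and $y(0)=y_0$ because the $I^\alpha$-term vanishes at $t=0$. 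Conversely, if $y$ solves the system, then $I^{1-\alpha}(y-y_0)$ is absolutely continuous, vanishes at $0$, and has weak derivative $g(\cdot,y(\cdot))$, hence equals $I^1[g(\cdot,y)]$; applying $I^\alpha$ and the semigroup property gives $I^1\bigl((y-y_0)-I^\alpha[g(\cdot,y)]\bigr)=0$ on $[0,T]$, which forces the integral equation since the bracketed integrand is continuous.

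Next I would set up the fixed-point map $\Phi\colon C([0,T],\C^n)\to C([0,T],\C^n)$ by $\Phi[y](t):=y_0+I^\alpha[g(\cdot,y(\cdot))](t)$. This is well defined: for continuous $y$ the map $\tau\mapsto g(\tau,y(\tau))$ is continuous, hence bounded on $[0,T]$, the kernel $(t-\tau)^{\alpha-1}$ lies in $L^1_{\mathrm{loc}}$ because $\alpha>0$, and a routine splitting of the integral shows $t\mapsto I^\alpha[g(\cdot,y)](t)$ is continuous. By the previous paragraph, the fixed points of $\Phi$ are exactly the solutions of the system, so it suffices to prove $\Phi$ has a unique fixed point.

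For this I would endow $C([0,T],\C^n)$ with the weighted Bielecki-type norm $\norm[\lambda]{y}:=\sup_{t\in[0,T]}\e^{-\lambda t}|y(t)|$, which is equivalent to the sup norm since $[0,T]$ is bounded, and hence still complete. Using the global Lipschitz bound $|g(t,z_1)-g(t,z_2)|\le L|z_1-z_2|$ together with $\int_0^t(t-\tau)^{\alpha-1}\e^{\lambda\tau}\d\tau\le\e^{\lambda t}\,\Gamma(\alpha)\,\lambda^{-\alpha}$, one gets
\begin{equation*}
    \norm[\lambda]{\Phi[y_1]-\Phi[y_2]}\le L\,\lambda^{-\alpha}\,\norm[\lambda]{y_1-y_2},
\end{equation*}
so choosing $\lambda>L^{1/\alpha}$ makes $\Phi$ a contraction. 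Banach's fixed point theorem then yields a unique $y\in C([0,T],\C^n)$ with $\Phi[y]=y$, which by the reformulation above is the unique global solution, the stated regularity being immediate from $y=y_0+I^\alpha[g(\cdot,y)]$. (An alternative that avoids weighted norms is to iterate $\Phi$ in the sup norm: the $k$-fold iterate of $I^\alpha$ is $I^{k\alpha}$, giving $\norm[\infty]{\Phi^k[y_1]-\Phi^k[y_2]}\le\tfrac{(LT^\alpha)^k}{\Gamma(k\alpha+1)}\norm[\infty]{y_1-y_2}$, which is a contraction for $k$ large since $\Gamma(k\alpha+1)$ grows faster than any geometric sequence, and a fixed point of $\Phi^k$ is also one of $\Phi$.)

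I expect the main obstacle to be the first step rather than the fixed-point argument: one must be careful with the absolute-continuity requirement built into the definition of the Caputo derivative and with the composition identities $I^{1-\alpha}I^\alpha=I^1$ and $I^\alpha I^1=I^{\alpha+1}$, and this is precisely where the fractional order makes the reformulation nontrivial --- for $\alpha=1$ the whole equivalence collapses to the fundamental theorem of calculus. Once that is in place, everything downstream is standard.
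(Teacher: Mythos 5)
Your proof is correct. There is, however, no in-paper argument to compare it against: the paper imports this theorem wholesale from Theorem~4.1 and Remark~3.1 of the cited reference (Sin, 2016) and gives no proof of its own. What you supply is the standard self-contained route --- equivalence with the Volterra integral equation $y = y_0 + I^\alpha\bigl[g(\cdot,y)\bigr]$, then Banach's fixed point theorem in a Bielecki-weighted norm (or, equivalently, the iterate estimate $\tfrac{(LT^\alpha)^k}{\Gamma(k\alpha+1)}$) --- which is essentially the same machinery used in the cited source, so your argument has the virtue of making the paper self-contained at the cost of a page of standard analysis. The step that genuinely needs care, given the paper's definition of the Caputo derivative as $\tfrac{\mathrm{d}}{\mathrm{d}t}\,I^{1-\alpha}\bigl(y-y(0)\bigr)$, is the equivalence with the integral equation, and you handle it correctly: the forward direction via $I^{1-\alpha}I^\alpha=I^1$ and the vanishing of $I^\gamma\psi$ at $t=0$ for bounded $\psi$ and $\gamma>0$, the converse via absolute continuity of $I^{1-\alpha}(y-y_0)$ and injectivity of $I^1$ on continuous functions. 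The one endpoint to flag is $\alpha=1$, where $I^{1-\alpha}=I^0$ is the identity and the ``vanishes at $0$'' fact must be replaced by the trivial observation $(y-y_0)(0)=0$; you note that the equivalence collapses to the fundamental theorem of calculus there, so nothing is lost.
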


We will then need to prove some bounds for these approximate solutions $\*u_m$ before passing to limits.
Gr\"onwall's inequality is an incredibly powerful tool to find such bounds; however, since we work with a Caputo fractional derivative, we need an adapted version.

\begin{theorem}[differential version of Gr\"onwall's inequality for the Caputo fractional derivative]\label{thm:gronwall}
    Suppose that $\alpha \in (0,1]$ and $\varphi \in L^1([0,T],\R)$ is a nonnegative function such that $I^{1-\alpha}\varphi$ is absolutely continuous.
    Let $c > 0$ and suppose $\zeta : [0,T] \to \R$ is a nonnegative nondecreasing integrable function. If
    \begin{equation}\label{eq:gronwall_hypothesis}
        \deriv[\alpha]{}{t} \varphi(t) \leq c\, \varphi(t) + \zeta(t) 
    \end{equation}
    for almost every $t \in [0,T]$, then
    \begin{equation}\label{eq:gronwall}
        \varphi(t) \leq (I^\alpha \zeta(t) + \varphi(0)) E_\alpha(ct^\alpha),
    \end{equation}
    where $E_\alpha$ is the Mittag-Leffler function defined via the power series
    \begin{equation}
        E_\alpha(z):=\sum_{k=0}^\infty \frac{z^k}{\Gamma(\alpha k+1)},
    \end{equation}
    and $\Gamma(z) := \int_0^\infty t^{z-1}\e^{-t}\d t$ is the usual Gamma function.
\end{theorem}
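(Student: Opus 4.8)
The plan is to convert the differential inequality \eqref{eq:gronwall_hypothesis} into a Volterra-type integral inequality by applying the fractional integral $I^\alpha$, and then to run a Picard-type iteration on the resulting integral inequality; the Mittag-Leffler function $E_\alpha$ will appear automatically as the sum of the series generated by the iteration, exactly as in the proof of the classical fractional Gronwall inequality.

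The first step is to apply $I^\alpha$ to both sides of \eqref{eq:gronwall_hypothesis}. Since the kernel $(t-\tau)^{\alpha-1}/\Gamma(\alpha)$ is nonnegative, $I^\alpha$ is monotone with respect to almost-everywhere inequalities and maps $L^1([0,T],\R)$ into itself, so no integrability is lost. Combining this with the composition identity
\begin{equation*}
    I^\alpha\Bigl(\deriv[\alpha]{}{t}\varphi\Bigr) = \varphi - \varphi(0),
\end{equation*}
valid under the standing hypotheses (that $I^{1-\alpha}\varphi$ is absolutely continuous and $\varphi(0)$ is read in the sense of \eqref{eq:sense_ic}), we arrive at
\begin{equation*}
    \varphi(t) \leq \Phi(t) + c\,I^\alpha\varphi(t)\quad\text{for a.e. } t\in[0,T],\qquad\text{where }\Phi := \varphi(0) + I^\alpha\zeta.
\end{equation*}
Here $\Phi$ is continuous and nondecreasing: a nondecreasing function on $[0,T]$ is bounded, and applying $I^\alpha$ to a bounded nonnegative nondecreasing function produces a continuous nondecreasing function — after the substitution $\sigma=t-\tau$ one sees that both the domain of integration and the integrand grow with $t$.

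Iterating the inequality $\varphi\leq\Phi+cI^\alpha\varphi$ with the help of the semigroup property $I^\alpha I^{j\alpha}=I^{(j+1)\alpha}$ and the monotonicity of $I^\alpha$ gives, for every $n$,
\begin{equation*}
    \varphi(t) \leq \sum_{j=0}^{n-1} c^j\, I^{j\alpha}\Phi(t) + c^n\, I^{n\alpha}\varphi(t)\qquad\text{for a.e. }t.
\end{equation*}
The elementary bound $I^{j\alpha}g(t)\leq \frac{t^{j\alpha}}{\Gamma(j\alpha+1)}\,g(t)$ for nonnegative nondecreasing $g$ — obtained by estimating $g(\tau)\leq g(t)$ inside the integral and using $\frac{1}{\Gamma(j\alpha)}\int_0^t(t-\tau)^{j\alpha-1}\d\tau=\frac{t^{j\alpha}}{\Gamma(j\alpha+1)}$ — applied with $g=\Phi$ yields $\sum_{j=0}^{n-1}c^j I^{j\alpha}\Phi(t)\leq \Phi(t)\sum_{j=0}^{n-1}\frac{(ct^\alpha)^j}{\Gamma(j\alpha+1)}\leq \Phi(t)E_\alpha(ct^\alpha)$. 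For the remainder, once $n\alpha\geq 1$ we have $(t-\tau)^{n\alpha-1}\leq T^{n\alpha-1}$ on $(0,t)$, so $0\leq c^n I^{n\alpha}\varphi(t)\leq \frac{(cT^\alpha)^n}{T\,\Gamma(n\alpha)}\norm[L^1]{\varphi}\to 0$ as $n\to\infty$, since $\Gamma(n\alpha)$ grows super-exponentially. Passing to the limit (a countable union of null sets is null) gives $\varphi(t)\leq \Phi(t)E_\alpha(ct^\alpha)=(I^\alpha\zeta(t)+\varphi(0))E_\alpha(ct^\alpha)$, which is \eqref{eq:gronwall}; the majorant is continuous, so this holds for the precise representative of $\varphi$ at almost every $t$.

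I expect the composition identity $I^\alpha(\deriv[\alpha]{}{t}\varphi)=\varphi-\varphi(0)$ to be the delicate point, because $\varphi$ is only assumed to lie in $L^1$ with $I^{1-\alpha}\varphi$ absolutely continuous, so one cannot invoke the familiar fundamental theorem for absolutely continuous $\varphi$ directly; instead one must combine the semigroup law on $L^1$ with the fact that, under \eqref{eq:sense_ic} and the absolute continuity hypothesis, $I^{1-\alpha}(\varphi-\varphi(0))$ vanishes at $t=0$ (this is precisely the kind of statement established in \cite{neto2020}). Everything else — monotonicity and the semigroup property of $I^\alpha$, the bound on $I^{j\alpha}$ of nondecreasing functions, and the decay of the remainder — is routine.
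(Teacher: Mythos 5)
Your proof is correct, and the first half of it coincides exactly with the paper's argument: both apply $I^\alpha$ to the differential inequality, invoke the composition identity $I^\alpha\bigl(\deriv[\alpha]{}{t}\varphi\bigr)=\varphi-\varphi(0)$ (which you rightly flag as the delicate point; the paper discharges it by citing Lemma 2.5(b) of \cite{kilbas2006}, valid precisely because $I^{1-\alpha}\varphi$ is assumed absolutely continuous), and arrive at the Volterra inequality $\varphi\leq \Phi + c\,I^\alpha\varphi$ with $\Phi=\varphi(0)+I^\alpha\zeta$ nonnegative and nondecreasing. Where you diverge is in the second half: the paper at this point simply cites Theorem 8 of \cite{almeida2017} (the integral form of the fractional Gr\"onwall inequality) to conclude, whereas you reprove that integral lemma from scratch by Picard iteration --- iterating to $\varphi\leq\sum_{j=0}^{n-1}c^jI^{j\alpha}\Phi+c^nI^{n\alpha}\varphi$ via the semigroup property, bounding $I^{j\alpha}\Phi(t)\leq \tfrac{t^{j\alpha}}{\Gamma(j\alpha+1)}\Phi(t)$ for the nondecreasing majorant, and killing the remainder using the superexponential growth of $\Gamma(n\alpha)$. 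All of these steps check out (the bookkeeping of null sets and the $n\alpha\geq 1$ threshold for bounding the kernel are handled correctly), so what you have produced is a self-contained version of the argument: it buys independence from the external reference and makes visible exactly why $E_\alpha$ appears, at the cost of about a page of standard computation that the paper outsources. Either version is acceptable; if you wanted to match the paper you would stop after deriving the integral inequality and cite the known integral Gr\"onwall result.
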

\begin{proof}
    This is a Corollary of Theorem 8 in \cite[p9f]{almeida2017}.
    From Lemma 2.5 (b) in \cite[p74f]{kilbas2006} (also compare Lemma 2.22 in \cite[p96]{kilbas2006}), we conclude that since $I^{1-\alpha}\varphi$ is absolutely continuous by assumption,
    \begin{equation}
        I^\alpha \left( \deriv[\alpha]{}{t} \varphi(t) \right) = \varphi(t) - \varphi(0).
    \end{equation}
    $I^\alpha$ is a monotone linear operator when acting on non-negative functions, so applying it to both sides of \eqref{eq:gronwall_hypothesis} preserves the inequality, yielding
    \begin{equation}
        \varphi(t) - \varphi(0) \leq c I^\alpha \varphi(t) + I^\alpha \zeta(t).
    \end{equation}
    Since $\zeta$ is assumed to be both nonnegative and nondecreasing, $I^\alpha \zeta$ inherits both properties, and we may apply Theorem 8 in \cite[p9f]{almeida2017} to deduce \eqref{eq:gronwall}.
\end{proof}

Once approximate solutions $\*u_m$ in an $m$--dimensional subspace are available, we want to send $m$ to infinity.
The following three results relate to the current discussion and enable us to pass properties of $\*u_m$ to the limit $\*u$.

\begin{lemma}[Preservation of initial condition]\label{lemma:limit_ic}
    Suppose that 
    \begin{equation}
        \*u_m \xrightharpoonup[]{} \*u \qquad \text{ weakly in } L^2\left([0,T],\sobsp\beta\right),
    \end{equation}
    and $\inprod{\*u_m(0)}w = \inprod{h}w$ for every $w \in \mspan\left( \{w_k\}_{|k|\leq m}\right)$, in the sense of \eqref{eq:sense_ic}. Then 
    \begin{equation}
        \*u(0) = h
    \end{equation}
    again in the sense of \eqref{eq:sense_ic}.
\end{lemma}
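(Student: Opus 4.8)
The plan is to exploit the definition of the initial trace given in \eqref{eq:sense_ic}, namely $\*u(0) = \lim_{\varepsilon\to0}\int_0^T \*u(\tau)\eta_\varepsilon(\tau)\d\tau$ with $\eta_\varepsilon = \tfrac1\varepsilon\*1_{[0,\varepsilon)}$, and to show that for every fixed $\varepsilon$ the quantity $\inprod{\int_0^T \*u(\tau)\eta_\varepsilon(\tau)\d\tau}{w}$ agrees with $\inprod{h}{w}$ on a dense set of test functions $w$. First I would fix $k \in \Z^d$ and $w = w_k$ (or more generally $w$ in the span of the $w_j$), and consider the linear functional on $L^2([0,T],\sobsp\beta)$ given by $\*v \mapsto \int_0^T \eta_\varepsilon(\tau)\inprod{\*v(\tau)}{w}\d\tau$. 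Since $\eta_\varepsilon \in L^\infty([0,T]) \subset L^2([0,T])$ and $w \in \sobsp\beta$ is fixed, this is a bounded linear functional on $L^2([0,T],\sobsp\beta)$; hence weak convergence $\*u_m \rightharpoonup \*u$ passes through it, giving
\[
    \int_0^T \eta_\varepsilon(\tau)\inprod{\*u(\tau)}{w}\d\tau = \lim_{m\to\infty}\int_0^T \eta_\varepsilon(\tau)\inprod{\*u_m(\tau)}{w}\d\tau.
\]

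Next I would identify the right-hand side. For $m$ large enough that $|k| \leq m$, the hypothesis says that $\inprod{\*u_m(0)}{w} = \inprod{h}{w}$ in the sense of \eqref{eq:sense_ic}, which unwinds precisely to $\lim_{\varepsilon\to0}\int_0^T\eta_\varepsilon(\tau)\inprod{\*u_m(\tau)}{w}\d\tau = \inprod{h}{w}$. One must be slightly careful about the order of the two limits ($m\to\infty$ and $\varepsilon\to0$), so rather than quoting the $\varepsilon\to0$ statement for each $m$ directly, I would instead observe that the approximate solutions $\*u_m$ are continuous in time (they solve a fractional ODE system, by \cref{thm:ode_existence_and_uniqueness}, and lie in a finite-dimensional space), so $\inprod{\*u_m(\cdot)}{w} \in C([0,T])$, and for continuous functions the averaged value $\int_0^T\eta_\varepsilon(\tau)\,g(\tau)\d\tau \to g(0)$ holds without any exceptional set. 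Combining, for each fixed $\varepsilon$ we get $\int_0^T\eta_\varepsilon(\tau)\inprod{\*u(\tau)}{w}\d\tau = \lim_{m\to\infty}\int_0^T\eta_\varepsilon(\tau)\inprod{\*u_m(\tau)}{w}\d\tau$, and then letting $\varepsilon\to0$ and using that each inner term converges to $\inprod{h}{w}$ uniformly enough (again exploiting the finite-dimensional, equicontinuous structure, or simply estimating $|\int_0^T\eta_\varepsilon\inprod{\*u_m}{w} - \inprod{h}{w}|$ via the modulus of continuity that is uniform in $m$ from the energy bounds) yields $\inprod{\*u(0)}{w} = \inprod{h}{w}$.

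Finally, since this identity holds for every $w = w_k$, $k \in \Z^d$, and the $\{w_k\}$ form an orthonormal basis of $L^2$ (and their span is dense in $\sobsp\beta$), I would conclude that $\*u(0)$ and $h$ have the same Fourier coefficients, hence $\*u(0) = h$ in $L^2$, which is the claimed conclusion in the sense of \eqref{eq:sense_ic}.

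The main obstacle I anticipate is the interchange of the limits $m\to\infty$ and $\varepsilon\to0$: weak convergence alone gives control of $\int_0^T\eta_\varepsilon\inprod{\*u_m}{w}\d\tau$ only for fixed $\varepsilon$, so one needs a uniform-in-$m$ handle on how fast $\int_0^T\eta_\varepsilon(\tau)\inprod{\*u_m(\tau)}{w}\d\tau$ approaches $\inprod{h}{w}$ as $\varepsilon\to0$. The cleanest route is to extract this uniformity from the energy estimates (\cref{thm:energy_estimates}), which should bound the $\*u_m$ in a space continuously embedded in $C([0,T],\sobsp{-\beta})$ or at least control a suitable modulus of continuity of $t\mapsto\inprod{\*u_m(t)}{w}$ near $t=0$ independently of $m$; with such a bound the double limit is routine. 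If such time-continuity of the $\*u_m$ is not directly available, an alternative is to test \eqref{eq:weak_FPDE} against $\eta_\varepsilon(t)w$ and use that $\*u_m^{(\alpha)}$ is bounded in $L^2([0,T],\sobsp{-\beta})$ to control the relevant fractional-integral expression uniformly in $m$.
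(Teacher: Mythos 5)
Your proposal follows essentially the same route as the paper: test against $\eta_\varepsilon(t)w$ for $w$ in the span of the $w_k$, pass to the weak limit in $m$ at fixed $\varepsilon$, then send $\varepsilon\to 0$ and use density of the span in $\sobsp\beta$. In fact you are more careful than the paper at the one delicate point, the interchange of the limits $m\to\infty$ and $\varepsilon\to0$, which the paper's proof dispatches with a bare appeal to the triangle inequality while you correctly flag that some uniformity in $m$ (e.g.\ from the energy estimates or from testing the weak equation against $\eta_\varepsilon w$) is needed to justify it.
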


\begin{proof}
    Given $\varepsilon > 0$, recall the definition of $\eta_\varepsilon$ from \eqref{eq:mollifier}.
    Fix an arbitrary $w \in \mspan \left(\{w_k\}_{k\in \Z^d}\right)$.
    Then, passing to weak limits,
    \begin{equation}
        \int_0^T \inprod{\*u_m(t)}{\eta_\varepsilon (t) w} \d t \to \int_0^T \inprod{\*u(t)}{\eta_\varepsilon (t) w} \d t \qquad \text{ as } m\to \infty.
    \end{equation}
    Moreover, using \eqref{eq:sense_ic} for fixed $m$, we have
    \begin{equation}
        \int_0^T \inprod{\*u_m(t)}{\eta_\varepsilon (t) w} \d t \to \inprod{\*u_m(0)}w = \inprod{h}w\qquad \text{ as } \varepsilon\to 0,
    \end{equation}
    where the latter equality holds by assumption.
    Applying the triangle inequality, we may therefore conclude that
    \begin{equation*}
        \int_0^T \inprod{\*u(t)}{\eta_\varepsilon (t) w} \d t \to \inprod{h}w \qquad \text{ as } \varepsilon \to 0,
    \end{equation*}
    and thus by \eqref{eq:sense_ic},
    \begin{equation*}
        \inprod{\*u(0)}w = \inprod{h}w.
    \end{equation*}
    Since $w \in \mspan \left(\{w_k\}_{k\in \Z^d} \right)$ is arbitrary and the span (excluding the $k=0$ term) is dense in $\sobsp\beta$, the conclusion follows.
\end{proof}

Since we will deal with weak limits at first, we first need to find the right linear functional to consider, given by the following Lemma:
\begin{lemma}[Bounded linear functional]\label{lemma:bdd_lin_fnl}
    For any fixed $\phi \in C_c^\infty(0,T)$ and $w \in \sobsp\beta$, we have that $\Phi: L^2([0,T],\sobsp\beta) \to \C$ defined by
    \begin{equation}
        \Phi(\*v) := \int_0^T \inprod{I^{1-\alpha} \*v (t)}{\phi'(t) w} \d t
    \end{equation}
    is a bounded linear functional.
\end{lemma}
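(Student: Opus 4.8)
The plan is to dispatch linearity directly and to obtain boundedness by peeling off the spatial and temporal variables with Cauchy--Schwarz and then reducing to a scalar convolution estimate. Linearity of $\Phi$ is immediate: $I^{1-\alpha}$ is a linear operator, the Bochner integral is linear, and $\inprod{\cdot}{\phi'(t)w}$ is linear in its first slot, so $\Phi(\*v_1+\lambda\*v_2)=\Phi(\*v_1)+\lambda\Phi(\*v_2)$. For boundedness, the first observation is that $[0,T]$ has finite measure and $\norm{\cdot}\leq\norm[H^\beta]{\cdot}$, so $L^2([0,T],\sobsp\beta)\hookrightarrow L^1([0,T],L^2)$; since the kernel of $I^{1-\alpha}$ (see below) is in $L^1([0,T])$, Young's inequality gives $I^{1-\alpha}\*v\in L^1([0,T],L^2)$, so $I^{1-\alpha}\*v(t)\in L^2$ for almost every $t$ and $\Phi(\*v)$ is finite. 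Applying the Cauchy--Schwarz inequality in $L^2(\T^d)$ pointwise in $t$, using $|\phi'(t)|\leq\norm[L^\infty]{\phi'}$, and then Cauchy--Schwarz in time yields
\[
    |\Phi(\*v)| \leq \norm[L^\infty]{\phi'}\,\norm{w}\int_0^T \norm{I^{1-\alpha}\*v(t)} \d t \leq \norm[L^\infty]{\phi'}\,\norm{w}\,\sqrt T\,\Big(\int_0^T \norm{I^{1-\alpha}\*v(t)}^2 \d t\Big)^{1/2},
\]
so it remains only to bound the last factor by a multiple of $\norm[L^2([0,T],L^2)]{\*v}\leq\norm[L^2([0,T],\sobsp\beta)]{\*v}$.

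The key step is to view $I^{1-\alpha}$ as convolution on $[0,T]$ against the kernel $k(s):=\frac{s^{-\alpha}}{\Gamma(1-\alpha)}\*1_{[0,T]}(s)$, which is \emph{integrable} precisely because $\alpha<1$, with $\norm[L^1([0,T])]{k}=\frac{T^{1-\alpha}}{\Gamma(2-\alpha)}$ (the case $\alpha=1$ is trivial, as then $I^{1-\alpha}$ is the identity). Setting $g(t):=\norm{\*v(t)}$, the triangle inequality for Bochner integrals gives the pointwise bound $\norm{I^{1-\alpha}\*v(t)}\leq (k*g)(t)$, and the classical Young inequality for convolutions ($L^1*L^2\to L^2$) gives
\[
    \Big(\int_0^T \norm{I^{1-\alpha}\*v(t)}^2 \d t\Big)^{1/2} \leq \norm[L^1([0,T])]{k}\,\norm[L^2([0,T])]{g} = \frac{T^{1-\alpha}}{\Gamma(2-\alpha)}\,\norm[L^2([0,T],L^2)]{\*v}.
\]
Combining the two displays and using $\norm[L^2([0,T],L^2)]{\*v}\leq\norm[L^2([0,T],\sobsp\beta)]{\*v}$ produces $|\Phi(\*v)|\leq C\,\norm[L^2([0,T],\sobsp\beta)]{\*v}$ with $C=\frac{T^{3/2-\alpha}}{\Gamma(2-\alpha)}\norm[L^\infty]{\phi'}\,\norm{w}$, which is the claim.

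There is no serious obstacle here. The only point deserving a little care is the convolution estimate in the vector-valued setting, but by passing immediately to the scalar function $g=\norm{\*v(\cdot)}$ this reduces to the scalar Young inequality on $[0,T]$; the integrability of $k$ — equivalently, the hypothesis $\alpha<1$ (with $\alpha=1$ trivial) — is the sole genuine input, and the smoothness and compact support of $\phi'$ are used only to bound $\phi'$ in $L^\infty$.
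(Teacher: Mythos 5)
Your proof is correct and follows essentially the same route as the paper: Cauchy--Schwarz in the spatial variable, boundedness of the fractional integral operator $I^{1-\alpha}$ on $L^2$ in time, and H\"older in the time variable. The only difference is cosmetic: where the paper cites the $L^2$-boundedness of $I^{1-\alpha}$ from Samko et al., you derive it self-containedly via Young's convolution inequality with the explicit integrable kernel $s^{-\alpha}/\Gamma(1-\alpha)$, and you use $\norm[L^\infty]{\phi'}$ with an extra $\sqrt{T}$ in place of $\norm[{L^2([0,T])}]{\phi'}$; both yield the same conclusion.
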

\begin{proof}
    Clearly, the functional is linear if it is well-defined.
    Now, using the fact that $I^{1-\alpha} : L^2 \to L^2$ is a bounded linear operator from Theorem 2.6 in \cite[p48]{samko1992}, the continuity of the embedding $\sobsp\beta \to L^2$ and the Cauchy-Schwarz inequality twice, we know that there is a constant $C$ such that 
    \begin{align*}
        \Phi(\*v) &\leq \int_0^T \left|\inprod{I^{1-\alpha} \*v (t)}{\phi'(t) w}\right| \d t \\
        &\leq \int_0^T \norm{I^{1-\alpha} \*v (t)}|\phi'(t)| \norm{w} \d t \\
        &\leq \norm{w}\int_0^T C\norm{\*v (t)}|\phi'(t)| \d t \\
        &\leq \norm{w} C \norm[{L^2([0,T])}]{\phi'} \norm[{L^2([0,T],\sobsp\beta)}]{\*v},
    \end{align*}
    so the functional is bounded.
\end{proof}

Finally, we need to be able to pass to limits while maintaining the relationship between the solution and its Caputo fractional derivative. 
\begin{proposition}[Bochner limits]\label{prop:vprime_equals_u}
    Compare to Problem 5 of Chapter 7 in \cite[p449]{evans2010}.
    Assume that
    \begin{align*}
        \*u_m &\xrightharpoonup[]{} \*u \qquad \text{ weakly in } L^2\left([0,T],\sobsp\beta\right) \\
        \*u_m^{(\alpha)} &\xrightharpoonup[]{} \*v \qquad \text{ weakly in } L^2\left([0,T],\sobsp{-\beta}\right).
    \end{align*}
    Then $\*v = \*u^{(\alpha)}$.
\end{proposition}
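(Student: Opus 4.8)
The plan is to mirror the classical argument for the integer-order analogue, Problem 5 of Chapter 7 in \cite{evans2010}: test the identity we want against functions of the form $\phi(t)w$ with $\phi\in C_c^\infty(0,T)$ and $w\in\sobsp\beta$, integrate by parts in $t$, and pass to the limit $m\to\infty$. The only genuinely new feature compared to the classical case is the initial-value correction built into the Caputo derivative. Concretely, fix such a $\phi$ and $w$. For each $m$ the map $t\mapsto I^{1-\alpha}(\*u_m(t)-\*u_m(0))$ is absolutely continuous with $t$-derivative $\*u_m^{(\alpha)}$ almost everywhere, so pairing with $w$, integrating by parts (the boundary terms vanish since $\phi$ is compactly supported in $(0,T)$), and using $I^{1-\alpha}[\mathbf{1}](t)=t^{1-\alpha}/\Gamma(2-\alpha)$ to split off the initial value gives
\begin{equation*}
    \int_0^T \inprod{\*u_m^{(\alpha)}(t)}{w}\phi(t)\d t = -\int_0^T \inprod{I^{1-\alpha}\*u_m(t)}{w}\phi'(t)\d t + \frac{\inprod{\*u_m(0)}{w}}{\Gamma(2-\alpha)}\int_0^T t^{1-\alpha}\phi'(t)\d t.
\end{equation*}

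Next I would let $m\to\infty$ term by term. The left-hand side tends to $\int_0^T \inprod{\*v(t)}{w}\phi(t)\d t$, because $\*u_m^{(\alpha)}\rightharpoonup\*v$ weakly in $L^2([0,T],\sobsp{-\beta})$ and $\phi(\cdot)\,w\in L^2([0,T],\sobsp\beta)$. The first term on the right is precisely $-\Phi(\*u_m)$ for the functional $\Phi$ of \cref{lemma:bdd_lin_fnl}, which is bounded on $L^2([0,T],\sobsp\beta)$, so by weak convergence of $\*u_m$ it converges to $-\Phi(\*u)=-\int_0^T \inprod{I^{1-\alpha}\*u(t)}{w}\phi'(t)\d t$. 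The remaining term, carrying the coefficient $\inprod{\*u_m(0)}{w}$, has no counterpart in the integer-order setting, and handling it is the main obstacle. I would control it by first proving a uniform bound $\norm[\sobsp{-\beta}]{\*u_m(0)}\le C$: rewrite $I^{1-\alpha}(\*u_m-\*u_m(0))(t)=\int_0^t\*u_m^{(\alpha)}(s)\d s$ and estimate both this and $I^{1-\alpha}\*u_m$ in $\sobsp{-\beta}$, using that weakly convergent sequences are norm-bounded and that $I^{1-\alpha}$ is bounded on $L^2([0,T],\sobsp{-\beta})$; subtracting then bounds $\norm[\sobsp{-\beta}]{\*u_m(0)}$. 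Weak compactness in the Hilbert space $\sobsp{-\beta}$ yields a subsequential weak limit, and the displayed identity together with the limits already computed for the other terms pins it down uniquely, so that in fact $\*u_m(0)\rightharpoonup\*u(0)$ in $\sobsp{-\beta}$ along the whole sequence. (In the Galerkin application this step is immediate, since there $\*u_m(0)$ is the $L^2$-orthogonal projection of $h$ onto $\mspan\{w_k\}_{|k|\le m}$, which converges to $h$.) Hence the last term converges to $\Gamma(2-\alpha)^{-1}\inprod{\*u(0)}{w}\int_0^T t^{1-\alpha}\phi'(t)\d t$.

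Collecting these limits, for every $\phi\in C_c^\infty(0,T)$ and every $w\in\sobsp\beta$ we obtain
\begin{equation*}
    \int_0^T \inprod{\*v(t)}{w}\phi(t)\d t = -\int_0^T \inprod{I^{1-\alpha}(\*u(t)-\*u(0))}{w}\phi'(t)\d t.
\end{equation*}
Since $w\in\sobsp\beta$ is arbitrary, this says that the $\sobsp{-\beta}$-valued map $t\mapsto I^{1-\alpha}(\*u(t)-\*u(0))$ has distributional time derivative $\*v\in L^2([0,T],\sobsp{-\beta})$; hence it lies in $W^{1,1}([0,T],\sobsp{-\beta})$ and, after modification on a null set, is absolutely continuous with derivative $\*v$. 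By the definition of the Caputo derivative this is exactly the assertion $\*u^{(\alpha)}=\*v$. A final check that the $\*u(0)$ appearing here coincides with the value prescribed by \eqref{eq:sense_ic} — both being equal to $\lim_m\*u_m(0)$ — completes the argument.
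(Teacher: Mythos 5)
Your proposal follows essentially the same route as the paper's proof: test against $\phi(t)w$ with $\phi\in C_c^\infty(0,T)$, integrate by parts via the definition of the Caputo derivative, and pass to weak limits using \cref{lemma:bdd_lin_fnl} on the $I^{1-\alpha}$ term. The extra work you do on the initial-value term — the uniform $\sobsp{-\beta}$-bound on $\*u_m(0)$, the identification of its weak limit, and the final upgrade from a distributional identity to absolute continuity of $I^{1-\alpha}(\*u-\*u(0))$ — is a more careful treatment of a point the paper's proof absorbs silently into the application of \cref{lemma:bdd_lin_fnl} (and which, as you note, is immediate in the Galerkin setting where $\*u_m(0)$ is the truncated Fourier series of $h$).
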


\begin{proof}
    Let $\phi \in C_c^\infty(0,T)$ and $w \in \sobsp\beta$ be arbitrary.
    Then we have, by definition, for every $m$ that
    \begin{equation}
        \int_0^T \inprod[{\sobsp{-\beta},\sobsp\beta}]{\*u_m^{(\alpha)}(t)}{\phi(t)w} \d t = - \int_0^T \inprod{I^{1-\alpha}\big(\*u_m(t) - \*u_m(0)\big)}{\phi'(t) w} \d t.
    \end{equation}
    Using \cref{lemma:bdd_lin_fnl} on the right-hand side, we may pass to weak limits on both sides to obtain
    \begin{equation}
        \int_0^T \inprod[{\sobsp{-\beta},\sobsp\beta}]{\*v(t)}{\phi(t)w} \d t = - \int_0^T \inprod{I^{1-\alpha}\big(\*u(t) - \*u(0)\big)}{\phi'(t) w} \d t.
    \end{equation}
    Since this equation holds for every $w \in \sobsp\beta$, we conclude 
    \begin{equation}
        \int_0^T \phi(t) \*v(t) \d t = \int_0^T \phi(t) \*u^{(\alpha)}(t) \d t,
    \end{equation}
    for every $\phi \in C_c^\infty(0,T)$, i.e. $\*v = \*u^{(\alpha)}$.
\end{proof}

Before we show the existence of solutions, we need one final important ingredient to replace the Leibnitz rule for integer-order differentiation. 
Instead of the equality $\deriv{}{t} \norm{\*u(t)}^2 = 2\inprod[{\sobsp{-\beta},\sobsp\beta}]{\deriv{\*u}{t}}{\*u(t)}$, we will use the following fractional version, which is taken from Theorem~4.16 part (b) in \cite{neto2020}.

\begin{theorem}\label{thm:cavalho_neto_fehlberg}
    Suppose that $\*u \in L^2([0,T],\sobsp\beta)$, $\*u^{(\alpha)} \in L^2([0,T],\sobsp{-\beta})$ and that $I^{1-\alpha} \norm{\*u(t)}^2$ is absolutely continuous.
    Then after a possible reidentification, we may assume that $\*u \in C([0,T],L^2)$ and
    \begin{equation}
        \deriv[\alpha]{}{t} \norm{\*u (t)}^2 \leq 2 \inprod[{\sobsp{-\beta},\sobsp\beta}]{\*u^{(\alpha)}(t)}{\*u(t)}.
    \end{equation}
\end{theorem}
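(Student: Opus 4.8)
The plan is to reduce the statement to a pointwise \emph{fractional chain-rule inequality} for smooth $L^2$-valued functions, and then to transfer it to the regularity class of the hypotheses by mollification in time; the continuous representative $\*u\in C([0,T],L^2)$ will drop out of the same estimates, as the fractional analogue of the classical Lions--Magenes fact that, for a Gel'fand triple $V\subset H\subset V^*$, the space $\{\,v\in L^2([0,T],V):v'\in L^2([0,T],V^*)\,\}$ embeds into $C([0,T],H)$. (The conclusion is precisely Theorem~4.16(b) of \cite{neto2020}, which one could instead simply cite; what follows is how I would obtain it directly. Throughout I treat $L^2$ as a real Hilbert space, as holds for the real-valued solutions of interest; otherwise one takes real parts.)

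\emph{Step 1: the smooth case.} Assume first $\*u\in C^1([0,T],L^2)$ and $\alpha<1$, the case $\alpha=1$ being the usual chain rule with equality. Writing the Caputo derivative explicitly, using $\inprod{\*u'(s)}{\*u(t)}-\inprod{\*u(s)}{\*u'(s)}=\inprod{\*u(t)-\*u(s)}{\*u'(s)}$, then substituting $\*w(s):=\*u(t)-\*u(s)$ and integrating by parts in $s$ (the endpoint term at $s=t$ vanishing since $\norm{\*w(s)}^2=\bigO((t-s)^2)$), one arrives at the identity
\[
2\inprod{\*u^{(\alpha)}(t)}{\*u(t)}-\deriv[\alpha]{}{t}\norm{\*u(t)}^2
=\frac{\norm{\*u(t)-\*u(0)}^2}{\Gamma(1-\alpha)\,t^{\alpha}}
+\frac{\alpha}{\Gamma(1-\alpha)}\int_0^t\frac{\norm{\*u(t)-\*u(s)}^2}{(t-s)^{1+\alpha}}\d s\;\ge\;0 .
\]
The right-hand side is manifestly nonnegative, being $L^2$-norms integrated against strictly positive kernels; this is the Hilbert-space version of Alikhanov's inequality, and the computation goes through verbatim for $\*u$ absolutely continuous with $\*u'\in L^1([0,T],L^2)$.

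\emph{Step 2: approximation and the continuous representative.} Given $\*u$ as in the hypotheses, mollify in time, $\*u_\varepsilon:=\rho_\varepsilon*\*u$, with a standard mollifier, handling the endpoint $t=0$ via a Steklov average together with the precise representative \eqref{eq:sense_ic} of $\*u(0)$. Since $I^{1-\alpha}$ is itself a convolution in $t$, it commutes with $\rho_\varepsilon$ up to controlled boundary layers, so $\*u_\varepsilon\to\*u$ in $L^2([0,T],\sobsp\beta)$, $\*u_\varepsilon^{(\alpha)}\to\*u^{(\alpha)}$ in $L^2([0,T],\sobsp{-\beta})$, and $I^{1-\alpha}(\norm{\*u_\varepsilon(t)}^2)\to I^{1-\alpha}(\norm{\*u(t)}^2)$ with uniform equicontinuity inherited from the assumed absolute continuity. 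Pairing the Step~1 inequality for $\*u_\varepsilon$ against an arbitrary nonnegative $\phi\in C_c^\infty(0,T)$, integrating over $(0,T)$, and letting $\varepsilon\to0$ yields the inequality distributionally, hence — using \cref{thm:leb_diff_thm} and the fact that $\deriv[\alpha]{}{t}\norm{\*u(t)}^2$ exists for a.e.\ $t$ by the absolute-continuity hypothesis — for almost every $t$. The same uniform equicontinuity of $t\mapsto\norm{\*u_\varepsilon(t)}^2$, together with the weak continuity of $t\mapsto\*u(t)$ that follows from $\*u\in L^2([0,T],\sobsp\beta)$ having $\*u^{(\alpha)}\in L^2([0,T],\sobsp{-\beta})$, pins down a representative of $\*u$ in $C([0,T],L^2)$ (weak continuity plus norm continuity gives strong continuity in a Hilbert space), which is the claimed reidentification.

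\emph{Main obstacle.} The delicate part is the interaction of the singular kernels $(t-s)^{-\alpha}$ and $(t-s)^{-1-\alpha}$ with the merely absolute-continuous regularity assumed: one must justify the integration by parts and the vanishing endpoint term, and then push the limit $\varepsilon\to0$ through the remainder integrals uniformly — most sensitively near $t=0$, where the term $\norm{\*u(t)-\*u(0)}^2/t^{\alpha}$ has to be reconciled with the precise-representative definition \eqref{eq:sense_ic} of $\*u(0)$. Carrying this out rigorously is exactly the technical content of \cite[Theorem~4.16]{neto2020}, which is why we quote that result rather than reproduce its proof.
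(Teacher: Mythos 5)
The paper does not actually prove this theorem: it is imported verbatim as Theorem~4.16(b) of \cite{neto2020}, with only a remark afterwards reconciling the definition of $\*u(0)$ in \eqref{eq:sense_ic} with the construction there. Your proposal therefore does strictly more than the paper, and what it does is sound: the Step~1 identity is correct (writing the difference as $-\tfrac{1}{\Gamma(1-\alpha)}\int_0^t(t-s)^{-\alpha}\tfrac{\mathrm{d}}{\mathrm{d}s}\norm{\*u(t)-\*u(s)}^2\d s$ and integrating by parts does produce the two nonnegative terms you display, and the endpoint term at $s=t$ vanishes for $C^1$ data), and this Alikhanov-type inequality plus mollification is indeed the mechanism behind the cited result. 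Two places where your sketch is thinner than a proof, both of which you flag: first, the claim that equicontinuity of $t\mapsto\norm{\*u_\varepsilon(t)}^2$ is ``inherited from the assumed absolute continuity'' of $I^{1-\alpha}\norm{\*u(t)}^2$ does not follow directly — applying $I^\alpha$ to the differential inequality only bounds $\norm{\*u(t)}^2-\norm{\*u(0)}^2$ by $I^\alpha$ of an $L^1$ function, which need not be continuous — and the continuous representative in \cite{neto2020} is in fact obtained by showing the mollified sequence is Cauchy in $C([0,T],L^2)$, as the paper's own remark records; second, the passage $\varepsilon\to0$ through the singular kernels near $t=0$ and its interaction with \eqref{eq:sense_ic}. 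Since you explicitly defer exactly these points to \cite[Theorem~4.16]{neto2020}, which is also the paper's entire treatment, the proposal is acceptable; it has the added value of exhibiting the positive defect term, which makes the direction of the inequality transparent, whereas the paper leaves the reader to consult the reference.
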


\begin{remark}
    The issue of defining an initial condition precisely for the purposes of ensuring the fractional Caputo derivative is well-defined is only briefly discussed in \cite{neto2020}.
    However, we observe that it is \emph{a posteriori} demonstrated that $u$ is continuous by showing that a reasonable sequence of mollifications of $u$ is Cauchy in $C([0,T],L^2)$.
    This means that our choice to define $u(0)$ in the manner given in \eqref{eq:sense_ic} does not violate any steps taken in \cite{neto2020}.
\end{remark}

\section{Existence and uniqueness of weak solutions} \label{sec:existence}

The following two subsections, concerning existence and uniqueness, adapt strategies outlined in \cite[p377ff]{evans2010} to our fractional differential equation.
In particular, Theorems \ref{thm:approx_solns}, \ref{thm:energy_estimates}, \ref{thm:existence} and \ref{thm:uniqueness} correspond to Theorems 1, 2, 3 and 4 on pages 378--381 of \cite{evans2010}, respectively.

\subsection{Existence results}

\begin{theorem}[Approximate solutions]\label{thm:approx_solns}
    Suppose that all coefficients of the equation are continuous in time, i.e. $\*V \in C([0,T],H^1)$ and $\*f \in C([0,T],L^2)$.
    Then, for every positive integer $m$ there is a unique function $\*u_m:\T^d\times [0,T] \to \R$ of the form
    \begin{equation}\label{eq:approx_soln_form}
        \*u_m(t) = \sum_{|k|\leq m} d_m^k(t) w_k,
    \end{equation}
    where the coefficients $d_m^k(t)$ satisfy
    \begin{equation}\label{eq:approx_eqn_ic}
        d_m^k(0) = \inprod{h}{w_k},
    \end{equation}
    solving the weak equation
    \begin{equation}\label{eq:approx_eqn}
        \inprod[{\sobsp{-\beta},\sobsp\beta}]{\*u_m^{(\alpha)}}{w_k} + B[\*u_m,w_k] = \inprod{\*f}{w_k} 
    \end{equation}
    for all times $0 \leq t \leq T$ and $|k| \leq m$.
\end{theorem}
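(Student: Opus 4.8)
The plan is the standard Galerkin reduction adapted to the Caputo derivative: substitute the ansatz \eqref{eq:approx_soln_form} into \eqref{eq:approx_eqn}, collapse it to a finite linear system of fractional ODEs for the coefficient vector $\*d_m(t) := \bigl(d_m^k(t)\bigr)_{|k|\le m}$, and apply \cref{thm:ode_existence_and_uniqueness}. Since the $w_k$ are time-independent, the Caputo derivative commutes with the finite sum, giving $\*u_m^{(\alpha)}(t) = \sum_{|l|\le m}\bigl(\deriv[\alpha]{}{t} d_m^l\bigr)(t)\,w_l$; this is valued in $\mspan\bigl(\{w_l\}_{|l|\le m}\bigr)\subset L^2$, so by \eqref{eq:pairing_inprod} the pairing $\inprod[{\sobsp{-\beta},\sobsp\beta}]{\*u_m^{(\alpha)}}{w_k}$ is just the $L^2$ inner product $\inprod{\*u_m^{(\alpha)}}{w_k}$, which equals $\deriv[\alpha]{}{t} d_m^k(t)$ by orthonormality of $\{w_l\}$. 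Treating $B[\*u_m,w_k]$ exactly as in the derivation of \eqref{eq:fpde_fourier_version}, and noting that the discrete convolution appearing there is now a \emph{finite} sum because $\ft{\*u}_m$ is supported on $\{|l|\le m\}$, equation \eqref{eq:approx_eqn} becomes precisely \eqref{eq:fpde_fourier_version} with $u$ replaced by $\*u_m$, i.e.
\begin{equation*}
    \deriv[\alpha]{}{t} d_m^k(t) = -K|k|^\beta d_m^k(t) + ik\cdot\!\sum_{|l|\le m}\longft{\nabla V}(k-l,t)\,d_m^l(t) + \ft f(k,t),\qquad |k|\le m,
\end{equation*}
with initial data $d_m^k(0) = \inprod{h}{w_k}$ as in \eqref{eq:approx_eqn_ic}. (For $k=0$ the drift term vanishes and $\ft f(0,t)=0$ by \eqref{eq:source_term_condition}, so this line merely records that $d_m^0$ is the constant $\inprod{h}{w_0}$, in agreement with \cref{thm:constant_mass}.)

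Writing $n := \#\{k\in\Z^d : |k|\le m\}$, I would recognise this as the system $\deriv[\alpha]{}{t}\*d_m(t) = g(t,\*d_m(t))$ with $g:[0,T]\times\C^n\to\C^n$ affine in its second argument, $g(t,y)_k := -K|k|^\beta y_k + ik\cdot\sum_{|l|\le m}\longft{\nabla V}(k-l,t)\,y_l + \ft f(k,t)$, and then check the hypotheses of \cref{thm:ode_existence_and_uniqueness}. Because $g(t,\cdot)$ is affine, joint continuity of $g$ and its global Lipschitz property in $y$ both reduce to continuity on $[0,T]$ of the coefficients $t\mapsto\ft f(k,t) = \inprod{\*f(t)}{w_k}$ and $t\mapsto\longft{\nabla V}(k-l,t) = \inprod{\nabla\*V(t)}{w_{k-l}}$ --- which is exactly where the hypotheses $\*f\in C([0,T],L^2)$ and $\*V\in C([0,T],H^1)$ (hence $\nabla\*V\in C([0,T],L^2)$) are used. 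Continuity on the compact interval $[0,T]$ forces these coefficients to be bounded, so $g(t,\cdot)$ has a Lipschitz constant uniform in $t$.

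\cref{thm:ode_existence_and_uniqueness} then produces a unique $\*d_m\in C([0,T],\C^n)$ solving this fractional initial value problem, and setting $\*u_m(t):=\sum_{|k|\le m}d_m^k(t)\,w_k$ yields the required function: it has the form \eqref{eq:approx_soln_form}, satisfies \eqref{eq:approx_eqn_ic}, and --- reversing the reduction --- satisfies \eqref{eq:approx_eqn} for all $t\in[0,T]$ and $|k|\le m$, while uniqueness of $\*u_m$ follows from uniqueness of $\*d_m$. I would also record the regularity claim: the solution furnished by \cref{thm:ode_existence_and_uniqueness} has each $I^{1-\alpha}\bigl(d_m^k - d_m^k(0)\bigr)$ absolutely continuous and $\deriv[\alpha]{}{t}d_m^k = g(\cdot,\*d_m(\cdot))_k$ continuous, so $\*u_m\in C([0,T],\sobsp\beta)$ --- or $C([0,T],H^\beta)$ of constant mass $(h)$ if $(h)\neq 0$, a case one may exclude by \cref{thm:constant_mass} --- and $\*u_m^{(\alpha)} = \sum_{|k|\le m}\bigl(\deriv[\alpha]{}{t}d_m^k\bigr)\,w_k\in C([0,T],\sobsp{-\beta})$; hence every term in \eqref{eq:approx_eqn} is classically well-defined, and $\*u_m(0)=h$ holds in the ordinary sense, which agrees with \eqref{eq:sense_ic} because $\*u_m$ is continuous. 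Finally, if $h$, $V$, $f$ are real-valued, then $\bigl(\overline{d_m^{-k}}\bigr)_{|k|\le m}$ solves the same system, so uniqueness forces $d_m^{-k}=\overline{d_m^k}$ and $\*u_m$ is real-valued.

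I do not expect a genuine obstacle here: the inherently fractional difficulties --- nonlocality in time and the absence of a classical flow map --- are all absorbed into \cref{thm:ode_existence_and_uniqueness}, which is already global on $[0,T]$, and the remainder is routine. The steps that warrant some care are the algebra turning $B[\*u_m,w_k]$ into the finite convolution sum above, the verification that the $\sobsp{-\beta}$--$\sobsp\beta$ pairing genuinely reduces to the $L^2$ inner product on this finite-dimensional subspace, and the observation that only the \emph{continuity-in-time} hypotheses on $V$ and $f$ enter at this stage --- the stronger spatial regularity of $V$ behind the coercivity bound \cref{thm:coercivity_bound} plays no role yet.
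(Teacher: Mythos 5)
Your proposal is correct and follows essentially the same route as the paper: substitute the Galerkin ansatz, use \eqref{eq:pairing_inprod} to reduce the dual pairing to the $L^2$ inner product, obtain an affine fractional ODE system for the coefficients whose right-hand side is continuous in $t$ and globally Lipschitz in $y$ by the continuity hypotheses on $\*V$ and $\*f$, and invoke \cref{thm:ode_existence_and_uniqueness}. The only cosmetic difference is that you write the drift coefficients explicitly as the finite convolution from \eqref{eq:fpde_fourier_version}, where the paper keeps them abstract as $B[w_j,w_k;t]$.
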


\begin{proof}
    Insert \eqref{eq:approx_soln_form} into the equation given in \eqref{eq:approx_eqn}. 
    Using the fact from \eqref{eq:pairing_inprod} that the $\sobsp{-\beta}$--$\sobsp\beta$ pairing is equivalent to the $L^2$ inner product whenever both make sense, we obtain the system of equations in $|k| \leq m$,
    \begin{equation*}
        \deriv[\alpha]{}{t} d_m^k(t) + \sum_{|j| \leq m} B[w_j,w_k] d_m^j(t) = \*{\ft{f}} (k,t).
    \end{equation*}
    If we define $g: [0,T] \times \C^{(2m+1)^d} \to \C^{(2m+1)^d}$, the spaces indexed by $|k| \leq m$, by
    \begin{equation*}
        g_k(t,y) = \*{\ft{f}}(k,t) - \sum_{|j| \leq m} B[w_j,w_k;t] y_j,     
    \end{equation*}
    then the equation reads
    \begin{equation}\label{eq:approx_soln_proof}
        \deriv[\alpha]{}{t} d_m^k(t) = g_k\left( t,\bigl(d_m^j(t)\bigr)_{|j|\leq m}\right).
    \end{equation}
    Since
    \begin{equation*}
        \big|\ft{\*f}(k,t) - \ft{\*f}(k,\tau) \big| = \left| \inprod{{\*f}(\cdot,t)}{w_k} - \inprod{{\*f}(\cdot,\tau)}{w_k} \right| \leq \norm{{\*f}(\cdot,t) - {\*f}(\cdot,\tau)}
    \end{equation*}
    for arbitrary $t$ and $\tau$ and fixed $k$, the assumptions on $\*V$ and $\*f$ imply that $g$ is continuous on $[0,T]\times \C^n$ and Lipschitz in $y$. Then \cref{thm:ode_existence_and_uniqueness} yields that subject to the initial conditions \eqref{eq:approx_eqn_ic}, the equation \eqref{eq:approx_soln_proof} has a unique continuous solution.
\end{proof}

Given the approximate solutions $\*u_m$, we want to pass to a limit.
The next theorem provides the bounds needed and uses many of the results from \cref{sec:prelims}.
First, suppose that there exists
\begin{equation}
    M_1 := \sup_{t\in[0,T]}c(\*V(t)),
\end{equation}
where $c(\cdot)$ is the constant defined in \eqref{eq:constant_from_potential}.
\begin{theorem}[Energy estimates]\label{thm:energy_estimates}
    Suppose $\beta > \frac12$. Assume that the constant in \eqref{eq:constant_from_potential} exists (see \cref{rmk:suff_cond_on_V}). Then there exists a constant $C$, depending only on $\alpha$, $\beta$, the final time $T$, the diffusion constant $K$ and the bound $M_1$, such that
    \begin{equation}\label{eq:energy_estimates}
        \sup_{0\leq t \leq T} \norm{\*u_m(t)} + \norm[{L^2\left([0,T],\sobsp\beta\right)}]{\*u_m} + \norm[{L^2\left([0,T],\sobsp[]{-\beta}\right)}] {\*u_m^{(\alpha)}} \leq C \left(\norm{h} + \norm[{L^\infty([0,T],L^2)}]{\*f}\right).
    \end{equation}
\end{theorem}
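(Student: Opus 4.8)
The plan is to test the approximate equation \eqref{eq:approx_eqn} against $\*u_m(t)$ itself, i.e. to take a suitable linear combination of the equations over $|k|\le m$ with coefficients $\overline{d_m^k(t)}$, and then run the fractional Gr\"onwall machinery. First I would observe that, since $\*u_m(t)\in\mspan(\{w_k\}_{|k|\le m})\subset\sobsp\beta$, summing \eqref{eq:approx_eqn} yields
\begin{equation*}
    \inprod[{\sobsp{-\beta},\sobsp\beta}]{\*u_m^{(\alpha)}(t)}{\*u_m(t)} + B[\*u_m(t),\*u_m(t)] = \inprod{\*f(t)}{\*u_m(t)}
\end{equation*}
for a.e.\ $t$. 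Now I apply \cref{thm:cavalho_neto_fehlberg} to replace the pairing term: since $I^{1-\alpha}\norm{\*u_m(t)}^2$ is absolutely continuous (the $d_m^k$ are continuous, indeed the ODE theory gives enough regularity), we get
\begin{equation*}
    \tfrac12\deriv[\alpha]{}{t}\norm{\*u_m(t)}^2 + B[\*u_m(t),\*u_m(t)] \le \inprod{\*f(t)}{\*u_m(t)}.
\end{equation*}
Next I invoke the coercivity bound \cref{thm:coercivity_bound}(i): there are $\gamma_1>0$, $\gamma_2\ge0$ depending on $K$, $M_1$, $\beta$ with $\gamma_1\norm[H^\beta]{\*u_m(t)}^2 \le B[\*u_m(t),\*u_m(t)] + \gamma_2\norm{\*u_m(t)}^2$, and on the right-hand side I use Cauchy--Schwarz together with the Young inequality \eqref{eq:apx_Cauchy_Young_ineq} to write $\inprod{\*f(t)}{\*u_m(t)}\le \tfrac12\norm{\*f(t)}^2 + \tfrac12\norm{\*u_m(t)}^2$. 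Combining these gives a differential inequality of the form
\begin{equation*}
    \deriv[\alpha]{}{t}\norm{\*u_m(t)}^2 + 2\gamma_1\norm[H^\beta]{\*u_m(t)}^2 \le c\,\norm{\*u_m(t)}^2 + \norm{\*f(t)}^2
\end{equation*}
with $c = 2\gamma_2+1$ independent of $m$. Dropping the nonnegative gradient term leaves exactly the hypothesis \eqref{eq:gronwall_hypothesis} of \cref{thm:gronwall} with $\varphi(t)=\norm{\*u_m(t)}^2$, $\zeta(t)=\norm{\*f(t)}^2$ (which is integrable; I would replace it by the nondecreasing majorant $t\mapsto t\norm[{L^\infty([0,T],L^2)}]{\*f}^2$, or just bound $I^\alpha\zeta(t)\le \tfrac{T^\alpha}{\Gamma(\alpha+1)}\norm[{L^\infty([0,T],L^2)}]{\*f}^2$, to meet the nondecreasing requirement). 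Also $\varphi(0)=\norm{\*u_m(0)}^2 = \sum_{|k|\le m}|\inprod h{w_k}|^2\le\norm h^2$ by \eqref{eq:approx_eqn_ic} and Bessel. Fractional Gr\"onwall then yields $\sup_{[0,T]}\norm{\*u_m(t)}^2 \le C(\norm h^2 + \norm[{L^\infty([0,T],L^2)}]{\*f}^2)$ with $C$ depending only on $\alpha,\beta,T,K,M_1$ through $c$ and $E_\alpha(cT^\alpha)$; this controls the first term in \eqref{eq:energy_estimates}.

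For the second term, I would go back to the differential inequality \emph{without} discarding the gradient term, apply $I^\alpha$ to both sides, use $I^\alpha(\deriv[\alpha]{}{t}\varphi) = \varphi-\varphi(0)$ as in the proof of \cref{thm:gronwall}, and rearrange to get $2\gamma_1 I^\alpha(\norm[H^\beta]{\*u_m(\cdot)}^2)(T) \le \varphi(0) + c\,I^\alpha\varphi(T) + I^\alpha\zeta(T)$. Since the Riemann--Liouville kernel $(T-\tau)^{\alpha-1}/\Gamma(\alpha)$ is bounded below on $[0,T/2]$ and, more simply, since $I^\alpha g(T) \ge \frac{1}{\Gamma(\alpha)}\int_0^T (T-\tau)^{\alpha-1} g(\tau)\d\tau$ with the kernel controlling $\int_0^T g$ up to a $T$-dependent constant when $\alpha\le1$ — actually the cleanest route is: for $\alpha\le1$, $(T-\tau)^{\alpha-1}\ge T^{\alpha-1}$ is false; instead use that $g\ge0$ and $\int_0^T g\,\d\tau = \int_0^T g\,\d\tau$, bounding $\|\*u_m\|_{L^2([0,T],\sobsp\beta)}^2$ directly by noting $\norm[H^\beta]{\,\cdot\,}$ and $\norm[{\sobsp\beta}]{\,\cdot\,}$ are equivalent (\cref{rmk:norms}) and invoking a weighted estimate — here I would instead simply integrate the differential inequality against the constant weight by applying the monotonicity of $I^\alpha$ more carefully, or cite that $I^\alpha$ composed with evaluation controls the $L^1$ norm. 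The upshot, using the already-established $\sup$-bound to absorb the $c\,I^\alpha\varphi$ term, is $\norm[{L^2([0,T],\sobsp\beta)}]{\*u_m}^2 \le C(\norm h^2 + \norm[{L^\infty([0,T],L^2)}]{\*f}^2)$.

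Finally, for the third term I would estimate $\norm[{\sobsp{-\beta}}]{\*u_m^{(\alpha)}(t)}$ by duality: for $v\in\sobsp\beta$ with $\norm[{H^\beta}]{v}\le1$, decompose $v = v' + v''$ where $v'$ is the projection onto $\mspan(\{w_k\}_{|k|\le m})$; since $\*u_m^{(\alpha)}(t)\in\mspan(\{w_k\}_{|k|\le m})$, only $v'$ contributes and $\norm[{H^\beta}]{v'}\le\norm[{H^\beta}]{v}\le1$. Then by \eqref{eq:approx_eqn}, $|\inprod{\*u_m^{(\alpha)}(t)}{v'}| = |\inprod{\*f(t)}{v'} - B[\*u_m(t),v']| \le \norm{\*f(t)} + |B[\*u_m(t),v']|$, and the bilinear form is bounded on $\sobsp\beta\times\sobsp\beta$ — continuity of $K\flap{\beta/2}\cdot$ pairing is immediate from Cauchy--Schwarz, and the drift term is bounded by \cref{prop:weak_bound_forcing_term} (with $v=v'$, $u=\*u_m(t)$) by $c(V)\norm{\flap{1/4}\*u_m(t)}\norm{\flap{1/4}v'}\le M_1\norm[{H^\beta}]{\*u_m(t)}$ using $\beta\ge1/2$ and \cref{prop:poincare}(ii). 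Hence $\norm[{\sobsp{-\beta}}]{\*u_m^{(\alpha)}(t)} \le C(\norm{\*f(t)} + \norm[{H^\beta}]{\*u_m(t)})$, and squaring and integrating in $t$ over $[0,T]$ and applying the two bounds already obtained gives the third term. Summing the three estimates and taking square roots yields \eqref{eq:energy_estimates}.

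The main obstacle I anticipate is the bookkeeping around the fractional integral in passing from the differential inequality to the $L^2([0,T],\sobsp\beta)$ bound: unlike the classical case, one cannot simply integrate $\deriv{}{t}\norm{\*u_m}^2$ from $0$ to $T$ and read off $\int_0^T\norm[{H^\beta}]{\*u_m}^2$ cleanly, because applying $I^\alpha$ produces a Riemann--Liouville-weighted integral rather than a flat one. The fix is either (i) to note the weight $(T-\tau)^{\alpha-1}/\Gamma(\alpha)$ is bounded below by a positive constant on $[0,T-\delta]$ for each $\delta>0$ — giving a local-in-time bound that one patches together — or, more efficiently, (ii) to establish the $\sobsp\beta$-bound on $[0,s]$ for a.e.\ $s$ by choosing the evaluation point $T$ in the argument to be a generic $s$, and then realizing that what \cref{thm:gronwall} and the monotonicity of $I^\alpha$ actually deliver is control of $I^\alpha(\norm[{H^\beta}]{\*u_m}^2)(s)$ uniformly in $s$, from which $\int_0^T\norm[{H^\beta}]{\*u_m}^2\d\tau$ follows by integrating once more in $s$ against the appropriate kernel (a semigroup/Fubini argument using $I^\alpha I^{1-\alpha} = I^1$). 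A second, more minor, subtlety is verifying the absolute-continuity hypothesis of \cref{thm:cavalho_neto_fehlberg} for $I^{1-\alpha}\norm{\*u_m(t)}^2$; this follows since $t\mapsto d_m^k(t)$ are continuous solutions of a fractional ODE whose right-hand side is continuous, so $I^{1-\alpha}d_m^k$ is absolutely continuous by construction of the Caputo derivative, and products/sums of such functions retain enough regularity — if a cleaner justification is needed one mollifies the $\*u_m$ and passes to the limit, exactly as remarked after \cref{thm:cavalho_neto_fehlberg}.
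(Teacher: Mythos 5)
Your proposal follows the paper's proof essentially step for step: test \eqref{eq:approx_eqn} against $\*u_m$, apply \cref{thm:cavalho_neto_fehlberg} and \cref{thm:coercivity_bound} to obtain the differential inequality $\deriv[\alpha]{}{t}\norm{\*u_m}^2 + 2\gamma_1\norm[H^\beta]{\*u_m}^2 \le (2\gamma_2+1)\norm{\*u_m}^2 + \norm{\*f}^2$, run the fractional Gr\"onwall inequality for the $\sup$ bound, apply $I^\alpha$ and rearrange for the $L^2([0,T],\sobsp\beta)$ bound, and finish the $\sobsp{-\beta}$ bound on $\*u_m^{(\alpha)}$ by exactly the paper's duality/projection argument. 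The one place you go astray is your ``main obstacle'': you assert that $(T-\tau)^{\alpha-1}\ge T^{\alpha-1}$ is false, but since $\alpha\le 1$ the exponent $\alpha-1\le 0$ and $T-\tau\le T$, so the inequality \emph{does} hold, and it is precisely the content of \cref{prop:integral_poincare}, namely $\int_0^t g(\tau)\d\tau \le \Gamma(\alpha)\,T^{1-\alpha} I^\alpha g(t)$ for nonnegative $g$ --- which is exactly the lemma the paper invokes at this step to convert the Riemann--Liouville-weighted integral into a flat one. So the difficulty you flag is a non-issue, resolved by a one-line kernel comparison; your fallback route (ii) via $I^1 = I^{1-\alpha}I^\alpha$ together with the uniform-in-$s$ control of $I^\alpha(\norm[H^\beta]{\*u_m}^2)(s)$ is also valid, just more roundabout. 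Everything else, including the nondecreasing-majorant fix for $\zeta$ and the Bessel bound $\norm{\*u_m(0)}\le\norm h$, matches the paper's argument.
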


\begin{proof}
    For all $0 \leq t \leq T$, we multiply equation \eqref{eq:approx_eqn} by $d_m^k(t)$ and sum over $|k| \leq m$ to find the equation
    \begin{equation}\label{eq:sum_approx_eqn}
        \inprod[{\sobsp{-\beta},\sobsp\beta}]{\*u_m^{(\alpha)}}{\*u_m} + B[\*u_m,\*u_m] = \inprod{\*f}{\*u_m}.
    \end{equation}
    Combining the Cauchy-Schwarz and Young's inequality, we have $|\inprod{\*f}{\*u_m}| \leq \norm{\*f}\norm{\*u_m} \leq \frac12 (\norm{\*f}^2 + \norm{\*u_m}^2)$.
    Recalling \cref{thm:cavalho_neto_fehlberg}, we have that $\deriv[\alpha]{}{t}\left(\norm{\*u_m}^2\right) \leq 2\inprod[{\sobsp{-\beta},\sobsp\beta}]{\*u_m^{(\alpha)}}{\*u_m}$. Thus, we can use \eqref{eq:sum_approx_eqn} and \eqref{eq:coercivity_bound} to estimate 
    \begin{align}
        \deriv[\alpha]{}{t}\left(\norm{\*u_m}^2\right) + 2\gamma_1 \norm[H^\beta]{\*u_m}^2 &\leq 2 \left( \gamma_1 \norm[H^\beta]{\*u_m}^2 + \inprod{\*f}{\*u_m} - B[\*u_m,\*u_m] \right) \\
        &\leq (2\gamma_2 +1) \norm{\*u_m}^2 + \norm{\*f}^2.\label{eq:energy_estimates_proof}
    \end{align}
    Note that the dependency of $\gamma_1$ and $\gamma_2$ on $c(V)$ in \cref{thm:coercivity_bound} means that we can choose both constants using the uniform bound $M_1$ instead of $c(V)$.
    
    In particular, we have established $\deriv[\alpha]{}{t}\left(\norm{\*u_m}^2\right) \leq (2\gamma_2+1) \norm{\*u_m}^2 + \norm{\*f}^2$, so the differential version of Gr\"onwall's Lemma in \cref{thm:gronwall} yields
    \begin{equation}
        \norm{\*u_m(t)}^2 \leq \left(I^{\alpha} \norm{\*f}^2 + \norm{\*u_m(0)}^2\right) E_\alpha\bigl((2\gamma_2 +1) t^\alpha\bigr) \leq \norm{h}^2 E_\alpha\bigl((2\gamma_2 + 1) T^\alpha\bigr)
    \end{equation}
    and thus, setting $c = E_\alpha\bigl((2\gamma_2 + 1) T^\alpha\bigr) \max\{1, \frac1{\Gamma(\alpha+1)} T^\alpha \}$,
    \begin{equation}\label{eq:energy_estimate_first_term}
        \sup_{0\leq t\leq T}\norm{\*u_m(t)}^2 \leq c \left(\norm{h}^2 + \norm[{L^\infty([0,T],L^2)}]{\*f}^2\right).
    \end{equation}

    To control the second term on the left-hand side of \eqref{eq:energy_estimates}, apply $I^\alpha$ to both sides of \eqref{eq:energy_estimates_proof} to reach
    \begin{equation}
        \norm{\*u_m(t)}^2 - \norm{h}^2 + 2 \gamma_1 I^\alpha \norm[H^\beta]{\*u_m(t)}^2 \leq (2\gamma_2 + 1) I^\alpha \norm{\*u_m(t)}^2 + I^\alpha\norm{\*f}^2.
    \end{equation}
    Dropping the first term, rearranging the second term, applying \cref{prop:integral_poincare} to the third term and using \eqref{eq:energy_estimate_first_term} for the right-hand side gives for $t = T$,
    \begin{equation}
        \frac{2\gamma_1}{\Gamma(\alpha) T^{1-\alpha}} \int_0^T \norm[H^\beta]{\*u_m(t)}^2 \d t \leq \max \left\{1,\frac{T^\alpha}{\Gamma(\alpha+1)}\right\} (2\gamma_2 + 2) c \left(\norm{h}^2 + \norm[{L^\infty([0,T],L^2)}]{\*f}^2\right).
    \end{equation}
    Consequently, for an appropriate $c' > 0$, we conclude that 
    \begin{equation}
        \norm[{L^2\left([0,T],\sobsp\beta\right)}]{\*u_m(t)}^2 \leq c' \left(\norm{h}^2 + \norm[{L^\infty([0,T],L^2)}]{\*f}^2 \right).
    \end{equation}

    Moving on to the third term on the left-hand side of \eqref{eq:energy_estimates}, consider an arbitrary $v \in \sobsp\beta$ such that $\norm[H^\beta]{v} \leq 1$.
    Since $\{w_k\}_{|k|\leq m}$ is an orthonormal sequence in $L^2$, we can find a unique orthogonal decomposition of the form $v = v_1 + v_2$, satisfying $v_1 \in \mspan \left(\{w_k\}_{|k|\leq m}\right)$ and $v_2 \in L^2$ with $\inprod[L^2]{v_2}{w_k} = 0$ for all $|k| \leq m$.
    Furthermore, since $\mspan \left(\{w_k\}_{|k|\leq m}\right) \subset \sobsp \beta$ and  because $\{w_k\}_{|k|\leq m}$ is orthogonal in $\sobsp\beta$, we have that $\norm[H^\beta]{v_1} \leq \norm[H^\beta]v \leq 1$.
    From \eqref{eq:approx_eqn} we deduce that
    \begin{equation}
        \inprod[{\sobsp{-\beta},\sobsp\beta}]{\*u_m^{(\alpha)}}{v_1} + B[\*u_m,v_1] = \inprod{\*f}{v_1}.
    \end{equation}    
    Since $\*u_m^{(\alpha)} \in L^2$, we can apply \eqref{eq:pairing_inprod} to establish
    \begin{equation}
        \inprod[{\sobsp{-\beta},\sobsp\beta}]{\*u_m^{(\alpha)}}{v} = \inprod{\*u_m^{(\alpha)}}{v} = \inprod{\*u_m^{(\alpha)}}{v_1} = \inprod[{\sobsp{-\beta},\sobsp\beta}]{\*u_m^{(\alpha)}}{v_1} = \inprod{\*f}{v_1} - B[\*u_m,v_1],
    \end{equation}
    which implies that
    \begin{align}
        \left| \inprod[{\sobsp{-\beta},\sobsp\beta}]{\*u_m^{(\alpha)}}{v_1}\right| &\leq K \left|\inprod{\flap{\beta/2} \*u_m}{\flap{\beta/2} v_1} \right| + \left| \int_{\T^d} \nabla \*V u \cdot \nabla v_1 \d x \right| + \left|\inprod{\*f}{v_1}\right| \\
        &\leq (K+M_1) \norm{\flap{\beta/2} \*u_m} \norm{\flap{\beta/2} v_1} + \norm{\*f} \norm{v_1} \\
        &\leq \left((K+M_1) \norm[H^\beta]{\*u_m} + \norm{\*f}\right) \norm[H^\beta]{v},
    \end{align}
    using the Cauchy-Schwarz inequality, Poincar\'e's inequality repeatedly and \cref{prop:weak_bound_forcing_term}.
    Thus, 
    \begin{equation}
        \norm[{\sobsp[]{-\beta}}]{\*u_m^{(\alpha)}} \leq (K+M_1) \norm[H^\beta]{\*u_m} + \norm{\*f}.
    \end{equation}
    Squaring and integrating this inequality from $0$ to $T$ yields the desired estimate
    \begin{align}
        \norm[{L^2\left([0,T],\sobsp[]{-\beta}\right)}]{\*u_m^{(\alpha)}}^2 = \int_0^T \norm[{\sobsp[]{-\beta}}]{\*u_m^{(\alpha)}}^2 \d t &\leq 2(K+M_1)^2 \norm[{L^2\left([0,T],\sobsp\beta\right)}]{\*u_m}^2 + 2 \norm[{L^2([0,T],L^2)}]{\*f}^2 \\
        &\leq c'' \left(\norm{h}^2 + \norm[{L^\infty([0,T],L^2)}]{\*f}^2\right)
    \end{align}
    where we picked $c'' = 2\max\left\{(K+M_1)^2 c', T\right\}$.
\end{proof}

\begin{remark}
    In the case that $K > M_1$, in which case we may also allow $\beta = \frac12$, we get linear dependence of $C$ on $T$. If we track the constants in the above proof, we notice that the constant bounding the first term in \eqref{eq:energy_estimates} is 1, and the bound on the remaining two terms depends only linearly on $T$.
\end{remark}

Now, we are ready to show the existence of a solution to the full problem \eqref{eq:weak_FPDE}.

\begin{theorem}[Existence of a weak solution]\label{thm:existence}
    Under the conditions of Theorems \ref{thm:approx_solns} and \ref{thm:energy_estimates}, there is a weak solution $\*u \in C([0,T],L^2)$ to the PDE problem \eqref{eq:weak_FPDE}.
\end{theorem}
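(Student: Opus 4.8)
The plan is to follow the standard Galerkin recipe, adapting the proof of Theorem~3 of Chapter~7 in \cite{evans2010}: extract a weakly convergent subsequence of the approximate solutions $\*u_m$ from \cref{thm:approx_solns}, pass to the limit in the finite-dimensional weak equations \eqref{eq:approx_eqn}, and then check that the limit inherits the initial condition and the claimed time-continuity. First I would invoke the energy estimates \eqref{eq:energy_estimates}: the sequences $\{\*u_m\}$ and $\{\*u_m^{(\alpha)}\}$ are bounded in the reflexive Hilbert spaces $L^2([0,T],\sobsp\beta)$ and $L^2([0,T],\sobsp{-\beta})$ respectively, so by Banach--Alaoglu there is a subsequence, not relabelled, with $\*u_m \rightharpoonup \*u$ weakly in $L^2([0,T],\sobsp\beta)$ and $\*u_m^{(\alpha)} \rightharpoonup \*v$ weakly in $L^2([0,T],\sobsp{-\beta})$; by \cref{prop:vprime_equals_u} the two limits match, i.e.\ $\*v = \*u^{(\alpha)}$.

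Next I would fix $k \in \Z^d \setminus \{0\}$ and $\phi \in C_c^\infty(0,T)$, multiply \eqref{eq:approx_eqn} (valid for all $m \geq |k|$) by $\phi(t)$, and integrate over $[0,T]$. The map $\*g \mapsto \int_0^T \inprod[{\sobsp{-\beta},\sobsp\beta}]{\*g(t)}{w_k}\phi(t)\d t$ is a bounded linear functional on $L^2([0,T],\sobsp{-\beta})$, and $\*g \mapsto \int_0^T B[\*g(t),w_k]\phi(t)\d t$ is a bounded linear functional on $L^2([0,T],\sobsp\beta)$ --- the latter because $|B[u,w_k]| \leq (K+M_1)\norm[H^\beta]{u}\norm[H^\beta]{w_k}$ uniformly in $t$, which follows from Cauchy--Schwarz together with \cref{prop:weak_bound_forcing_term} and \cref{prop:poincare} (here $\beta > \tfrac12$ is used). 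Passing to the weak limit and using $\*v = \*u^{(\alpha)}$ gives
\begin{equation*}
  \int_0^T \Bigl( \inprod[{\sobsp{-\beta},\sobsp\beta}]{\*u^{(\alpha)}(t)}{w_k} + B[\*u(t),w_k] - \inprod{\*f(t)}{w_k} \Bigr) \phi(t) \d t = 0
\end{equation*}
for all such $\phi$, hence the integrand vanishes for almost every $t$. Taking the countable union of the exceptional null sets over $k \in \Z^d \setminus \{0\}$, and then using linearity, the continuity of $v \mapsto \inprod[{\sobsp{-\beta},\sobsp\beta}]{\*u^{(\alpha)}(t)}{v}$, $v \mapsto B[\*u(t),v]$ and $v \mapsto \inprod{\*f(t)}{v}$ on $\sobsp\beta$, and the density of $\mspan\{w_k\}_{k \neq 0}$ in $\sobsp\beta$, the first line of \eqref{eq:weak_FPDE} follows for every $v \in \sobsp\beta$ and almost every $t$.

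For the initial condition, since $\*u_m(0) = \sum_{|k|\leq m} \inprod{h}{w_k} w_k$ we have $\inprod{\*u_m(0)}{w} = \inprod{h}{w}$ for all $w \in \mspan\{w_k\}_{|k|\leq m}$, so \cref{lemma:limit_ic} gives $\*u(0) = h$ in the sense of \eqref{eq:sense_ic}. Finally, $\*u \in L^2([0,T],\sobsp\beta)$ and $\*u^{(\alpha)} \in L^2([0,T],\sobsp{-\beta})$, so once $I^{1-\alpha}\norm{\*u(t)}^2$ is known to be absolutely continuous, \cref{thm:cavalho_neto_fehlberg} yields, after a reidentification, $\*u \in C([0,T],L^2)$, which completes the proof.

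I expect the main obstacles to be: (i) establishing the absolute continuity of $I^{1-\alpha}\norm{\*u(t)}^2$ needed to invoke \cref{thm:cavalho_neto_fehlberg} for the limit $\*u$ --- in the integer-order setting the corresponding time-continuity is automatic from $\*u \in L^2(\sobsp\beta)$, $\*u' \in L^2(\sobsp{-\beta})$, but here it must be argued, presumably by transferring the property of the $\*u_m$ (already used in the proof of \cref{thm:energy_estimates}) through the limit; and (ii) the bookkeeping in passing the bilinear form $B[\cdot,\cdot]$ to the limit, since $B$ carries the time-dependent potential $\*V$ and the drift term is a discrete convolution in Fourier space --- one must check carefully that the functionals involved are genuinely bounded and linear on $L^2([0,T],\sobsp\beta)$ with constants uniform in $t$, which is precisely what \cref{prop:weak_bound_forcing_term} supplies.
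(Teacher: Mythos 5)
Your proposal is correct and follows essentially the same route as the paper: weak compactness from the energy estimates, identification of the limit of $\*u_m^{(\alpha)}$ via \cref{prop:vprime_equals_u}, passage to the limit in \eqref{eq:approx_eqn} against (time-modulated) finite linear combinations of the $w_k$, recovery of the initial condition via \cref{lemma:limit_ic}, and continuity from \cref{thm:cavalho_neto_fehlberg}. Your localisation in time via $\phi\in C_c^\infty(0,T)$ with a countable union of null sets, and your explicit flagging of the unverified absolute-continuity hypothesis needed to invoke \cref{thm:cavalho_neto_fehlberg} for the limit $\*u$, are if anything slightly more careful than the paper's own write-up.
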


\begin{proof}
    \cref{thm:energy_estimates} shows that the sequences $\{\*u_m\}_{m=1}^\infty$ and $\{\*u_m^{(\alpha)}\}_{m=1}^\infty$ are bounded in \linebreak $L^2([0,T],\sobsp\beta)$ and $L^2([0,T],\sobsp{-\beta})$, respectively.
    Both spaces are Hilbert spaces, so by classical functional analysis, 
    there exists a subsequence $\{\*u_{m_l}\}_{l=1}^\infty \subset \{\*u_m\}_{m=1}^\infty$ such that
    \begin{align}
        \*u_{m_l} &\xrightharpoonup[]{} \*u\phantom{{}^{(\alpha)}} \qquad \text{ weakly in } L^2\left([0,T],\sobsp\beta\right) \label{eq:existence_weak_conv1} \\
        \*u_{m_l}^{(\alpha)} &\xrightharpoonup[]{} \*u^{(\alpha)} \qquad \text{ weakly in } L^2\left([0,T],\sobsp{-\beta}\right), \label{eq:existence_weak_conv2}
    \end{align}
    where $\*u \in L^2\left([0,T],\sobsp\beta\right)$ with $\*u^{(\alpha)} \in L^2\left([0,T],\sobsp{-\beta}\right)$.
    Note that $\*u_{m_l}^{(\alpha)}$ must converge to $\*u^{(\alpha)}$ and no other limit by \cref{prop:vprime_equals_u}.

    We have previously verified that $\*u(0) = h$  in \cref{lemma:limit_ic}.
    Furthermore, \cref{thm:cavalho_neto_fehlberg} establishes that $\*u$ is continuous in time in the sense that $\*u \in C([0,T],L^2)$.
    It therefore remains to show that $\*u$ and $\*u^{(\alpha)}$ satisfy the weak PDE \eqref{eq:weak_FPDE}.
    
    Consider an arbitrary function $\*v \in L^2([0,T],\sobsp\beta)$ of the form
    \begin{equation}\label{eq:existence_v_form}
        \*v(t) = \sum_{|k| \leq n} d^k(t) w_k.
    \end{equation}
    By taking appropriate linear combinations of \eqref{eq:approx_eqn} and integrating with respect to $t$, we have that for large enough $l$ such that $m_l \geq n$,
    \begin{equation}
        \int_0^T \inprod[{\sobsp{-\beta},\sobsp\beta}]{\*u_{m_l}^{(\alpha)}}{\*v} \d t + \int_0^T B[\*u_{m_l},\*v] \d t = \int_0^T \inprod{\*f}{\*v} \d t.
    \end{equation}
    The second term can be treated as a bounded linear functional acting on $\*u_{m_l}$, so we can pass to weak limits to obtain
    \begin{equation}\label{eq:existence_weak_eqn}
        \int_0^T \inprod[{\sobsp{-\beta},\sobsp\beta}]{\*u^{(\alpha)}}{\*v} \d t + \int_0^T B[\*u,\*v] \d t = \int_0^T \inprod{\*f}{\*v} \d t.
    \end{equation}
    Because functions $\*v$ of the form \eqref{eq:existence_v_form} are dense in $L^2([0,T],\sobsp\beta)$, \eqref{eq:existence_weak_eqn} holds for all $\*v \in L^2([0,T],\sobsp\beta)$.
    In particular, 
    \begin{equation}
        \inprod[{\sobsp{-\beta},\sobsp\beta}]{\*u^{(\alpha)}}{v} + B[\*u,v] = \inprod{\*f}{v}
    \end{equation}
    for any $v \in \sobsp\beta$ and almost every time $t \in [0,T]$, and hence we have established that $\*u$ is indeed the weak solution sought.
\end{proof}

\subsection{Uniqueness}
We now establish uniqueness; our approach is standard for linear evolutionary equations and can be compared directly with \cite[p381]{evans2010}.

\begin{theorem}[Uniqueness of the weak solution]\label{thm:uniqueness}
    There is a single weak solution to \eqref{eq:weak_FPDE}.
\end{theorem}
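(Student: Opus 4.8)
The plan is to use linearity: suppose $\*u_1$ and $\*u_2$ are two weak solutions to \eqref{eq:weak_FPDE} with the same data $h$ and $\*f$, and set $\*w := \*u_1 - \*u_2$. By subtracting the two weak formulations, $\*w \in L^2([0,T],\sobsp\beta)$ with $\*w^{(\alpha)} \in L^2([0,T],\sobsp{-\beta})$ satisfies the homogeneous problem
\begin{equation*}
    \inprod[{\sobsp{-\beta},\sobsp\beta}]{\*w^{(\alpha)}}{v} + B[\*w,v] = 0 \qquad \text{for almost every } t \in (0,T)
\end{equation*}
for all $v \in \sobsp\beta$, together with $\*w(0) = 0$ in the sense of \eqref{eq:sense_ic}. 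The goal is to show $\*w \equiv 0$.

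The key step is to test with $v = \*w(t)$, which is legitimate since $\*w(t) \in \sobsp\beta$ for almost every $t$. This gives $\inprod[{\sobsp{-\beta},\sobsp\beta}]{\*w^{(\alpha)}(t)}{\*w(t)} = -B[\*w(t),\*w(t)]$. To convert the left-hand side into a derivative of $\norm{\*w(t)}^2$, I would invoke \cref{thm:cavalho_neto_fehlberg} (after checking that $I^{1-\alpha}\norm{\*w(t)}^2$ is absolutely continuous, which follows from the regularity already established for solutions and the mapping properties of $I^{1-\alpha}$), obtaining
\begin{equation*}
    \deriv[\alpha]{}{t} \norm{\*w(t)}^2 \leq 2\inprod[{\sobsp{-\beta},\sobsp\beta}]{\*w^{(\alpha)}(t)}{\*w(t)} = -2B[\*w(t),\*w(t)].
\end{equation*}
Now applying the coercivity bound \eqref{eq:coercivity_bound} from \cref{thm:coercivity_bound}, $-2B[\*w,\*w] \leq -2\gamma_1 \norm[H^\beta]{\*w}^2 + 2\gamma_2\norm{\*w}^2 \leq 2\gamma_2 \norm{\*w}^2$, so that $\deriv[\alpha]{}{t}\norm{\*w(t)}^2 \leq 2\gamma_2 \norm{\*w(t)}^2$ for almost every $t$.

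With this fractional differential inequality in hand, I would apply the fractional Grönwall inequality, \cref{thm:gronwall}, with $\varphi(t) = \norm{\*w(t)}^2$, $c = 2\gamma_2$ and $\zeta \equiv 0$. Since $\*w(0) = 0$ implies $\varphi(0) = \norm{\*w(0)}^2 = 0$, the conclusion \eqref{eq:gronwall} gives $\norm{\*w(t)}^2 \leq (0 + 0) E_\alpha(2\gamma_2 t^\alpha) = 0$ for all $t \in [0,T]$, hence $\*w(t) = 0$ in $L^2$ for every $t$, i.e. $\*u_1 = \*u_2$. The main obstacle is the justification of the hypotheses of \cref{thm:cavalho_neto_fehlberg} and \cref{thm:gronwall} — specifically, verifying that $I^{1-\alpha}\norm{\*w(t)}^2$ is absolutely continuous so that the chain-rule-type inequality applies, and confirming that $\varphi(0)$ computed in the sense of \eqref{eq:sense_ic} genuinely vanishes given only $\*w(0) = 0$; both follow from the continuity $\*w \in C([0,T],L^2)$ guaranteed by \cref{thm:cavalho_neto_fehlberg} and the preservation-of-initial-condition argument in \cref{lemma:limit_ic}, but this bookkeeping is where care is needed.
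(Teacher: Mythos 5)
Your proposal is correct and follows essentially the same route as the paper's own proof: subtract the two solutions, test the homogeneous equation with the difference, apply \cref{thm:cavalho_neto_fehlberg} and the coercivity bound \eqref{eq:coercivity_bound} to obtain the fractional differential inequality, and conclude via \cref{thm:gronwall}. Your additional remarks on verifying the hypotheses of \cref{thm:cavalho_neto_fehlberg} and the sense of the vanishing initial condition are reasonable bookkeeping that the paper leaves implicit.
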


\begin{proof}
    Suppose that $\*u_1$ and $\*u_2$ are two weak solutions and define $\*u = \*u_1 - \*u_2$.
    Then $\*u$ satisfies \eqref{eq:weak_FPDE} with $h = 0$ and $\*f = 0$.
    In particular,
    \begin{equation*}
        \inprod[{\sobsp{-\beta},\sobsp\beta}]{\*u^{(\alpha)}}{\*u} + B[\*u,\*u] = 0
    \end{equation*}
    for almost every $t$.
    By \cref{thm:cavalho_neto_fehlberg} this implies
    \begin{equation*}
        \deriv[\alpha]{}{t} \norm{\*u}^2 + 2B[\*u,\*u] \leq 0.
    \end{equation*}
    Combining this with \eqref{eq:coercivity_bound} means that there exists $\gamma \geq 0$ such that
    \begin{equation*}
        \deriv[\alpha]{}{t} \norm{\*u}^2 \leq - 2 B[\*u,\*u] \leq \gamma \norm{\*u}^2.
    \end{equation*}
    Finally, we deduce from \cref{thm:gronwall} that $\norm{\*u}^2 \leq 0$, or in other words, 
    \begin{equation*}
        \*u = 0.\qedhere
    \end{equation*}
\end{proof}

\section{Regularity}\label{sec:regularity}

This section aims to show additional properties of the solution $\*u$ from \cref{thm:existence} that allow us to insert $\*u$ into the ``strong'' PDE.
The next three results will help in the proof of \cref{thm:regularity}.

In the section on numerical simulation of the equation studied, we will vary $\*V$. 
Thus, we need a bound that depends ``uniformly'' on $\*V$. 
This is the reason for introducing the constants $M_1$ before and $M_2$ in the following lemma.
\begin{lemma}\label{lemma:regularity_lemma1}
    Suppose $\beta > \frac12$ and $\*u\in \sobsp{2\beta}$ with $\*u^{(\alpha)} \in L^2$ satisfies
    \begin{equation*}
        \inprod{\*u^{(\alpha)}}{\flap\beta \*u} + B[\*u,\flap\beta \*u] = \inprod{\*f}{\flap\beta\*u}
    \end{equation*}
    for almost every $0 \leq t \leq T$.
    Assume $K > 0$, $\*f \in C([0,T],L^2)$ and $\Delta \*V \in {L^\infty([0,T],L^\infty)}$ and choose 
    \begin{equation*}
        M_2 \geq \max\left\{\norm[{L^\infty([0,T],L^\infty)}]{\Delta \*V}, \norm[{L^\infty([0,T],L^\infty)}]{\nabla \*V}\right\}.
    \end{equation*}
    Then there exists a constant $c(K,M_2)$ such that for almost every $t$
    \begin{equation}\label{eq:regularity_lemma1}
        \norm[H^{2\beta}]{\*u}^2 \leq c(K,M_2) \left(\norm{\*u^{(\alpha)}}^2 + \norm{\*u}^2 + \norm{\*f}^2\right).
    \end{equation}
\end{lemma}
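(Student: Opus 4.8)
The plan is to treat the hypothesised identity as the energy balance obtained by testing the PDE against $\flap\beta\*u$, and then to absorb every term on its right-hand side into the leading elliptic contribution $K\norm{\flap\beta\*u}^2$, after which all dependence on $\*V$ is collected into the constant $c(K,M_2)$. First I would expand the bilinear form. Its symmetric part contributes, by \cref{lemma:flap_div_thm} (or directly via \cref{prop:parseval}),
\[
  K\int_{\T^d}\flap{\beta/2}\*u\,\flap{\beta/2}\bigl(\flap\beta\*u\bigr)\d x
  = K\int_{\T^d}\flap{\beta/2}\*u\,\flap{3\beta/2}\*u\d x
  = K\norm{\flap\beta\*u}^2 ,
\]
which is finite precisely because $\*u\in\sobsp{2\beta}$. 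The drift part $\int_{\T^d}(\*u\nabla\*V)\cdot\nabla(\flap\beta\*u)\d x$ I would integrate by parts (the ordinary divergence theorem) to rewrite as $-\int_{\T^d}\bigl(\nabla\*u\cdot\nabla\*V+\*u\,\Delta\*V\bigr)\flap\beta\*u\d x$; since $2\beta>1$ forces $\nabla\*u\in L^2$ and the hypotheses put $\nabla\*V,\Delta\*V\in L^\infty$, both summands lie in $L^2$, so this is the form in which the term is meaningfully interpreted. Thus the hypothesised equation becomes
\[
  K\norm{\flap\beta\*u}^2
  = \inprod{\*f}{\flap\beta\*u} - \inprod{\*u^{(\alpha)}}{\flap\beta\*u}
    + \int_{\T^d}\bigl(\nabla\*u\cdot\nabla\*V+\*u\,\Delta\*V\bigr)\flap\beta\*u\d x .
\]

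Next I would estimate the four terms on the right. For $\inprod{\*f}{\flap\beta\*u}$ and $\inprod{\*u^{(\alpha)}}{\flap\beta\*u}$, Cauchy--Schwarz with the elementary inequality \eqref{eq:apx_Cauchy_Young_ineq} (small parameter $\eta$) gives $\eta\norm{\flap\beta\*u}^2+\tfrac1{4\eta}\norm{\*f}^2$ and $\eta\norm{\flap\beta\*u}^2+\tfrac1{4\eta}\norm{\*u^{(\alpha)}}^2$. For $\int\*u\,\Delta\*V\,\flap\beta\*u$ I estimate $|\cdot|\le M_2\norm{\*u}\norm{\flap\beta\*u}\le\eta\norm{\flap\beta\*u}^2+\tfrac{M_2^2}{4\eta}\norm{\*u}^2$. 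The only term needing extra care is $\int(\nabla\*u\cdot\nabla\*V)\flap\beta\*u$: here $|\cdot|\le M_2\norm{|\nabla\*u|}\norm{\flap\beta\*u}=M_2\norm{\flap{1/2}\*u}\norm{\flap\beta\*u}$ using \eqref{eq:H1_norms_are_equivalent}, and after one more application of \eqref{eq:apx_Cauchy_Young_ineq} the quantity $\norm{\flap{1/2}\*u}^2$ appears, to which I apply the Young-type inequality \cref{lemma:youngtype_ineq} with the triple of exponents $0<\tfrac12<\beta$ — \emph{this is exactly where the standing assumption $\beta>\tfrac12$ is used} — obtaining $\norm{\flap{1/2}\*u}^2\le\varepsilon\norm{\flap\beta\*u}^2+c(\varepsilon)\norm{\*u}^2$ for $\varepsilon$ as small as desired.

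Collecting all contributions, the coefficient of $\norm{\flap\beta\*u}^2$ on the right-hand side is $4\eta+\tfrac{M_2^2\varepsilon}{4\eta}$; choosing $\eta$ small and then $\varepsilon$ small in terms of $K$ and $M_2$ makes this at most $K/2$, so it can be moved to the left-hand side, leaving
\[
  \tfrac K2\norm{\flap\beta\*u}^2 \le c(K,M_2)\bigl(\norm{\*u^{(\alpha)}}^2+\norm{\*u}^2+\norm{\*f}^2\bigr) .
\]
Adding $\norm{\*u}^2$ to both sides and recalling that $\norm[H^{2\beta}]{\*u}^2=\norm{\flap\beta\*u}^2+\norm{\*u}^2$ then gives \eqref{eq:regularity_lemma1}.

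I expect the main obstacle to be the rigorous justification of the integration by parts in the drift term: \emph{a priori} $\flap\beta\*u$ lies only in $L^2$ and need not belong to $H^1$, so $\int(\*u\nabla\*V)\cdot\nabla(\flap\beta\*u)$ must be read as a duality pairing and the identity with $-\int\nabla\cdot(\*u\nabla\*V)\,\flap\beta\*u$ established first on trigonometric polynomials and then extended by density — which is harmless since all the regularity that enters ($\*u\in\sobsp{2\beta}$ with $2\beta>1$, and $\nabla\*V,\Delta\*V\in L^\infty$, together with whatever smoothness of $\*V$ is implicit in the statement) is available; alternatively, the whole computation can be carried out in Fourier variables, where this rearrangement is transparent. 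Everything else is a routine application of Cauchy--Schwarz, \eqref{eq:apx_Cauchy_Young_ineq} and \cref{lemma:youngtype_ineq}.
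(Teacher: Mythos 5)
Your proposal is correct and follows essentially the same route as the paper: rewrite $B[\*u,\flap\beta\*u]$ via the fractional and ordinary divergence theorems so that the equation reads $K\norm{\flap\beta\*u}^2=\inprod{\nabla\cdot(\nabla\*V\*u)-\*u^{(\alpha)}+\*f}{\flap\beta\*u}$, bound each piece by Cauchy--Schwarz and \eqref{eq:apx_Cauchy_Young_ineq}, control $\norm{\nabla\*u}=\norm{\flap{1/2}\*u}$ by \cref{lemma:youngtype_ineq} (the only place $\beta>\tfrac12$ enters), and absorb the small multiples of $\norm{\flap\beta\*u}^2$ into the left-hand side. Your closing remark about justifying the integration by parts is a fair point the paper passes over, but it is harmless in context since the lemma is applied to trigonometric polynomials $\*u_m$, for which every manipulation is elementary.
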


\begin{proof}
    Given $0\leq t \leq T$ such that the assumption holds, we have
    \begin{equation*}
        K \inprod{\flap\beta \*u}{\flap\beta \*u} = \inprod{\nabla \cdot (\nabla \*V \*u) - \*u^{(\alpha)} + \*f}{\flap\beta \*u}.
    \end{equation*}
    Thus, using \eqref{eq:apx_Cauchy_Young_ineq} for $\eta > 0$ yields
    \begin{align*}
        K &\norm{\flap\beta \*u}^2\leq \frac1{4\eta} \norm{\nabla \cdot (\nabla \*V \*u) -\*u^{(\alpha)} + \*f}^2 + \eta \norm{\flap\beta \*u}^2 \\
        &\leq \frac1{4\eta} \left(\norm{\*u^{(\alpha)}}^2 + \norm{\Delta \*V \*u}^2 + \norm{\nabla \*V \nabla \*u}^2 + \norm{\*f}^2\right) + \eta \norm{\flap\beta \*u}^2 \\
        &\leq \frac1{4\eta}\left(\norm{\*u^{(\alpha)}}^2 + \norm[L^\infty]{\Delta \*V}^2 \norm{\*u}^2 + \norm[L^\infty]{\nabla \*V}^2 \norm{\nabla \*u}^2 + \norm{\*f}^2\right) + \eta \norm{\flap\beta \*u}^2 \\
        &\leq \frac1{4\eta}\norm{\*u^{(\alpha)}}^2 + \frac{M_2^2}{4\eta} \left(\norm{\*u}^2 + \norm{\nabla \*u}^2\right) + \frac1{4\eta} \norm{\*f}^2 + \eta \norm{\flap\beta \*u}^2.
    \end{align*}
    Recalling $\norm{\nabla \*u} = \norm{\flap{1/2} \*u}$ from \eqref{eq:H1_norms_are_equivalent} and using \cref{lemma:youngtype_ineq} (note that $\beta > \frac12$) for $\frac{4 \eta^2}{M_2^2} > 0$ we can find an appropriate constant $c(M_2,\eta)$ such that
    \begin{equation*}
        K \norm{\flap\beta \*u}^2 \leq \frac1{4\eta} \norm{\*u^{(\alpha)}}^2 + c(M_2,\eta) \norm{\*u}^2 + \frac1{4\eta} \norm{\*f}^2 + 2 \eta \norm{\flap\beta \*u}^2.
    \end{equation*}
    Now choosing $\eta > 0$ sufficiently small implies the result.
\end{proof}

\begin{lemma}\label{lemma:regularity_lemma2}
    Given $\varepsilon > 0$, with the same assumptions as in \cref{lemma:regularity_lemma1}, there is a constant $c(K,M_2,\varepsilon)$ such that 
    \begin{equation}\label{eq:regularity_lemma2}
        \int_{\T^d} \left|\nabla \cdot (\nabla \*V \*u) \*u^{(\alpha)} \right| \d x \leq \varepsilon \left(\norm{\*u^{(\alpha)}}^2 + \norm{\*f}^2 \right) + c(K,M_2,\varepsilon) \norm{\*u}^2
    \end{equation}
    for almost every $t$.
\end{lemma}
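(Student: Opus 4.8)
The plan is to expand the divergence and estimate each resulting term against $\norm{\*u^{(\alpha)}}$, absorbing all derivatives of $\*u$ of order up to $2\beta$ into $\norm[H^{2\beta}]{\*u}^2$, which is then controlled via \cref{lemma:regularity_lemma1}. Concretely, I would first write $\nabla\cdot(\nabla\*V\*u) = \Delta\*V\,\*u + \nabla\*V\cdot\nabla\*u$, so that by the triangle and Cauchy--Schwarz inequalities
\begin{equation*}
    \int_{\T^d} \left|\nabla\cdot(\nabla\*V\*u)\,\*u^{(\alpha)}\right| \d x \leq \norm[L^\infty]{\Delta\*V}\,\norm{\*u}\,\norm{\*u^{(\alpha)}} + \norm[L^\infty]{\nabla\*V}\,\norm{\nabla\*u}\,\norm{\*u^{(\alpha)}} \leq M_2\big(\norm{\*u} + \norm{\nabla\*u}\big)\norm{\*u^{(\alpha)}}.
\end{equation*}
Next I would apply the Cauchy--Young inequality \eqref{eq:apx_Cauchy_Young_ineq} in the form $ab \leq \eta' a^2 + b^2/(4\eta')$ with $a = \norm{\*u^{(\alpha)}}$ and $b = M_2(\norm{\*u} + \norm{\nabla\*u})$, which gives, for any $\eta' > 0$,
\begin{equation*}
    \int_{\T^d}\left|\nabla\cdot(\nabla\*V\*u)\,\*u^{(\alpha)}\right|\d x \leq \eta'\norm{\*u^{(\alpha)}}^2 + \frac{M_2^2}{2\eta'}\left(\norm{\*u}^2 + \norm{\nabla\*u}^2\right),
\end{equation*}
using $(x+y)^2 \leq 2x^2 + 2y^2$.

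The remaining task is to handle the $\norm{\nabla\*u}^2 = \norm{\flap{1/2}\*u}^2$ term, which is of higher order than the $\norm{\*u}^2$ allowed on the right-hand side of \eqref{eq:regularity_lemma2}. Since $\beta > \tfrac12$, \cref{lemma:youngtype_ineq} applies with $\beta'' = 0 < \beta' = \tfrac12 < \beta$: for any $\delta > 0$ there is $c(\delta)$ with $\norm{\flap{1/2}\*u}^2 \leq \delta\norm{\flap{\beta}\*u}^2 + c(\delta)\norm{\*u}^2 \leq \delta\norm[H^{2\beta}]{\*u}^2 + c(\delta)\norm{\*u}^2$. Substituting and then invoking \cref{lemma:regularity_lemma1} to bound $\norm[H^{2\beta}]{\*u}^2 \leq c(K,M_2)\big(\norm{\*u^{(\alpha)}}^2 + \norm{\*u}^2 + \norm{\*f}^2\big)$ yields
\begin{equation*}
    \int_{\T^d}\left|\nabla\cdot(\nabla\*V\*u)\,\*u^{(\alpha)}\right|\d x \leq \left(\eta' + \frac{M_2^2\delta\,c(K,M_2)}{2\eta'}\right)\left(\norm{\*u^{(\alpha)}}^2 + \norm{\*f}^2\right) + C\,\norm{\*u}^2,
\end{equation*}
where $C$ collects the remaining $\norm{\*u}^2$-coefficients and depends on $K$, $M_2$, $\eta'$ and $\delta$.

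Finally, given the target $\varepsilon > 0$, I would first fix $\eta' = \varepsilon/2$, then choose $\delta$ small enough that $M_2^2\delta\,c(K,M_2)/(2\eta') \leq \varepsilon/2$; this pins down $c(\delta)$ and hence the constant $c(K,M_2,\varepsilon)$ multiplying $\norm{\*u}^2$, completing the estimate \eqref{eq:regularity_lemma2}. The only slightly delicate point — the ``main obstacle'' — is the order-of-differentiation bookkeeping: the naive bound produces a $\norm{\nabla\*u}^2$ term that is not directly controllable, and one must notice that $\beta > \tfrac12$ is exactly what makes $\tfrac12$ an interior exponent so that \cref{lemma:youngtype_ineq} can trade it for an arbitrarily small multiple of the top-order norm $\norm[H^{2\beta}]{\*u}^2$ plus a zeroth-order term; everything else is a routine chain of Cauchy--Schwarz and Cauchy--Young estimates with careful ordering of the constant choices so that no circularity arises when \cref{lemma:regularity_lemma1} is applied.
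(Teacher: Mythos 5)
Your proposal is correct and follows essentially the same route as the paper's own proof: expand the divergence, apply Cauchy--Young to isolate $\eta\norm{\*u^{(\alpha)}}^2$, trade the resulting $\norm{\flap{1/2}\*u}^2$ term via \cref{lemma:youngtype_ineq} for a small multiple of the top-order norm plus $\norm{\*u}^2$, invoke \cref{lemma:regularity_lemma1}, and tune the parameters. Your bookkeeping of the constants (the $M_2^2$ and the factor of $2$ from $(x+y)^2\leq 2x^2+2y^2$) is in fact slightly more careful than the paper's.
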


\begin{proof}
    For any $\eta > 0$, use \eqref{eq:apx_Cauchy_Young_ineq} to get
    \begin{align*}
        \int_{\T^d} \left|\nabla \cdot (\nabla \*V \*u) \*u^{(\alpha)} \right| \d x &\leq M_2 \int_{\T^d} \left(|\*u| + |\nabla \*u| \right) |\*u^{(\alpha)}| \d x \\
        &\leq \frac {M_2}{4\eta} \int_{\T^d} |\*u|^2 + |\nabla \*u|^2 \d x + \eta \int_{\T^d} |\*u^{(\alpha)}|^2 \d x \\
        &\leq \frac {M_2}{4\eta} \norm{\flap{1/2} \*u}^2 + \frac {M_2}{4\eta} \norm{\*u}^2 + \eta \norm{\*u^{(\alpha)}}^2.
    \end{align*}
    Now we can apply \cref{lemma:youngtype_ineq} with $\frac{4\eta^2}{c(K,M_2) M_2}$ to obtain
    \begin{equation*}
        \int_{\T^d} \left|\nabla \cdot (\nabla \*V \*u) \*u^{(\alpha)} \right| \d x \leq \frac\eta{c(K,M_2)} \norm{\flap\beta \*u}^2 + c(K,M_2,\eta) \norm{\*u }^2  + \eta \norm{\*u^{(\alpha)}}^2
    \end{equation*}
    for some constant $c(K,M_2,\eta)$.
    Finally, applying \cref{lemma:regularity_lemma1} and picking $\eta > 0$ small enough we conclude that \eqref{eq:regularity_lemma2} holds.
\end{proof}

We need one final ingredient to prove some extra regularity for our solution.

\begin{proposition}[Time Poincar\'e-type inequality for the fractional integral operator]\label{prop:integral_poincare}
    For $\alpha \in (0,1]$ and a  nonnegative function $g$, there exists a constant $c(T,\alpha)$ depending on $T$ and $\alpha$ such that
    \begin{equation}
        \int_0^t g(\tau) \d \tau \leq c(T, \alpha) I^\alpha g(t).
    \end{equation}
\end{proposition}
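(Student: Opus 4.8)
The plan is to exploit the explicit form of the fractional integral together with the sign of the exponent $\alpha-1$. Recall that
\begin{equation*}
    I^\alpha g(t) = \frac1{\Gamma(\alpha)} \int_0^t (t-\tau)^{\alpha-1} g(\tau) \d\tau .
\end{equation*}
Since we are only interested in $t\in[0,T]$ and $\tau\in[0,t]$, we have $0 \le t-\tau \le t \le T$. When $\alpha\in(0,1)$ the exponent $\alpha-1$ is negative, so $s\mapsto s^{\alpha-1}$ is decreasing on $(0,T]$, whence $(t-\tau)^{\alpha-1} \ge t^{\alpha-1} \ge T^{\alpha-1}$ for all admissible $\tau$ and $t$. (When $\alpha=1$ the kernel is identically $1$, and $\Gamma(1)=1$, so the argument degenerates to an equality.)

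Because $g\ge 0$, multiplying the pointwise kernel bound by $g(\tau)$ and integrating preserves the inequality, giving
\begin{equation*}
    I^\alpha g(t) \ge \frac{T^{\alpha-1}}{\Gamma(\alpha)} \int_0^t g(\tau) \d\tau ,
\end{equation*}
and rearranging yields the claim with $c(T,\alpha) = \Gamma(\alpha)\, T^{1-\alpha}$ (which reduces to $c=1$ at $\alpha=1$, consistent with $I^1 g = \int_0^\cdot g$). I would note in passing that this is exactly the constant that appears in the application of this proposition inside the proof of \cref{thm:energy_estimates}.

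There is no real obstacle here: the only point requiring a moment's care is tracking the direction of the inequality that comes from the exponent $\alpha-1$ being nonpositive, and checking that the boundary case $\alpha=1$ is covered (it is, trivially). One should also implicitly assume $g$ is integrable on $[0,T]$ so that all quantities are finite, which is the standing hypothesis whenever $I^\alpha g$ is written down.
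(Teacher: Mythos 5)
Your proof is correct and is essentially the same as the paper's: both arguments reduce to the pointwise kernel bound $(t-\tau)^{\alpha-1}\geq T^{\alpha-1}$ (the paper phrases it as $(t-\tau)^{1-\alpha}\leq t^{1-\alpha}\leq T^{1-\alpha}$ after inserting a factor of $1$, and labels it an application of H\"older's inequality with exponents $\infty$ and $1$), and both arrive at the same constant $c(T,\alpha)=\Gamma(\alpha)\,T^{1-\alpha}$. Nothing further is needed.
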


\begin{proof}
    Using H\"older's inequality, we compute
    \begin{align*}
        \int_0^t g(\tau) \d \tau &= \int_0^t (t-\tau)^{1-\alpha} (t-\tau)^{\alpha-1} g(\tau) \d \tau \\
        &\leq t^{1-\alpha} \int_0^t (t-\tau)^{\alpha-1} g(\tau) \d \tau \\
        &= \Gamma(\alpha) t^{1-\alpha} I^\alpha g(t) \leq \Gamma(\alpha) T^{1-\alpha} I^\alpha g(t).\qedhere
    \end{align*}
\end{proof}

We are now in position to prove higher regularity of the weak solution.

\begin{theorem}[Improved regularity of weak solutions]\label{thm:regularity}
    Suppose that $\beta > \frac12$; $K > 0$; $\*V \in C([0,T], H^1) \cap L^\infty([0,T],H^3)$; $\*f \in L^\infty([0,T],L^2)$ and $h\in \sobsp\beta$. Choose $M = \max \left\{M_1,M_2\right\}$. Assume that $\*u \in L^2([0,T],\sobsp\beta)$ with $\*u^{(\alpha)} \in L^2([0,T],\sobsp{-\beta})$ satisfies
    \begin{equation*}
        \inprod[\sobsp{-\beta},\sobsp\beta]{\*u^{(\alpha)}}v + B[\*u,v] = \inprod{\*f}{v}
    \end{equation*}
    for any $v \in \sobsp\beta$ and almost every $0\leq t \leq T$, with the initial condition $\*u(0) = h$.
    Then we have the estimate
    \begin{equation} \label{eq:regularity_thm}
        \norm[{L^2([0,T],\sobsp{2\beta})}] {\*u}^2 + \norm[{L^\infty([0,T],\sobsp\beta)}]{\*u} + \norm[{L^2([0,T],L^2)}]{\*u^{(\alpha)}} \leq C \left(\norm[H^\beta] h + \norm[{L^\infty([0,T],L^2)}]{\*f}\right)
    \end{equation}
    for some constant $C$ depending on $K$, $M$, $T$, $\alpha$ and $\beta$, 
    which immediately implies that, in fact,
    \begin{equation*}
        \*u \in L^2([0,T],\sobsp{2\beta}) \cap L^\infty([0,T],\sobsp\beta) \text{ and } \*u^{(\alpha)} \in L^2([0,T],L^2).
    \end{equation*}
\end{theorem}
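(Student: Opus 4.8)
The plan is to follow the standard Galerkin-improvement strategy from Chapter 7 of \cite{evans2010}, but testing against $\flap\beta \*u_m$ rather than against $\*u_m$ or $\*u_m^{(\alpha)}$, and using the fractional tools assembled above in place of the Leibniz rule and ordinary Gr\"onwall. First I would return to the finite-dimensional approximations $\*u_m$ from \cref{thm:approx_solns}. Since $\*u_m(t) \in \mspan(\{w_k\}_{|k|\le m}) \subset \sobsp{2\beta}$, the element $\flap\beta \*u_m(t)$ also lies in this span, so it is an admissible test function in \eqref{eq:approx_eqn}. Summing \eqref{eq:approx_eqn} against the coefficients of $\flap\beta \*u_m$ (equivalently, multiplying the $k$-th equation by $|k|^{2\beta}\overline{d_m^k(t)}$ and summing) gives
\begin{equation*}
    \inprod{\*u_m^{(\alpha)}}{\flap\beta \*u_m} + B[\*u_m, \flap\beta \*u_m] = \inprod{\*f}{\flap\beta \*u_m},
\end{equation*}
which is exactly the hypothesis of \cref{lemma:regularity_lemma1} with $\*u$ replaced by $\*u_m$. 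Note that because everything is a finite Fourier sum, $\*u_m^{(\alpha)} \in L^2$ genuinely holds, and $I^{1-\alpha}\norm{\*u_m}^2$ is absolutely continuous so that \cref{thm:cavalho_neto_fehlberg} applies.

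Next I would derive a differential inequality for $\norm{\flap{\beta/2}\*u_m}^2$. Expanding $B[\*u_m,\flap\beta\*u_m]$ and using \cref{lemma:flap_div_thm} to move half a fractional Laplacian across, the principal term becomes $K\norm{\flap\beta \*u_m}^2$, while the drift term is $\int_{\T^d}\nabla\cdot(\nabla\*V\*u_m)\,\flap\beta\*u_m\,\d x$ up to sign, which is controlled by \cref{lemma:regularity_lemma2}. Meanwhile the left-hand inner product $\inprod{\*u_m^{(\alpha)}}{\flap\beta\*u_m} = \inprod{\flap{\beta/2}\*u_m^{(\alpha)}}{\flap{\beta/2}\*u_m}$, and one wants to recognise this (via \cref{thm:cavalho_neto_fehlberg} applied to the function $t\mapsto\norm{\flap{\beta/2}\*u_m(t)}^2$) as bounding $\tfrac12\deriv[\alpha]{}{t}\norm{\flap{\beta/2}\*u_m}^2$ from above. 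This is the point to be careful: \cref{thm:cavalho_neto_fehlberg} is stated for $\norm{\*u(t)}^2$ with $\*u\in L^2([0,T],\sobsp\beta)$ and $\*u^{(\alpha)}\in L^2([0,T],\sobsp{-\beta})$; here I would apply it with $\*u \rightsquigarrow \flap{\beta/2}\*u_m$, whose fractional derivative is $\flap{\beta/2}\*u_m^{(\alpha)}$ — legitimate because $\flap{\beta/2}$ is a bounded linear operator $\sobsp\gamma\to\sobsp{\gamma-\beta}$ that commutes with $I^{1-\alpha}$ and with $\deriv{}{t}$ in the finite-dimensional setting. Combining these ingredients and using \cref{lemma:regularity_lemma1} to re-absorb $\norm[H^{2\beta}]{\*u_m}$ (equivalently $\norm{\flap\beta\*u_m}^2$ up to lower order) into the good term $K\norm{\flap\beta\*u_m}^2$, after choosing the free parameters small one reaches
\begin{equation*}
    \deriv[\alpha]{}{t}\norm{\flap{\beta/2}\*u_m}^2 + \tfrac{K}{2}\norm{\flap\beta\*u_m}^2 \leq c(K,M)\left(\norm{\*u_m}^2 + \norm{\*f}^2\right) \leq c'(K,M,T)\left(\norm{h}^2 + \norm[{L^\infty([0,T],L^2)}]{\*f}^2\right),
\end{equation*}
where on the last step I insert the energy estimate \eqref{eq:energy_estimates} from \cref{thm:energy_estimates} to bound $\norm{\*u_m}^2$. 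Applying \cref{thm:gronwall} (fractional Gr\"onwall) with $\varphi = \norm{\flap{\beta/2}\*u_m}^2$, $\zeta$ the constant right-hand side, and $\varphi(0) = \norm{\flap{\beta/2}\*u_m(0)}^2 \le \norm[H^\beta]{h}^2$ (using $\*u_m(0)$ is the $L^2$-projection of $h$, which does not increase the $H^\beta$-norm because $\{w_k\}$ is orthogonal in $H^\beta$ too), yields a uniform-in-$m$ bound on $\sup_{[0,T]}\norm{\flap{\beta/2}\*u_m(t)}^2$. Integrating the differential inequality via $I^\alpha$ and using \cref{prop:integral_poincare} turns the $\norm{\flap\beta\*u_m}^2$ term into a bound on $\norm[{L^2([0,T],\sobsp{2\beta})}]{\*u_m}^2$; feeding this back into \cref{lemma:regularity_lemma1} (rearranged, with $\*u^{(\alpha)}$ now the unknown) bounds $\norm[{L^2([0,T],L^2)}]{\*u_m^{(\alpha)}}$.

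Finally I would pass to the limit. The three bounds just obtained are uniform in $m$, so along a further subsequence $\*u_{m_l}\rightharpoonup \*u$ weakly in $L^2([0,T],\sobsp{2\beta})$, $\*u_{m_l}\overset{\ast}{\rightharpoonup}$ weakly-$\ast$ in $L^\infty([0,T],\sobsp\beta)$, and $\*u_{m_l}^{(\alpha)}\rightharpoonup \*u^{(\alpha)}$ weakly in $L^2([0,T],L^2)$ (the limits agree with the $\*u$ and $\*u^{(\alpha)}$ already identified in \cref{thm:existence}, and the identification of the weak limit of $\*u_{m_l}^{(\alpha)}$ as $\*u^{(\alpha)}$ follows as in \cref{prop:vprime_equals_u}, since $L^2([0,T],L^2)\hookrightarrow L^2([0,T],\sobsp{-\beta})$). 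Weak (and weak-$\ast$) lower semicontinuity of the norms then transfers all three bounds to $\*u$, giving \eqref{eq:regularity_thm} with $C$ depending only on $K$, $M$, $T$, $\alpha$, $\beta$; the claimed membership $\*u\in L^2([0,T],\sobsp{2\beta})\cap L^\infty([0,T],\sobsp\beta)$ and $\*u^{(\alpha)}\in L^2([0,T],L^2)$ is then immediate. The main obstacle I anticipate is the rigorous justification of the differential inequality for $\norm{\flap{\beta/2}\*u_m}^2$: one must check that $t\mapsto\flap{\beta/2}\*u_m(t)$ satisfies the hypotheses of \cref{thm:cavalho_neto_fehlberg} (in particular that $I^{1-\alpha}\norm{\flap{\beta/2}\*u_m(t)}^2$ is absolutely continuous, which follows from $\*u_m$ being a finite sum of $C([0,T])$ coefficients with $I^{1-\alpha}$-absolutely-continuous combinations), and that the several nested applications of \cref{lemma:youngtype_ineq} and the Cauchy–Young inequality \eqref{eq:apx_Cauchy_Young_ineq} can be arranged so that all $\norm{\flap\beta\*u_m}^2$-terms are absorbed with a strictly positive coefficient left over — this is where $\beta>\tfrac12$ is essential, exactly as flagged in \cref{lemma:regularity_lemma1}.
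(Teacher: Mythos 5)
Your argument is essentially correct, but it takes a genuinely different route from the paper. The paper tests \eqref{eq:approx_eqn} against $\*u_m^{(\alpha)}$ (coefficients $\deriv[\alpha]{}{t}d_m^k$), so the ``good'' term is $\norm{\*u_m^{(\alpha)}}^2$; the term $K\inprod{\flap{\beta/2}\*u_m}{\flap{\beta/2}\*u_m^{(\alpha)}}$ produces $\deriv[\alpha]{}{t}\norm{\flap{\beta/2}\*u_m}^2$ via \cref{thm:cavalho_neto_fehlberg} exactly as in your scheme, and the drift term is handled by \cref{lemma:regularity_lemma2} as stated. Integrating via $I^\alpha$ and \cref{prop:integral_poincare} then yields the $L^2([0,T],L^2)$ bound on $\*u_m^{(\alpha)}$ and the $L^\infty([0,T],\sobsp\beta)$ bound first, and only afterwards is \cref{lemma:regularity_lemma1} invoked as an elliptic-type estimate to convert these into the $L^2([0,T],\sobsp{2\beta})$ bound. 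You instead test against $\flap\beta\*u_m$, obtain $K\norm{\flap\beta\*u_m}^2$ as the good term, and recover the $\*u_m^{(\alpha)}$ bound at the end by rearranging the equation; since the equation is linear and the elliptic estimate lets one trade the two bounds for each other, both orderings deliver the same conclusion. The one thing to fix is your citation of \cref{lemma:regularity_lemma2}: that lemma bounds $\int_{\T^d}|\nabla\cdot(\nabla\*V\*u)\,\*u^{(\alpha)}|\d x$, i.e.\ the pairing of the drift with $\*u^{(\alpha)}$, not with $\flap\beta\*u_m$. For your route you need the analogous estimate
\begin{equation*}
    \int_{\T^d}\left|\nabla\cdot(\nabla\*V\*u_m)\,\flap\beta\*u_m\right|\d x \leq \eta\norm{\flap\beta\*u_m}^2 + c(M_2,\eta)\norm{\*u_m}^2,
\end{equation*}
which follows from the same Cauchy--Young and \cref{lemma:youngtype_ineq} interpolation used inside the proof of \cref{lemma:regularity_lemma1} (this is where $\beta>\tfrac12$ enters, as you note), so this is a misattribution rather than a gap. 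Likewise ``re-absorbing $\norm[H^{2\beta}]{\*u_m}$ via \cref{lemma:regularity_lemma1}'' is not quite what happens --- you only need to absorb the small multiples of $\norm{\flap\beta\*u_m}^2$ into the coercive term --- but the remaining steps (Bessel for $\norm{\flap{\beta/2}\*u_m(0)}\le\norm{\flap{\beta/2}h}$, fractional Gr\"onwall, weak and weak-$*$ lower semicontinuity in the limit, matching the paper's appeal to Problem~6 of Chapter~7 in \cite{evans2010}) are sound and line up with the paper's.
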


\begin{proof}
    For each $m$, take a linear combination of \eqref{eq:approx_eqn} with coefficients $\deriv[\alpha]{}t d_m^k(t)$ for each $|k| \leq m$
    \begin{equation*}
        \inprod{\*u_m^{(\alpha)}}{\*u_m^{(\alpha)}} + B[\*u_m, \*u_m^{(\alpha)}] = \inprod{\*f}{\*u_m^{(\alpha)}}.
    \end{equation*}
    Using \cref{thm:cavalho_neto_fehlberg}, \cref{lemma:regularity_lemma2} and \eqref{eq:apx_Cauchy_Young_ineq} we get the chain of inequalities
    \begin{align*}
        \norm{\*u_m^{(\alpha)}}^2 + \frac12\deriv[\alpha]{}{t} \norm{\flap{\beta/2} \*u_m}^2 &\leq \inprod{\*u_m^{(\alpha)}}{\*u_m^{(\alpha)}} + B[\*u_m, \*u_m^{(\alpha)}] + \int_{\T^d} \nabla \cdot (\nabla \*V \*u_m) \*u_m^{(\alpha)} \d x \\
        &\leq \inprod{\*f}{\*u_m^{(\alpha)}} + \varepsilon \left(\norm{\*u_m^{(\alpha)}}^2 + \norm{\*f}^2\right) + c(K,M_2,\varepsilon) \norm{\*u_m}^2 \\
        &\leq 2\varepsilon\norm{\*u_m^{(\alpha)}}^2 + \left(\varepsilon + \frac1{4\varepsilon}\right) \norm{\*f}^2 + c(K,M_2,\varepsilon) \norm{\*u_m}^2
    \end{align*}
    for any $\varepsilon>0$. To justify this, observe that if $\*u_m \in \mspan\left(\{w_k\}_{|k|\leq m}\right)$, then so is $\flap\beta \*u_m$, and the conditions stated in \cref{lemma:regularity_lemma1} are satisfied. Choosing $\varepsilon = \frac14$,
    \begin{align}
        \norm{\*u_m^{(\alpha)}}^2 + \deriv[\alpha]{}{t} \norm{\flap{\beta/2} \*u_m}^2 &\leq c(K,M_2) \norm{\*u_m}^2 + \frac54 \norm{\*f}^2 \notag\\
        &\leq c(K,M_2,T) \left(\norm{h}^2 + \norm[{L^\infty([0,T],L^2)}]{\*f}^2\right) \label{eq:regularity_proof}
    \end{align}
    for some constants $c(K,M_2)$ and $c(K,M_2,T)$, using from \cref{thm:energy_estimates} that \begin{equation*}
        \norm{\*u_m}^2 \leq C \left(\norm{h}^2 +\norm[{L^\infty([0,T],L^2)}]{\*f}^2\right).
    \end{equation*}

    Next, apply $I^\alpha$ to both sides,observing once more that this preserves inequalities, and use \cref{prop:integral_poincare} to estimate the first term. Then
    \begin{equation*}
        \int_0^t \norm{\*u_m^{(\alpha)}}^2 \d t + \norm{\flap{\beta/2} \*u_m(t)}^2 - \norm{\flap{\beta/2} \*u_m(0)}^2 \leq \tilde c(K,M,T) \left(\norm{h}^2 + \norm[{L^\infty([0,T],L^2)}]{\*f}^2\right)
    \end{equation*}
    for almost every $t$.
    By Bessel's inequality, $\norm{\flap{\beta/2}\*u_m(0)}^2 \leq \norm{\flap{\beta/2}h}^2$. Furthermore, \eqref{eq:regularity_lemma1} provides a bound of $\norm[H^{2\beta}]{\*u_m}$ in terms of $\norm{\*u_m^{(\alpha)}}$. Hence, implementing this bound and taking the (essential) supremum over $0 \leq t \leq T$ of the left-hand side we deduce
    \begin{equation*}
        \norm[{L^2([0,T],\sobsp{2\beta})}]{\*u_m}^2 + \norm[{L^2([0,T],L^2)}]{\*u_m^{(\alpha)}}^2 + \norm[{L^\infty([0,T],\sobsp\beta)}]{\*u_m}^2 \leq \tilde C \left(\norm[H^\beta]{h}^2 + \norm[{L^\infty([0,T],L^2)}]{\*f}^2\right).
    \end{equation*}
    The first two terms are natural norms on Hilbert spaces, so by standard functional analysis, we can pass to weak limits and preserve the inequality.
    For the final term, setting $m = m_l$ and letting $l\to \infty$ preserves the inequality by Problem 6 of Chapter 7 in \cite{evans2010}.
    Thus, we have shown \eqref{eq:regularity_thm} and the proof is complete.
\end{proof}

\begin{remark}
    We note that we could certainly obtain yet further improved regularity result under stronger assumptions on the initial condition, source term and potential, but we do not pursue this further here. Our main motivation for establishing this level of regularity is that it allows us to establish estimates on an appropriate numerical scheme discussed below.
\end{remark}

\section{Numerical scheme}\label{sec:numerics}

We can use the estimates established in \cref{thm:regularity} to demonstrate the convergence of numerical approximate solutions with a natural scheme in which we uniformly truncate the Fourier coefficients of all functions involved in the equation.

Consider as before the equation 
\begin{equation*}
    \pd[\alpha]{}{t} u(x,t) + K \flap{\beta} u(x,t) - \nabla \cdot (\nabla V u) = f
\end{equation*}
subject to initial conditions $u(\cdot,0) = h \in \sobsp\beta$, a given potential $V(\cdot,t) \in H^3 \subset C^3$ and a source term $f(\cdot,t) \in L^2$ for each $t$ such that $\nabla V$ and $f$ are continuous in time.

Fix a positive integer $m$. 
For a numerical scheme, we want to work in the finite-dimensional subspace $\mspan (\{w_k\}_{|k|\leq m})$.
Thus, define
\begin{equation*}
    V_m(x) = \sum_{|k| \leq m} \ft V(k) w_k(x)
\end{equation*}
to be the potential obtained by truncating the Fourier series of $V$.
For a fixed $m$, it is now possible to compute the solution $\tilde u_m(x,t)$ to the system of equations
\begin{equation}\label{eq:numerics_trunc_eq}
    \inprod{\pd[\alpha]{}{t}\tilde u_m}{w_k} + K \inprod{\flap{-\beta} \tilde u_m}{w_k} - \inprod{\nabla \cdot (\nabla V_m \tilde u_m)}{w_k } = \inprod{f}{w_k}
\end{equation}
with initial conditions $\inprod{\tilde u_m(\cdot,0)}{w_k} = \inprod{h}{w_k}$ for $|k| \leq m$.
Note that truncating the Fourier series of $f$ in the same fashion as we truncate the Fourier series of $V$ is always possible since this does not affect the equation. In this section, we will show that
\begin{equation*}
    \tilde u_m \to u \quad \text{ as } m \to \infty
\end{equation*}
uniformly in time (on an interval $[0,T]$) and $L^2$ in space, where $u$ is the full (weak) solution. Recall that the approximate solutions $u_m$ defined by $u_m(x,t) := \*u_m(t)(x)$ obtained in \cref{thm:approx_solns} solve the system
\begin{equation}\label{eq:numerics_full_eq}
    \inprod{\pd[\alpha]{}{t}u_m}{w_k} + K \inprod{\flap{-\beta} u_m}{w_k} - \inprod{\nabla \cdot (\nabla V u_m)}{w_k} = \inprod{f}{w_k}
\end{equation}
for $|k| \leq m$, subject to the same initial conditions.

First, we want to justify that truncating the Fourier series of $V$ and finding solutions to \eqref{eq:numerics_trunc_eq} instead of \eqref{eq:numerics_full_eq} still approximates the original weak equation.
Subtracting \eqref{eq:numerics_trunc_eq} from \eqref{eq:numerics_full_eq} yields
\begin{equation}\label{eq:numerics1}
    \inprod{\pd[\alpha]{}{t}\tilde U_m}{w_k} + B[\tilde U_m,w_k] + \inprod{\nabla (V-V_m) \tilde u_m}{\nabla w_k} = 0,
\end{equation}
where $\tilde U_m = u_m - \tilde u_m$ is the residual.
Now we can proceed similarly to the first few steps of the proof of \cref{thm:energy_estimates}. Taking a linear combination over $w_k$ in \eqref{eq:numerics1} using appropriate Fourier coefficients, we obtain
\begin{equation*}
    \inprod{\pd[\alpha]{}{t}\tilde U_m}{\tilde U_m} + B[\tilde U_m,\tilde U_m] + \inprod{\nabla (V-V_m) \tilde u_m}{\nabla \tilde U_m} = 0.
\end{equation*}
From \cref{prop:weak_bound_forcing_term}, we have that for fixed $t$,
\begin{equation}
    - \inprod{\nabla (V-V_m) \tilde u_m}{\nabla \tilde U_m} \leq c(V-V_m) \norm{\flap{1/4} \tilde u_m} \norm{\flap{1/4}\tilde U_m}.
\end{equation}
Now notice that $\tilde u_m$ and $u_m \in L^\infty([0,T],\sobsp\beta)$, but the potential $V_m$ in \eqref{eq:numerics_trunc_eq} depends on $m$. However, because $V(\cdot,t) \in H^3$ we have that
\begin{equation*}
    \norm[l^1]{\longft{\flap{3/4} (V-V_m)}} \to 0 \text{ as } m \to \infty.
\end{equation*}
In particular, this means we can choose the bounds $M_1$ and $M_2$ for $V_m$ and $V$ in \cref{thm:regularity} uniformly in $m$ and $t$. 
Furthermore, we also directly obtain that $c(V-V_m) \to 0$ uniformly in $t$ as $m\to \infty$.
Therefore, there exists a nonnegative sequence $\varepsilon_m \to 0$ as $m\to \infty$ such that
\begin{equation}\label{eq:numerics_diff_ineq}
    \inprod{\pd[\alpha]{}{t}\tilde U_m}{\tilde U_m} + B[\tilde U_m,\tilde U_m] \leq \varepsilon_m.
\end{equation}
Using \cref{thm:cavalho_neto_fehlberg}, 
\begin{equation*}
    \frac12 \deriv[\alpha]{}t \left(\norm{\tilde U_m}^2\right) \leq \inprod{\tilde U_m^{(\alpha)}}{\tilde U_m} \leq - B[\tilde U_m,\tilde U_m] + \varepsilon_m \leq \gamma_2 \norm{\tilde U_m}^2 + \varepsilon_m.
\end{equation*}
Now we can use \hyperref[thm:gronwall]{Gr\"onwall's inequality} to estimate
\begin{equation*}
    \norm{\tilde U_m}^2 \leq \left( I^\alpha(2\varepsilon_m) + \norm{\tilde U_m(0)}^2\right) E_\alpha(2 \gamma_2 t^\alpha).
\end{equation*}
Noting that $I^\alpha(2\varepsilon_m) = \frac{2\varepsilon_m}{\Gamma(\alpha+1)} t^\alpha$ and $\norm{\tilde U_m(0)}^2 = 0$, we conclude that there is a constant $\tilde C$ independent of $m$ such that
\begin{equation*}
    \norm{\tilde U_m}^2 \leq \tilde C \varepsilon_m.
\end{equation*}
Thus, 
\begin{equation}\label{eq:numerics_justification}
    \norm{u_m - \tilde u_m} \to 0 \quad \text{ uniformly in $t$ as } m\to \infty,
\end{equation}
which justifies that we may truncate the potential $V$ (and the source term $f$) in a numerical scheme.

Secondly, let us return to the sequence $\*u_m$ constructed in \cref{thm:approx_solns}.
So far we know that a subsequence of $\*u_m$ converges weakly in the appropriate Bochner space to $\*u$.
Define the residual $\*U_m = \*u - \*u_m$, which satisfies
\begin{equation*}
    \inprod{\*U_m^{(\alpha)}}{v} + B[\*U_m, v] = \inprod{\*f-\*f_m}{v}
\end{equation*}
for all $v \in \sobsp\beta$, subject to the initial condition
\begin{equation*}
    \*U_m(0) = h-h_m.
\end{equation*}
Here, $\*f_m$ and $h_m$ are the projections of $\*f$ and $h$ onto the first $m$ Fourier modes, respectively.
Thus, by \cref{thm:regularity}, we have that 
\begin{equation*}
    \esssup_{0\leq t \leq T} \norm[H^\beta]{\*U_m(t)} \leq C \left( \norm[H^\beta]{h-h_m} + \norm[{L^\infty([0,T],\sobsp\beta)}] { \*f- \*f_m}\right).
\end{equation*}
Hence, for sufficiently regular initial condition and source term, we obtain that 
\begin{equation*}
    \norm[H^\beta]{\*u(t) - \*u_m(t)} \to 0 \quad \text { as } m \to \infty
\end{equation*}
uniformly for almost every $t \in [0,T]$.
Then, combined with \eqref{eq:numerics_justification}, we finally have that the numerical solutions computed in \eqref{eq:numerics_trunc_eq} converge uniformly in time and $L^2$ in space to the true weak solution $\*u$. In other words,
\begin{equation*}
    \tilde u_m(\cdot, t) \overset{L^2}{\to} \*u(t) \quad \text{ as } m \to \infty
\end{equation*}
uniformly in time as long as $T$ is finite.

\section{Conclusion}
The main contribution of this work is to show existence, uniqueness and Sobolev regularity of a weak solution to the fractional differential equation of parabolic type
\begin{equation*}
    \pd[\alpha]{u}{t} - K \flap{\beta} u - \nabla \cdot(\nabla V u) = f
\end{equation*}
on a $d$-dimensional torus $\T^d$ and a time interval $[0,T]$.
Intuitively, the term containing the fractional Laplacian models a diffusive process of a particle over time according to a $2\beta$-stable L\'evy-process (see \cref{fig:Levy_process}).
The Caputo fractional derivative with respect to time encapsulates the fact that the particle's movement at every time also depends on previous states.
The term containing the potential $V$ pushes the particle towards the minima of $V$.
Finally, the source term $f$ adds sources and sinks to the equation.

More precisely, we showed that if $\beta > \frac12$, then there is a unique $u: T^d \times [0,T] \to \C$ such that
\begin{equation*}\begin{aligned}
    \Bigl\langle\pd[\alpha]{u}{t}, v \Bigr\rangle + K \inprod[]{ \flap{\beta/2} u}{ \flap{\beta/2} v} + \inprod[] {\left(u \nabla V \right)} { \nabla v } &= \inprod[]{f}{v}  &&\text{ on } \T^d \text{ for a.e. } t\in (0,T), \\
    u &= h  &&\text{ on } \T^d \text{ for } t = 0
\end{aligned}\end{equation*}
for any $v \in \sobsp\beta$. 
Here, $K>0$ is a constant, $h : \T^d \to \C$ is the initial condition, and $V, f: \T^d \times [0,T] \to \C$ are given functions that are sufficiently regular (precise conditions can be found in Theorems \ref{thm:existence} and \ref{thm:regularity}).

Our existence proof uses Galerkin approximation in finite-dimensional subspaces of $L^2(\C)$ spanned by elements of the orthonormal basis $w_k(x) := \frac1{(2\pi)^{d/2}} e^{ik\cdot x}$.
In particular, it is easier to numerically find a solution $\tilde u_m (x,t)$ for fixed $m$ to the set of $(2m+1)^d$ ordinary fractional differential equations (one for each $|k| \leq m$) given by
\begin{equation} \begin{aligned} \label{eq:numerics_trunc_eq_conclusion}
    \inprod{\pd[\alpha]{}{t}\tilde u_m}{w_k} + K \inprod{\flap{-\beta} \tilde u_m}{w_k} - \inprod{\nabla \cdot (\nabla V_m \tilde u_m)}{w_k } &= \inprod{f}{w_k} \\
    \inprod{\tilde u_m(\cdot,0)}{w_k} &= \inprod{h}{w_k}
\end{aligned}.\end{equation}
In \cref{sec:numerics} we showed that such a solution $\tilde u_m$ to \eqref{eq:numerics_trunc_eq_conclusion} is a good approximation to the true solution $u$ in the sense that
\begin{equation*}
    \tilde u_m(\cdot, t) \overset{L^2}{\to} u(\cdot, t) \quad \text{ as } m \to \infty
\end{equation*}
uniformly on $[0,T]$.

The system \eqref{eq:numerics_trunc_eq_conclusion} boils down to a coupled ``$\alpha$th-order'' system of ODEs that can be solved by computing the Mittag-Leffler function of the matrix corresponding to the ODE system.
This algorithm was used to produce the plot in \cref{fig:3d_sol}.

It is interesting to see that in \cref{fig:final_time_sol} the numerical result for $\beta = 0.4 < \frac12$ looks perfectly reasonable.
In fact, the numerical scheme breaks only for $\alpha$ or $\beta$ close to 0, suggesting that $\beta > \frac12$ is not a necessary condition.
However, note that if $\beta < \frac12$ the potential term is now the term with the highest-order derivative, and the equation is no longer parabolic, which critically allowed us to use \cref{prop:weak_bound_forcing_term}. 

Requiring the spatial domain to be a torus in order to work with Fourier series is one of the most significant restrictions in this work. In the future, it would be interesting to consider the equation on other spatial domains, but such results involve subtleties due to the need to interpret boundary conditions nonlocally.

The regularity result proved in \cref{thm:regularity} is relatively weak, but it is to be expected that one could probably improve upon the solution's regularity under possible stronger assumptions on the initial condition, source term and potential. Finally, one can further investigate fractional diffusion equations where the term involving the potential takes on a more general form.

\subsection*{Acknowledgements}
The authors would like to acknowledge the support of the University of Warwick's Undergraduate Research Support Scheme, which provided financial support for MR to work on this project.

\subsection*{Authors' statements}
The authors declare that there is no conflict of interest arising from the publication of this work. For the purpose of open access, the authors have applied a Creative Commons Attribution (CC-BY) licence to any Author Accepted Manuscript version arising from this work.

\bibliography{references.bib}

\end{document}